\documentclass[11pt,letterpaper]{article}
\usepackage[margin=1in]{geometry}
\usepackage{amsmath, amssymb, amsthm}
\usepackage{mathtools}
\usepackage{hyperref}
\hypersetup{colorlinks=true, linkcolor=blue, urlcolor=blue, citecolor=blue,
  pdftitle={The Cone Projection f(z)=z/(1+|z|/R): Geometric structure and the Self-Directrix Theorem},
  pdfauthor={George M. Georgiou}}
\usepackage{enumitem}
\usepackage{tikz}
\usetikzlibrary{calc, arrows.meta, decorations.pathreplacing, positioning}
\usepackage{caption}
\usepackage{placeins}
\usepackage{tabularx}
\newcolumntype{L}{>{\raggedright\arraybackslash}X}

\numberwithin{equation}{section}

\theoremstyle{plain}
\newtheorem{theorem}{Theorem}[section]
\newtheorem{proposition}[theorem]{Proposition}
\newtheorem{corollary}[theorem]{Corollary}
\newtheorem{lemma}[theorem]{Lemma}
\theoremstyle{definition}
\newtheorem{definition}[theorem]{Definition}
\theoremstyle{remark}
\newtheorem*{remark}{Remark}

\title{The Cone Projection $f(z)=\dfrac{z}{1+|z|/R}$ \\[2pt]
\large Geometric structure and the Self-Directrix Theorem}
\author{\small George M. Georgiou\thanks{\small School of Computer Science and Engineering, California State University, San Bernardino}\\[3pt]
\small \href{mailto:georgiou@csusb.edu}{georgiou@csusb.edu}}
\date{}

\begin{document}
\maketitle

\begin{abstract}
\noindent
The cone projection $f_R(z)=z/(1+|z|/R)$ is a radial homeomorphism from $\mathbb{C}$ onto the open disk $D_R$ of radius $R$, obtained by an elementary cone-and-perpendicular construction (independent of the cone's height) and governed by the reciprocal lens identity $1/|f_R(z)|=1/|z|+1/R$. Its main Euclidean feature is the \emph{Self-Directrix Theorem}: every line $\ell$ not through the origin maps to the focus-side arc of the conic with focus $O$, directrix $\ell$ \emph{itself}, eccentricity $R/d$, and semi-latus rectum $R$, so the single distance $d=\operatorname{dist}(O,\ell)$ fixes the ellipse/parabola/hyperbola trichotomy. The \emph{Confocal--Codirectrix Theorem} extends this from lines to every focal polar locus of a fixed focus--directrix pencil, keeping the focus and directrix while lowering the eccentricity by $1/e\mapsto1/e+\delta/R$; the image of a circle, by contrast, is generally a circular quartic rather than a conic. The same lens identity organizes the remaining structure: a curvature-additive composition law and its flow, a raywise cross-ratio structure, and higher-dimensional, metric, and axiomatic results.
\end{abstract}

\paragraph{Main contribution.}
The radial formula and its one-dimensional M\"obius algebra are elementary and serve here as organizing structure. The main geometric contribution is the Self-Directrix Theorem, in which each input line reappears as the directrix of its image conic, together with its generalization, the Confocal--Codirectrix Theorem, which extends this focus- and directrix-fixing behavior to every focal polar locus in a fixed focus--directrix pencil. Here ``confocal'' means sharing the distinguished focus $O$, not both foci in the classical two-focus sense (see Section~\ref{sec:confocal}). The remaining sections place these results in an algebraic, projective, dynamical, and metric framework.

\paragraph{Prior appearance in neural networks.}
The same radial map $f(z)=z/(1+|z|/R)$ has appeared earlier, in a different context, as a complex-valued neuron activation function. It was introduced by Georgiou~\cite{georgiou1992diss} and by Georgiou and Koutsougeras~\cite{georgiou1992tcs}, and is discussed in this role, for example, in the textbook chapter by Adali and Li~\cite[Eq.~(1.45)]{adali2010}, as a bounded, ray-preserving activation function for complex-domain backpropagation, where it plays the role of a complex analogue of the real sigmoid: the modulus $|z|$ is squashed into the open disk of radius $R$ while the phase $\arg z$ is preserved. The dissertation~\cite{georgiou1992diss} records the same cone-and-perpendicular geometric interpretation that organizes the present paper, and establishes the magnitude formula $|f(z)|=|z|/(1+|z|/R)$ from a similar-triangles argument in the cone's axial cross-section. The present paper proceeds from this construction in a purely geometric direction (composition and curvature addition, the partial group and its flow, the cross-ratio and harmonic-conjugate structure on rays, the Self-Directrix Theorem, the higher-dimensional extension, and the flat pullback metric), which is independent of the neural-network setting in which the function was originally proposed.

\paragraph{Notation.}
Throughout, $R>0$, $D_R=\{z\in\mathbb{C}:|z|<R\}$, and
\[
  f_R(z)=\frac{z}{1+|z|/R}.
\]
When polar coordinates are used, we write $z=\rho e^{i\theta}$ with $\rho=|z|$. The point $O$ denotes the origin.

\section{The construction and the height-independence theorem}\label{sec:setup}

Fix $R,h>0$. Let $\mathcal{K}_{R,h}\subset\mathbb{R}^3$ be the cone with apex at the origin $O=(0,0,0)$, axis along the third coordinate, height $h$, and circular base of radius $R$ centered at $(0,0,h)$. Identify the complex plane $\mathbb{C}$ with the horizontal plane $\{(x,y,0):x,y\in\mathbb{R}\}$; when no confusion can arise, the same symbol $z$ denotes both a complex number and the corresponding point $(\operatorname{Re}z,\operatorname{Im}z,0)$.

\paragraph{The construction.}
Set $f(0):=0$. (For $z=0$ the construction below degenerates: the segment $L_0$ runs along the cone axis and meets the lateral surface only at the apex $O$, whose foot is again $O$; so this convention agrees with the construction and makes $f$ continuous at the origin.) For $z\in\mathbb{C}\setminus\{0\}$:
\begin{enumerate}[label=(\arabic*)]
  \item Draw the line segment $L_z$ from $z$ to $(0,0,h)$, the center of the base of the cone.
  \item Let $P\in L_z$ be the (unique) point at which $L_z$ meets the lateral surface of $\mathcal{K}_{R,h}$.
  \item Drop the perpendicular from $P$ onto $\mathbb{C}$ and call its foot $f(z)\in\mathbb{C}$.
\end{enumerate}
(The point $P$ in step~(2) is unique: in the axial cross-section through $z$, the lateral surface meets the half-plane on the side of $z$ in a single edge segment from $O$ to the rim, and $L_z$ crosses the line of that edge exactly once, since its endpoints $z$ and $(0,0,h)$ lie on opposite sides of that line; the explicit computation is in the proof of Proposition~\ref{prop:p10}.) The three-dimensional picture is shown in Figure~\ref{fig:3d}; the side view, in which $z$, $f(z)$, $P$, $O$ and the cone axis are coplanar, is shown in Figure~\ref{fig:side}.

\begin{figure}[htbp]
\centering
\begin{tikzpicture}[
  x={(1cm, 0cm)},
  y={(0.42cm, 0.32cm)},
  z={(0cm, 1cm)},
  scale=1.0,
  >={Latex[length=2mm]},
  line cap=round, line join=round, font=\small
]
  \def\h{4.0}\def\r{1.5}\def\Zx{4.5}\def\Zy{-0.5}
  \pgfmathsetmacro{\zmag}{sqrt(\Zx*\Zx + \Zy*\Zy)}
  \pgfmathsetmacro{\tt}{\zmag/(\zmag + \r)}
  \pgfmathsetmacro{\Px}{\Zx*(1-\tt)}
  \pgfmathsetmacro{\Py}{\Zy*(1-\tt)}
  \pgfmathsetmacro{\Pz}{\h*\tt}

  \def\xmin{-2.6}\def\xmax{5.5}\def\ymin{-2.0}\def\ymax{2.5}
  \draw[thin] (\xmin,\ymin,0) -- (\xmax,\ymin,0)
              -- (\xmax,\ymax,0) -- (\xmin,\ymax,0) -- cycle;

  \draw[dashed, thin] plot[domain=0:360, samples=72, smooth cycle]
        ({\r*cos(\x)},{\r*sin(\x)},0);

  \draw[thick] plot[domain=0:360, samples=72, smooth cycle]
        ({\r*cos(\x)},{\r*sin(\x)},\h);

  \draw[thick] (0,0,0) -- (-\r,0,\h);
  \draw[thick] (0,0,0) -- ( \r,0,\h);

  \draw[thin]  (0,0,0) -- (0,0,\h);

  \draw[->, thin] (0,0,\h) -- (\r,0,\h);

  \draw[thin] (0,0,0) -- (\Zx,\Zy,0);
  \draw[very thick] (0,0,\h) -- (\Zx,\Zy,0);

  \draw[thin] (\Px,\Py,\Pz) -- (\Px,\Py,0);

  \fill (0,0,0)         circle (1.4pt);
  \fill (0,0,\h)        circle (1.4pt);
  \fill (\Zx,\Zy,0)     circle (1.4pt);
  \fill (\Px,\Py,\Pz)   circle (1.4pt);
  \fill (\Px,\Py,0)     circle (1.4pt);

  \node[below left, yshift=6pt]    at (0,0,0)            {$(0,0,0)$};
  \node[above left, yshift=-9pt]    at (0,0,\h)           {$(0,0,h)$};
  \node[above]                      at ({0.5*\r},0,\h)    {$R$};
  \node[below right]                at (\Zx,\Zy,0)        {$z$};
  \node[above right]                at (\Px,\Py,\Pz)      {$P$};
  \node[below right]                at (\Px,\Py,0)        {$f(z)$};
  \node                             at (4.15,1.8,0)       {\textsc{complex plane}};
\end{tikzpicture}
\caption{Geometric interpretation of $f(z)$. The cone $\mathcal{K}_{R,h}$ has its apex on the complex plane at the origin and opens upward to its base of radius $R$ centered at $(0,0,h)$. The line from $z$ to $(0,0,h)$ meets the lateral surface at $P$; the orthogonal projection of $P$ onto $\mathbb{C}$ is $f(z)$. The dashed circle in the plane is the projection of the cone's rim and forms the boundary of
\(D_R=\{z\in\mathbb C:\ |z|<R\}\).}
\label{fig:3d}
\end{figure}

\begin{figure}[htbp]
\centering
\begin{tikzpicture}[
  scale=1.05,
  >={Latex[length=2.2mm]},
  line cap=round, line join=round, font=\small
]
  \def\h{4.0}\def\r{1.5}\def\Zx{6.0}
  \pgfmathsetmacro{\tt}{\Zx/(\Zx+\r)}
  \pgfmathsetmacro{\Px}{\r*\tt}
  \pgfmathsetmacro{\Py}{\h*\tt}
  \pgfmathsetmacro{\rt}{1.0*\r}
  \draw[thin] (-\r,\h) -- (0,0);
  \draw[thin] (-\r,\h) -- (\r,\h);
  \draw[thin] (0,0)  -- (0,\h);
  \draw[very thick] (0,0) -- (\r,\h);
  \draw[very thick] (0,0) -- (\Zx,0);
  \draw[very thick] (\Zx,0) -- (0,\h);
  \draw[thick] (\Px,\Py) -- (\Px,0);
  \draw[thin]  (0,\Py)   -- (\Px,\Py);
  \draw[->, thin] (0,\h+0.3) -- (\rt,\h+0.3);
  \node[above] at ({0.5*\r},\h+0.3) {$R$};
  \node[left]  at (0,\h/2)         {$h$};
  \node[right] at (\Px,{0.5*\Py})  {$l$};
  \fill (0,0)     circle (1.6pt);
  \node[below]       at (0,0)     {$O$};
  \fill (\Zx,0)   circle (1.6pt);
  \node[below]       at (\Zx,0)   {$z$};
  \fill (\Px,0)   circle (1.6pt);
  \node[below]       at (\Px,0)   {$f(z)$};
  \fill (\Px,\Py) circle (1.6pt);
  \node[above right] at (\Px,\Py) {$P$};
  \fill (0,\h)    circle (1.6pt);
  \fill ( \r,\h)  circle (1.6pt);
  \fill (-\r,\h)  circle (1.6pt);
\end{tikzpicture}
\caption{Side view of the construction. The right cone edge $O$--$(R,h)$ contains $P$; the construction line $z$--$(0,h)$ also contains $P$. The perpendicular from $P$ to the baseline has length $l$ and meets the baseline at $f(z)$. Two pairs of similar right triangles are visible in this diagram, formed by the perpendiculars at $f(z)$ and at $z$ together with the axis and the right cone edge; the proof of Proposition~\ref{prop:p10} uses the two resulting ratios $h/|z|=l/(|z|-|f(z)|)$ and $R/h=|f(z)|/l$ to eliminate $l$ and $h$.}
\label{fig:side}
\end{figure}

We first verify that the construction is well posed and produces a point that does not depend on the height $h$ of the cone, only on the base radius $R$. This is the \emph{height-independence theorem}.

\begin{proposition}[Height independence]\label{prop:p10}
The point $f(z)$ produced by the construction above is
\begin{equation}\label{eq:fz}
  f(z) \;=\; \frac{z}{\,1+\tfrac{1}{R}|z|\,},
\end{equation}
and is therefore independent of the height $h$ of the cone.
\end{proposition}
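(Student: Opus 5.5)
The plan is to reduce everything to the axial cross-section through $z$ and compute $P$ explicitly, rather than rely only on the similar-triangles picture; this also settles the uniqueness of $P$ claimed after the construction. Fix $z\neq 0$ and write $u=z/|z|$ for the unit direction. The vertical half-plane spanned by $u$ and the cone axis contains $z$, the point $(0,0,h)$, and the edge of the lateral surface on the side of $z$. In coordinates $(s,t)$, where $s$ is the signed distance along $u$ and $t$ is the height, this edge is the segment $t=(h/R)s$ for $0\le s\le R$. The segment $L_z$ from $(|z|,0)$ to $(0,h)$ is parametrized as $(s,t)=((1-\lambda)|z|,\lambda h)$ with $\lambda\in[0,1]$.

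The first step is to intersect $L_z$ with the edge line. The condition is $\lambda h=(h/R)(1-\lambda)|z|$. Here $h$ cancels, and we get the unique solution $\lambda=|z|/(|z|+R)$, which lies in $(0,1)$. The corresponding $s$ is $(1-\lambda)|z|=R|z|/(R+|z|)$. This is less than $R$, so the intersection point lies on the actual edge segment, strictly below the rim.

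Next we must check that no other point of $L_z$ lies on the lateral surface. Every point of $L_z$ is in the half-plane with $s\ge 0$. The lateral surface meets that closed half-plane only in the edge segment; the opposite edge has $s\le 0$ and meets it only at $O$, and $O\notin L_z$ because $z\neq 0$. So $P$ is unique, and the sign argument already sketched in the text is confirmed. Dropping the perpendicular from $P$ gives the foot $s\,u$. Hence
\[
f(z)=\frac{R|z|}{R+|z|}\cdot\frac{z}{|z|}=\frac{z}{1+|z|/R},
\]
which is \eqref{eq:fz}. For $z=0$ the formula returns $0$, matching the convention $f(0):=0$.

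As a cross-check against Figure~\ref{fig:side}, I would note that the two similar-triangle ratios $h/|z|=l/(|z|-|f(z)|)$ and $R/h=|f(z)|/l$ can be multiplied to eliminate $h$ and $l$. This gives $R(|z|-|f(z)|)=|z|\,|f(z)|$, which is the reciprocal lens identity $1/|f(z)|=1/|z|+1/R$. The direction of $f(z)$ is that of $z$ because everything happens in the axial half-plane.

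There is no serious obstacle. The only step needing care is the uniqueness and well-posedness of $P$: confirming that the edge line is hit within the actual segment and that the opposite half of the cone is never met. The explicit parametrization handles this directly. Height independence follows because $h$ cancels in the intersection equation before $\lambda$ is determined.
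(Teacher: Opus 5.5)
Your proof is correct and follows the paper's route: reduce to the axial cross-section through $z$, intersect the cone edge with $L_z$, note that $h$ cancels, and read off the direction from the fact that $P$ lies in the half-plane of $z$. The only difference is that you solve for the intersection parametrically rather than through the two similar-triangle ratios, which you recover as your cross-check. This has the minor advantage of showing, in the same computation, that $P$ is unique and lies strictly below the rim — the explicit check that the remark after the construction defers to this proof.
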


\begin{proof}
The case $z=0$ holds by the definition $f(0)=0$. For $z\ne 0$, the construction line $L_z$ and the cone axis meet at $(0,0,h)$, and the cone axis contains the origin. Hence the two lines together with $z$ determine a single $2$-dimensional plane $\Pi$ containing $O$, $z$, $(0,0,h)$, and the cone axis (see Figure~\ref{fig:side}). We set up coordinates in $\Pi$ such that the cone axis is the positive vertical axis and $z$ lies on the positive horizontal axis at $(|z|, 0)$.

In this planar cross-section, the apex is $(0,0)$, the center of the base is $(0,h)$, and the relevant rim point of the cone is $(R,h)$. The right cone edge forms a line segment from $(0,0)$ to $(R,h)$, while the construction line $L_z$ runs from $(|z|,0)$ to $(0,h)$.

The intersection $P$ of these two segments projects orthogonally onto the horizontal axis at $f(z)$, lying between $0$ and $|z|$. Let $l$ be the height of $P$ above the baseline. By similar triangles dropping to the baseline (for the first ratio below, the right triangles with hypotenuses along $L_z$ and legs $h,|z|$ and $l,|z|-|f(z)|$; for the second, the right triangles with hypotenuses along the cone edge through the apex and legs $R,h$ and $|f(z)|,l$), we have two immediate ratios:
\[
  \frac{h}{|z|} = \frac{l}{|z|-|f(z)|} \quad \text{and} \quad \frac{R}{h} = \frac{|f(z)|}{l}.
\]
Multiplying these equations eliminates $l$ and $h$, yielding $\frac{R}{|z|} = \frac{|f(z)|}{|z|-|f(z)|}$. Rearranging for $|f(z)|$ gives:
\begin{equation}\label{eq:mag}
  |f(z)| = \frac{R\,|z|}{R+|z|} = \frac{|z|}{1+\frac{1}{R}|z|}.
\end{equation}
Since $P$ lies in the same half-plane of $\Pi$ as $z$, $f(z)$ lies on the ray $Oz$, so multiplying by $z/|z|$ recovers the vector formula \eqref{eq:fz}, independent of $h$.
\end{proof}

The map $f_R:\mathbb{C}\to D_R$, $D_R:=\{w\in\mathbb{C}\,:\,|w|<R\}$, defined by \eqref{eq:fz}, is a homeomorphism (indeed a $C^1$-diffeomorphism, smooth on $\mathbb{C}\setminus\{0\}$ and only $C^1$ at $O$, as the regularity discussion below records), with inverse $f_R^{-1}(w)=w/(1-|w|/R)$ on $D_R$ (solve $|w|=R|z|/(R+|z|)$ for $|z|=R|w|/(R-|w|)$, then use that $f_R$ fixes each ray); we write $f_R$ to emphasize the dependence on the base radius $R$. Figure~\ref{fig:global} shows this map globally, as the compression of the whole plane into the disk $D_R$.

\begin{figure}[htbp]
\centering
\begin{tikzpicture}[line cap=round, line join=round, font=\small, >={Latex[length=2.4mm]}]
  \begin{scope}[scale=0.275]
    \foreach \rr in {1,2,4,8}{\draw[gray!55] (0,0) circle (\rr);}
    \foreach \a in {0,30,...,330}{\draw[gray!55] (0,0) -- (\a:8.7);}
    \fill (0,0) circle (0.18);
    \node[below] at (0,-9.6) {$\mathbb{C}$ (polar grid)};
  \end{scope}
  \draw[->, thick] (2.75,0) -- (4.25,0) node[midway, above] {$f_R$};
  \begin{scope}[xshift=7.0cm, scale=2.2]
    \foreach \s in {0.5,0.66667,0.8,0.88889}{\draw[gray!55] (0,0) circle (\s);}
    \foreach \a in {0,30,...,330}{\draw[gray!55] (0,0) -- (\a:1);}
    \draw[dashed, thick] (0,0) circle (1);
    \fill (0,0) circle (0.024);
    \node[below] at (0,-1.18) {$D_R$};
  \end{scope}
\end{tikzpicture}
\caption{The cone projection as a global map. Left: a polar grid of the plane, with circles $|z|=R,2R,4R,8R$ and rays from $O$. Right: its image under $f_R$. Every ray is fixed, and the circle $|z|=\rho$ is carried to the circle $|w|=R\rho/(R+\rho)$. The image circles crowd against the boundary $\partial D_R$ (dashed), which is the common limit of the images as $|z|\to\infty$. This is the homeomorphism $f_R:\mathbb{C}\to D_R$: the whole plane is compressed into the open disk of radius $R$, and the crowding of the circles is a visual form of the lens-equation identity $1/|f_R(z)|=1/|z|+1/R$.}
\label{fig:global}
\end{figure}

\paragraph{Regularity of $f_R$.}

Because the formula for $f_R$ involves $|z|$, the map is smooth on $\mathbb{C}\setminus\{0\}$ but not at the origin. To make this precise, we pass to real coordinates $\mathbf{x}\in\mathbb{R}^2$, in which
\[
  f_R(\mathbf{x})=\frac{\mathbf{x}}{1+|\mathbf{x}|/R}=\frac{R}{R+|\mathbf{x}|}\,\mathbf{x}.
\]
We claim that $f_R$ is globally $C^1$ but fails to be $C^2$ at $0$. Away from the origin $f_R$ is a composition of smooth maps, and differentiating gives its total derivative, the Jacobian matrix $Df_R(\mathbf{x})$ with entries $\bigl(Df_R\bigr)_{ij}=\partial (f_R)_i/\partial x_j$; using $\partial|\mathbf{x}|/\partial x_j=x_j/|\mathbf{x}|$ for $\mathbf{x}\ne0$ and the quotient rule,
\[
  Df_R(\mathbf{x})=\frac{R}{R+|\mathbf{x}|}\,I_2
  -\frac{R}{(R+|\mathbf{x}|)^2}\,\frac{\mathbf{x}\otimes \mathbf{x}}{|\mathbf{x}|}
  \qquad(\mathbf{x}\ne0),
\]
where $I_2$ is the $2\times 2$ identity matrix and $\mathbf{x}\otimes\mathbf{x}=\mathbf{x}\mathbf{x}^{\mathsf T}$ denotes the rank-one matrix with entries $x_i x_j$. To see that the second term vanishes at the origin, write $\mathbf{x}=|\mathbf{x}|\,\mathbf{u}$ with $\mathbf{u}$ a unit vector: the numerator $\mathbf{x}\otimes\mathbf{x}$ is quadratic in $\mathbf{x}$ while only one power of $|\mathbf{x}|$ is divided out, so
\[
  \frac{\mathbf{x}\otimes\mathbf{x}}{|\mathbf{x}|}=|\mathbf{x}|\,(\mathbf{u}\otimes\mathbf{u}).
\]
Since the projection $\mathbf{u}\otimes\mathbf{u}$ has operator norm $1$, the second term is bounded in norm by $R|\mathbf{x}|/(R+|\mathbf{x}|)^2\le|\mathbf{x}|/R$, a bound independent of the direction $\mathbf{u}$; it therefore tends to $0$ as $\mathbf{x}\to0$. Hence $Df_R(\mathbf{x})\to I_2$ as $\mathbf{x}\to0$. Since $f_R$ is continuous on $\mathbb{R}^2$, differentiable on $\mathbb{R}^2\setminus\{0\}$, and $Df_R$ has limit $I_2$ at the origin, the mean value theorem gives differentiability at $0$ with $Df_R(0)=I_2$; the derivative therefore extends continuously across the origin and $f_R$ is $C^1$ on all of $\mathbb{R}^2$.

It is not, however, twice differentiable at the origin. Along any unit vector $\mathbf{u}$, the scalar function
\[
  t\longmapsto \mathbf{u}\cdot f_R(t\mathbf{u})=\frac{t}{1+|t|/R}
\]
has one-sided second derivatives $-2/R$ as $t\to0^+$ and $+2/R$ as $t\to0^-$; these disagree, so $f_R$ is not $C^2$ there. The inverse map on $D_R$ has the same regularity: $f_R^{-1}(w)=w/(1-|w|/R)$ has the same radial form with $-1/R$ in place of $1/R$, so the same analysis applies on $D_R$. Accordingly, the word ``diffeomorphism'' below is to be read in the $C^1$ sense, unless smoothness away from the origin is stated explicitly.

\paragraph{Roadmap.}
The remainder of the paper is organized as follows. Section~\ref{sec:lens} states the reciprocal lens-equation identity, the algebraic engine of most of what follows. Section~\ref{sec:geometry} then gives the focus--directrix recap, proves the Self-Directrix Theorem with its one-parameter trichotomy, identifies the image arc, characterizes $f_R$ by the Self-Directrix property, and establishes the Confocal--Codirectrix Theorem that generalizes it from lines to every focal polar locus of a fixed focus--directrix pencil, closing with a chord-level rederivation of its parameter laws from a classical focal-chord harmonic-mean theorem of Askwith; these are the central geometric results of the paper. Section~\ref{sec:circle-image} then determines the image of a circle, which is in general not a conic but a circular quartic, with a trichotomy governed by the circle's position relative to $O$. Sections~\ref{sec:semigroup}--\ref{sec:iteration} develop the algebraic structure of the family: the composition law, the one-parameter partial group of radial M\"obius maps, and the flow that interpolates them. Section~\ref{sec:cross-ratio} treats the projective structure on rays, and Section~\ref{sec:alt-constructions} collects further constructions and interpretations. Sections~\ref{sec:higher-d}--\ref{sec:characterization} give the higher-dimensional extension, the pullback length metric, and a normalized characterization.

\section{The reciprocal lens identity}\label{sec:lens}

Inverting \eqref{eq:fz} produces an identity that complements the height-independence theorem.

\begin{proposition}[Lens-equation identity]\label{prop:lens}
For every $z\in\mathbb{C}\setminus\{0\}$,
\[
  \frac{1}{|f_R(z)|} \;=\; \frac{1}{|z|} \;+\; \frac{1}{R}.
\]
\end{proposition}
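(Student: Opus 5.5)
The plan is to read the identity directly off the magnitude formula \eqref{eq:mag} established in the proof of Proposition~\ref{prop:p10}. Equivalently, take the modulus of \eqref{eq:fz}. Since $1+|z|/R$ is a positive real number, we get
\[
  |f_R(z)|=\frac{|z|}{1+|z|/R}=\frac{R\,|z|}{R+|z|}.
\]

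For $z\neq 0$ this quantity is strictly positive, so its reciprocal is defined. Inverting gives
\[
  \frac{1}{|f_R(z)|}=\frac{R+|z|}{R\,|z|}.
\]
Splitting the fraction termwise yields $\frac{1}{|z|}+\frac{1}{R}$, which is the claim.

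There is no genuine obstacle. The only points needing care are two.
\begin{enumerate}[label=(\roman*)]
  \item The hypothesis $z\ne0$ is exactly what makes both $1/|z|$ and $1/|f_R(z)|$ finite. This uses the fact that $f_R(z)=0$ only at $z=0$, since $f_R$ fixes rays and has positive modulus off the origin.
  \item Taking the modulus of \eqref{eq:fz} is legitimate because the denominator is real and positive.
\end{enumerate}

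Optionally, I would remark that the same one-line computation applied to $f_R^{-1}(w)=w/(1-|w|/R)$ gives the inverse form $1/|z|=1/|f_R(z)|-1/R$, which is consistent with the identity.
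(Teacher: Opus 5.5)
Your proposal is correct and matches the paper's proof: both start from the magnitude formula \eqref{eq:mag}, $|f_R(z)|=R|z|/(R+|z|)$, note that $z\neq0$ makes this positive, and then invert and split the fraction to get $1/|z|+1/R$. Your remarks on positivity of the denominator and on the inverse form are accurate but not needed beyond what \eqref{eq:mag} already supplies.
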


\begin{proof}
By \eqref{eq:mag}, $|f_R(z)|=R|z|/(R+|z|)$. Since $z\ne0$,
\[
  \frac{1}{|f_R(z)|}
  =\frac{R+|z|}{R|z|}
  =\frac{1}{|z|}+\frac{1}{R}. \qedhere
\]
\end{proof}

This identity is the same reciprocal-addition law that appears in the rule for the total resistance of two resistors in parallel, and in the formulas for capacitors in series, springs in series, and reduced mass. It is also formally analogous to a thin-lens formula after a choice of sign convention; this analogy is used below only at the level of reciprocal-distance algebra, not as a claim about the usual real-image sign convention of elementary optics. The cone parameter $R$ plays the role of the fixed reciprocal-addition parameter.

\begin{definition}[Cone curvature]
The \emph{cone curvature} associated with $f_R$ is $\kappa(f_R):=1/R$.
\end{definition}

\noindent Here ``curvature'' names only the reciprocal-radius parameter $1/R$ that governs the radial projection law; it is \emph{not} the intrinsic (Gaussian) curvature of the cone's lateral surface, which vanishes away from the apex.

In these terms, Proposition~\ref{prop:lens} reads: \emph{the cone projection $f_R$ adds the cone curvature $1/R$ to the inverse magnitude of any nonzero point.} Section~\ref{sec:group} below extends $\kappa$ to a signed parameter taking arbitrary real values, with $\kappa>0$ corresponding to cone projections, $\kappa=0$ to the identity (the cone of vanishing curvature), and $\kappa<0$ to inverse maps on the disk of radius $1/|\kappa|$; on this extended range, composition adds $\kappa$ exactly.

\section{Conic geometry and the Self-Directrix Theorem}\label{sec:geometry}

\subsection{Classical focus--directrix recap}\label{sec:conic-recap}

We turn directly to the Euclidean geometry of $f_R$, the central result of the paper. We first recall the two facts the argument rests on.

First, by Proposition~\ref{prop:p10} the cone projection is radial: it fixes the direction of every nonzero $z$ and rescales the modulus by the positive factor $R/(R+|z|)\in(0,1)$, so that $f_R(z)=\frac{R}{R+|z|}\,z$, and, by \eqref{eq:mag},
\begin{equation}\label{eq:recap-mag}
  |f_R(z)|=\frac{R\,|z|}{R+|z|}.
\end{equation}

Second, given a point $F$ (the \emph{focus}), a line $\ell_0$ not through $F$ (the \emph{directrix}), and a number $e>0$ (the \emph{eccentricity}), the associated \emph{conic} is the locus
\[
  \mathcal{C}(F,\ell_0,e)=\bigl\{\,P:\ |PF|=e\cdot\operatorname{dist}(P,\ell_0)\,\bigr\},
\]
an ellipse, parabola, or hyperbola according as $e<1$, $e=1$, or $e>1$. This is the standard focus--directrix description of the conic sections; see, for example, \cite{akopyan2007,brannan2012}. In polar coordinates centred at the focus $F$, with $\alpha$ the direction of the axis (the ray from $F$ perpendicular to $\ell_0$, pointing toward it) and $\delta=\operatorname{dist}(F,\ell_0)$, the locus takes the familiar form
\begin{equation}\label{eq:recap-polar}
  \rho(\theta)=\frac{\lambda}{1+e\cos(\theta-\alpha)},
  \qquad \lambda=e\,\delta,
\end{equation}
valid over the directions with $1+e\cos(\theta-\alpha)>0$ (all of them for an ellipse; for $e>1$ this parametrizes the focus-side branch of the hyperbola). Here $\lambda$ is the \emph{semi-latus rectum}, the focal radius in the two directions $\theta=\alpha\pm\tfrac{\pi}{2}$ perpendicular to the axis. Equation~\eqref{eq:recap-polar} is derived in the present setting in \eqref{eq:polar-conic} below and is the form the argument of Section~\ref{sec:confocal} works with.

\subsection{The Self-Directrix Theorem}\label{sec:directrix}

Because $f_R$ depends on $|z|$ it is not holomorphic; nor is it conformal, since away from the origin its radial and tangential stretch factors, $R^2/(R+\rho)^2$ and $R/(R+\rho)$, are unequal for every $\rho>0$, coinciding (both tending to $1$) only in the limit $\rho\to0$, where the conformal distortion vanishes and $f_R$ is tangent to the identity at the origin. \emph{A priori} it therefore has no reason to send any distinguished family of curves to another. It nevertheless has a precise relation to one classical family.

\begin{theorem}[Self-Directrix Theorem]\label{thm:directrix}
Let $\ell$ be a line that does not pass through the origin $O$, and let $d=\operatorname{dist}(O,\ell)>0$. Then $f_R$ carries $\ell$ onto the open arc, between the two latus-rectum endpoints (the subarc through the near vertex, identified precisely in Proposition~\ref{prop:image-arc}), of the conic
\[
  \Gamma=\mathcal{C}\!\left(O,\ \ell,\ \tfrac{R}{d}\right):
\]
the origin $O$ is a focus of $\Gamma$, the line $\ell$ itself is the directrix of $\Gamma$ belonging to that focus, and the eccentricity is $e=R/d$. The semi-latus rectum of $\Gamma$ equals $R$, the same value for every line. Consequently $\Gamma$ is
\[
  \text{an ellipse if } d>R,\qquad
  \text{a parabola if } d=R,\qquad
  \text{a hyperbola if } d<R.
\]
Equivalently, writing $\Gamma_+$ for the focus-side branch of $\Gamma$ (all of $\Gamma$ when $d\ge R$, since an ellipse or parabola lies entirely on the focus side of its directrix), the image is exactly
\[
  f_R(\ell)=\Gamma_+\cap D_R,
\]
the open arc cut from $\Gamma_+$ by the disk $D_R$ (its two endpoints, the latus-rectum points, lie on $\partial D_R$ and are not attained).
\end{theorem}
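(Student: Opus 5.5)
We work in polar coordinates centred at $O$ and rotate so that the foot of the perpendicular from $O$ to $\ell$ lies on the positive real axis; since $f_R$ commutes with rotations about $O$, this loses no generality. In these coordinates $\ell$ is the polar line
\[
  r(\theta)=\frac{d}{\cos\theta},\qquad \theta\in\bigl(-\tfrac{\pi}{2},\tfrac{\pi}{2}\bigr).
\]
Each ray from $O$ in this open half-range of directions meets $\ell$ exactly once, and no other ray meets it at all.

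\textbf{Step 1: polar equation of the image.} Because $f_R$ fixes rays (Proposition~\ref{prop:p10}), the image point on the ray at angle $\theta$ has modulus $\rho(\theta)$ determined by the lens identity (Proposition~\ref{prop:lens}):
\[
  \frac{1}{\rho(\theta)}=\frac{\cos\theta}{d}+\frac{1}{R},
  \qquad\text{that is}\qquad
  \rho(\theta)=\frac{R}{1+\frac{R}{d}\cos\theta}.
\]
This is exactly the focal polar form \eqref{eq:recap-polar} with $\alpha=0$, $e=R/d$ and $\lambda=R$. Since $\lambda=e\,\delta$ forces $\delta=d$, the directrix is the line at distance $d$ from $O$ perpendicular to the axis on the side $\theta=0$, which is $\ell$ itself.

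To avoid relying on the recap alone, I would verify the focus--directrix property directly. For $P=\rho e^{i\theta}$ we have $\operatorname{dist}(P,\ell)=d-\rho\cos\theta$, because $P$ lies on the $O$-side of $\ell$: indeed $\rho<R|z|/|z|$ gives $\rho<r$ on each ray. The relation $\rho(1+e\cos\theta)=R$ is then equivalent to $\rho=e(d-\rho\cos\theta)$, i.e.\ $|PO|=e\,\operatorname{dist}(P,\ell)$. With this, the focus, the directrix, the eccentricity $e=R/d$, the semi-latus rectum $R$, and the trichotomy all follow.

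\textbf{Step 2: identifying the image set.} As $\theta$ runs over $(-\pi/2,\pi/2)$, $\cos\theta>0$, so $1+e\cos\theta>1$ and the formula gives $\rho<R$. The parametrization is injective in $\theta$, because it is one point per ray. The image is therefore the set of points of the locus $\{\rho(1+e\cos\theta)=R,\ 1+e\cos\theta>0\}$, which is the focus-side branch $\Gamma_+$, whose directions satisfy $|\theta|<\pi/2$.

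Conversely, suppose a point of $\Gamma_+$ has $\rho<R$. From $\rho(1+e\cos\theta)=R$ we get $1+e\cos\theta>1$, hence $\cos\theta>0$, so the point lies on a ray that meets $\ell$, and it is therefore the image of that intersection point. This proves $f_R(\ell)=\Gamma_+\cap D_R$.

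At $\theta=\pm\pi/2$ we get $\rho=R$, so these are the latus-rectum endpoints. They lie on $\partial D_R$ and are not attained because $\theta$ ranges over an open interval. The arc through $\theta=0$ is the one containing the near vertex. For $d\ge R$, we have $1+e\cos\theta>0$ for all $\theta$ (apart from the single direction $\theta=\pi$ in the parabola case, which contributes no point), so $\Gamma_+=\Gamma$.

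\textbf{Main obstacle.} The computation itself is one line. The delicate part is the hyperbola case $d<R$, where the set bookkeeping must be done carefully: one must check that points of $\Gamma_+$ with $\pi/2<|\theta|$ and $1+e\cos\theta>0$ have $\rho>R$, so that they lie outside $D_R$, and that the far branch, on which $1+e\cos\theta<0$, never enters the description. Both facts come from the sign of $\cos\theta$ in $\rho(1+e\cos\theta)=R$, and I would state them explicitly.
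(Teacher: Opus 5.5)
Your proof is correct and is essentially the paper's argument in reordered form: your lens-shifted polar equation $1/\rho=\cos\theta/d+1/R$ is the same relation the paper obtains as $x_w=d(1-r/R)$ via the inverse map. You derive the polar form first and the focus--directrix relation second, and your sign argument (that $\rho<R$ forces $\cos\theta>0$) makes the equality $f_R(\ell)=\Gamma_+\cap D_R$ more explicit than the paper's proof does.

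There are two small slips. The inequality written as $\rho<R|z|/|z|$ should read $\rho=R|z|/(R+|z|)<|z|$. Also, $\Gamma_+=\Gamma$ for $d\ge R$ does not follow from $1+e\cos\theta>0$. It follows because a far-side point would satisfy $\rho(e\cos\theta-1)=ed>0$, which has no solution when $e\le 1$.
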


The line one starts with reappears, unchanged, as the directrix of the curve one ends with. This is the sense in which the result is ``self-directrix'': the same geometric object has two roles. At the beginning it is the input line whose points are being projected; at the end it is the directrix that defines the image conic. Throughout, the image lies on the \emph{focus side} of the directrix: each image point $w=f_R(z)$ lies strictly between $O$ and $z$ on the ray $Oz$, hence in the open half-plane bounded by $\ell$ that contains $O$. This fixes the unsigned distance formula $\operatorname{dist}(w,\ell)=d-x_w$ used in the proof and, in the hyperbolic case $d<R$, places the image on the focus-side branch. Indeed, for $d<R$ the full focus--directrix locus has two branches, and $f_R(\ell)$ reaches only the central portion of the focus-side branch (Proposition~\ref{prop:image-arc}). The proof below establishes the focus--directrix equation and the classification; Proposition~\ref{prop:image-arc} in Section~\ref{sec:image-arc} then identifies exactly which arc of $\Gamma$ is swept out. Figure~\ref{fig:directrix} shows the pointwise mechanism in the elliptic case: several points $z_j\in\ell$ move inward along their rays to $w_j=f_R(z_j)$, and a representative point $w$ displays the focus--directrix relation.

\begin{figure}[!ht]
\centering
\begin{tikzpicture}[
  >=Latex,
  scale=1.35,
  every node/.style={font=\small},
  ray/.style={thin,gray!60},
  maparrow/.style={-{Latex[length=2.4mm]},thick,blue!65!black},
  directrix/.style={thick,gray!75,dashed},
  imagearc/.style={very thick,blue!70!black},
  helper/.style={thin,densely dashed,gray!65},
  zright/.style={right},
  zbelowright/.style={below right},
  wbelow/.style={below left},
  wabove/.style={above left}
]

\pgfmathsetmacro{\R}{2.0}
\pgfmathsetmacro{\d}{3.25}
\pgfmathsetmacro{\Ymax}{2.75}
\pgfmathsetmacro{\ecc}{\R/\d}

\draw[->,thin,gray!70] (-0.35,0) -- (4.15,0) node[below left] {$x$};
\draw[->,thin,gray!70] (0,-3.20) -- (0,3.05) node[left] {$y$};
\draw[thick,densely dashed,gray!90] (0,0) circle[radius=\R];
\node[gray!100] at (-0.58,1.55) {$\partial D_R$};

\fill (0,0) circle (1.45pt);
\node[below left] at (0,0) {$O$};
\node[anchor=north west,xshift=-25pt,yshift=-7pt] at (0,0) {focus};

\draw[directrix] (\d,-\Ymax) -- (\d,\Ymax);
\node[above,align=center] at (\d,\Ymax)
  {$\ell=\{x=d\}$\\[-1pt]$\ell$ = directrix};

\draw[imagearc,domain=-88:88,samples=220,smooth,variable=\t]
  plot ({(\R/(1+\ecc*cos(\t)))*cos(\t)},
        {(\R/(1+\ecc*cos(\t)))*sin(\t)});
\node[blue!70!black,align=center] at (0.95,-1.58)
  {$f_R(\ell)$\\[-1pt]conic arc};

\draw[blue!70!black,fill=white,thick] (0,\R) circle (2.0pt);
\draw[blue!70!black,fill=white,thick] (0,-\R) circle (2.0pt);
\node[left=2pt,yshift=9pt] at (0,\R) {$iR$};
\node[left=2pt,yshift=-9pt] at (0,-\R) {$-iR$};

\foreach \ty/\lab/\zpos/\wpos in {-2.20/1/zright/wbelow,-0.75/2/zbelowright/wbelow,1.25/3/zright/wabove,2.35/4/zright/wabove}{
  \pgfmathsetmacro{\s}{sqrt(\d*\d+\ty*\ty)}
  \pgfmathsetmacro{\xw}{\R*\d/(\R+\s)}
  \pgfmathsetmacro{\yw}{\R*\ty/(\R+\s)}

  \draw[ray] (0,0) -- (\d,\ty);
  \draw[maparrow] (\d,\ty) -- (\xw,\yw);

  \fill[black] (\d,\ty) circle (1.4pt);
  \fill[blue!70!black] (\xw,\yw) circle (1.6pt);

  \node[\zpos] at (\d,\ty) {$z_{\lab}$};
  \ifnum\lab=3\relax
  \else
    \node[\wpos,blue!70!black] at (\xw,\yw) {$w_{\lab}$};
  \fi
}

\pgfmathsetmacro{\tstar}{1.25}
\pgfmathsetmacro{\sstar}{sqrt(\d*\d+\tstar*\tstar)}
\pgfmathsetmacro{\xstar}{\R*\d/(\R+\sstar)}
\pgfmathsetmacro{\ystar}{\R*\tstar/(\R+\sstar)}

\draw[red!70!black,thick] (0,0) -- (\xstar,\ystar)
  node[midway,below,xshift=11pt,yshift=4pt] {$|Ow|$};
\draw[red!70!black,densely dashed,thick] (\xstar,\ystar) -- (\d,\ystar)
  node[midway, above, xshift=10pt, yshift=-2pt] {$\operatorname{dist}(w,\ell)$};
\draw[red!70!black,thin] (\d,\ystar) ++(-0.10,0) -- ++(0,-0.10) -- ++(0.10,0);
\node[blue!70!black,anchor=south east,xshift=-6pt,yshift=5pt,fill=white,inner sep=1pt] at (\xstar,\ystar) {$w=w_3$};
\draw[<->,thin] (0,-2.92) -- (\d,-2.92)
  node[midway,fill=white,inner sep=1pt] {$d$};
\node[align=center] at (3.66,-3.02) {$d>R$\\[-1pt]elliptic case};

\end{tikzpicture}
\caption{The pointwise mechanism of the Self-Directrix Theorem in the elliptic case ($d>R$), drawn in normalized coordinates $\ell=\{x=d\}$. Points $z_j\in\ell$ move inward along their rays from the focus $O$ to $w_j=f_R(z_j)$ on the conic arc $f_R(\ell)$. For the representative point $w=w_3$, the red segments show the focus distance $|Ow|$ and the directrix distance $\operatorname{dist}(w,\ell)$, illustrating the relation $|Ow|=(R/d)\operatorname{dist}(w,\ell)$. The open arc has limiting endpoints $\pm iR$, which are not attained.}
\label{fig:directrix}
\end{figure}
\begin{proof}
Because $f_R$ commutes with every rotation about $O$ (Proposition~\ref{prop:p10}), we may rotate coordinates so that $\ell=\{x=d\}$ with $d>0$. This costs no generality.

Let $w=f_R(z)$ for some $z\in\ell$, write $w=(x_w,y_w)$, and set $r:=|w|$. Recall from Section~\ref{sec:setup} that $f_R$ is a homeomorphism $\mathbb{C}\to D_R$ with inverse $f_R^{-1}(w)=w/(1-|w|/R)$ defined on all of $D_R$; since $r<R$, this inverse applies, and $z=f_R^{-1}(w)=(x_w,y_w)/(1-r/R)$. The condition $z\in\ell$ reads
\[
  \frac{x_w}{1-r/R} \;=\; d, \qquad\text{i.e.,}\qquad x_w \;=\; d\!\left(1-\frac{r}{R}\right).
\]
Because $r/R\in(0,1)$, we have $x_w\in(0,d)$, so $w$ lies strictly on the origin's side of $\ell$. The distance from $w$ to $\ell$ is therefore
\[
  \operatorname{dist}(w,\ell) \;=\; d-x_w \;=\; \frac{d}{R}\,r,
\]
which rearranges to the focus--directrix relation
\begin{equation}\label{eq:focus-directrix}
  |Ow| \;=\; \frac{R}{d}\,\operatorname{dist}(w,\ell).
\end{equation}
This identifies $f_R(\ell)$ as a subset of the conic $\mathcal{C}(O,\ell,R/d)$ with focus $O$, directrix $\ell$ itself, and eccentricity $e=R/d$. The conic classification follows: an ellipse for $d>R$ (where $e<1$), a parabola for $d=R$ ($e=1$), and a hyperbola for $d<R$ ($e>1$). For the full conic $\Gamma=\mathcal{C}(O,\ell,R/d)$, the latus-rectum chord associated with the focus $O$ is the line through $O$ perpendicular to the axis, namely $x=0$. Setting $x=0$ in the focus--directrix relation gives $\operatorname{dist}(w,\ell)=d$, hence $|Ow|=R$; so $\Gamma$ has semi-latus rectum $R$. The image arc $f_R(\ell)$ has $x_w\in(0,d)$, so it approaches but does not attain the two latus-rectum endpoints.

Finally, the polar form of \eqref{eq:focus-directrix} is the standard focal polar equation of $\Gamma$ with respect to the focus $O$; in the hyperbolic case, it is the equation of the focus-side branch. Writing $w=re^{i\theta}$, $x_w=r\cos\theta$ gives $\operatorname{dist}(w,\ell)=d-r\cos\theta$, and \eqref{eq:focus-directrix} yields
\begin{equation}\label{eq:polar-conic}
  r \;=\; \frac{R}{1+(R/d)\cos\theta}, \qquad \theta\in\bigl(-\tfrac{\pi}{2},\tfrac{\pi}{2}\bigr),
\end{equation}
the range of $\theta$ coming from the constraint $x_w>0$ derived above; this traces the part of $\Gamma$ with $x_w>0$, the open central arc between the two latus-rectum endpoints. Conversely, every point $w=re^{i\theta}$ of that arc lies in $f_R(\ell)$: since $1-r/R=\frac{(R/d)\cos\theta}{1+(R/d)\cos\theta}$, the inverse image $f_R^{-1}(w)=w/(1-r/R)$ has first coordinate $\frac{r\cos\theta}{1-r/R}=d$, so $f_R^{-1}(w)\in\ell$. Hence $f_R$ carries $\ell$ exactly onto this open arc, whose endpoints are identified in Proposition~\ref{prop:image-arc}. In the hyperbolic case $d<R$, the focus-side branch continues beyond $\theta=\pm\tfrac{\pi}{2}$ out to its asymptotic directions $\theta=\pm\arccos(-d/R)$, where $x_w<0$; that continuation, together with the far branch on the other side of $\ell$ (where $\operatorname{dist}(w,\ell)=r\cos\theta-d$ and the polar equation is $r=R/((R/d)\cos\theta-1)$), is not reached by $f_R(\ell)$ \cite{akopyan2007}.
\end{proof}

The theorem organizes the entire Euclidean menagerie of conics along a single real parameter. Let one line drift from infinity inward toward the origin and watch its image (Figure~\ref{fig:trichotomy}).

\begin{corollary}[The one-parameter trichotomy]\label{cor:sweep}
As $d=\operatorname{dist}(O,\ell)$ decreases from $+\infty$ to $0$, the associated conic $\Gamma$ runs through every type of conic, and $f_R(\ell)$ is the corresponding open arc:
\[
\begin{array}{rcl}
  d\to\infty &:& e=R/d\to0;\ \text{for a fixed line direction, the image arc tends to the}\\
              && \text{corresponding open semicircular arc approaching }\partial D_R;\\[2pt]
  d>R &:& 0<e<1,\ \text{a family of elliptic arcs};\\[2pt]
  d=R &:& e=1,\ \text{a parabolic arc};\\[2pt]
  d<R &:& e>1,\ \text{a family of hyperbolic arcs};\\[2pt]
  d\to0^{+} &:& e\to\infty,\ \text{the conic degenerates; a line through }O
                \text{ maps to an open diameter of }D_R.
\end{array}
\]
Throughout the sweep two data never change: the focus stays fixed at $O$, and the semi-latus rectum stays fixed at $R$.
\end{corollary}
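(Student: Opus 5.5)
The plan is to read the corollary off Theorem~\ref{thm:directrix} as a statement about the one-parameter family $e=R/d$. By rotation invariance (Proposition~\ref{prop:p10}), fix the line direction so that $\ell=\ell_d=\{x=d\}$, and work with the explicit polar form \eqref{eq:polar-conic}, $r_d(\theta)=R/(1+(R/d)\cos\theta)$ for $\theta\in(-\tfrac{\pi}{2},\tfrac{\pi}{2})$. The theorem already gives, for each $d>0$, three facts: the focus is $O$, the semi-latus rectum is $R$, and $f_R(\ell_d)$ is exactly this open arc. These are the two invariants claimed in the last sentence of the corollary. The middle three rows of the table follow at once from the classification in the theorem, since $e=R/d$ is a strictly decreasing bijection from $(0,\infty)$ onto $(0,\infty)$. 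Hence as $d$ decreases, $e$ increases and passes through every value. This yields elliptic arcs for $d>R$, the single parabolic arc at $d=R$, and hyperbolic (focus-side branch) arcs for $d<R$. In particular every conic type occurs.

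For the limit $d\to\infty$, I will show that $r_d(\theta)\to R$ uniformly in $\theta\in(-\tfrac{\pi}{2},\tfrac{\pi}{2})$. Since $0\le\cos\theta\le1$ on this range, we get $R/(1+R/d)\le r_d(\theta)\le R$. The image arcs therefore converge uniformly, as polar graphs over the fixed angular interval, to the open semicircle $\{Re^{i\theta}:|\theta|<\tfrac{\pi}{2}\}\subset\partial D_R$. Note that this limit lies on $\partial D_R$, approached from inside. I will also note that the direction of $\ell$ is held fixed here, which is why the limiting semicircle is well defined.

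For $d\to0^+$, the conic itself degenerates: $e\to\infty$ and the directrix passes into $O$. Here I would argue at two levels. First, for fixed $\theta\in(-\tfrac{\pi}{2},\tfrac{\pi}{2})$, we have $r_d(\theta)\approx d/\cos\theta\to0$, so the arcs collapse toward $O$ on every compact subinterval of angles. The endpoints, however, stay pinned at $\pm iR$, so the arcs accumulate on the segment from $-iR$ to $iR$. Second, at $d=0$ the line $\ell_0$ passes through $O$. Because $f_R$ fixes each ray through $O$ and maps $[0,\infty)$ homeomorphically onto $[0,R)$ (Section~\ref{sec:setup}), $f_R$ maps $\ell_0$ onto the open diameter $\ell_0\cap D_R$, which is the stated degenerate image.

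The main obstacle is making precise in what sense the endpoint rows are ``limits''. For $d\to0^+$ the convergence is not uniform in $\theta$: near $\theta=\pm\tfrac{\pi}{2}$ the arc sweeps out the segment along the imaginary axis. I would therefore state the convergence in the Hausdorff sense on closures, with the closed arcs tending to $[-iR,iR]$, which is the closure of the image diameter. I would check this by showing two things: every point $it$ with $|t|\le R$ is a limit of arc points, taking $\theta$ with $\cos\theta\sim d$; and every arc point is within $o(1)$ of that segment, since $x_w=r\cos\theta<d$ from the proof of Theorem~\ref{thm:directrix}. The bound $x_w<d$ gives the second part directly, so the only real work is the first part.
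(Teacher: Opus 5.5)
Your proposal is correct and follows the paper's own route: the middle rows and the two invariants are read straight off the Self-Directrix Theorem via the monotone bijection $d\mapsto e=R/d$, and the endpoint rows are treated as limiting degenerations. The paper leaves those limits informal, and you make them precise with the uniform sandwich $R/(1+R/d)\le r_d(\theta)\le R$ and a Hausdorff estimate. The step you flag as the real work is in fact immediate: as $z=d+it$ runs over $\ell_d$, the ordinate $y_w=Rt/(R+\sqrt{d^2+t^2})$ increases strictly from $-R$ to $R$ while $0<x_w<d$, so the arc is a graph over the open diameter at horizontal distance less than $d$, and the Hausdorff distance is at most $d$.
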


The two endpoints are limiting degenerations in this fixed-$R$ scaling rather than ordinary members of the theorem: as $d\to\infty$ the eccentricity $e=R/d\to0$, whose limit is a circle rather than a focus--directrix conic with a finite directrix distance; and as $d\to0^+$ the directrix passes through the focus $O$, which the hypothesis $d>0$ excludes.

\begin{remark}[The $d\to\infty$ endpoint as the classical $e=0$ conic]
The endpoint $d\to\infty$ also has a natural classical interpretation. In the
limiting focus--directrix convention, a circle is the conic of eccentricity
$e=0$: its centre is the focus and its directrix is the line at infinity, the
eccentricity being read as the limiting ratio of a finite radius to an infinite
focus--directrix distance \cite[\S94, p.~95]{askwith1903}. The present sweep is
consistent with that convention. As $d\to\infty$, the directrix $\ell$ recedes to
the line at infinity, while
\[
  e=\frac{R}{d}\longrightarrow 0.
\]
At the same time, the carrier conic of the Self-Directrix arc tends to the circle
$\partial D_R$, since its semi-latus rectum remains $R$ while its eccentricity
tends to $0$.

There is one useful distinction to draw. For a fixed family of parallel affine
lines, the actual forward image arc tends only to the corresponding open
semicircle of $\partial D_R$; this is the limiting version of the angular range
$\theta\in(-\tfrac{\pi}{2},\tfrac{\pi}{2})$. The full circle appears either as the
limiting carrier conic or by passing to the radial compactification, where $f_R$
extends continuously by sending each ideal direction to the corresponding point
of $\partial D_R$, since
\[
  |f_R(z)|=\frac{R|z|}{R+|z|}\longrightarrow R \qquad (|z|\to\infty).
\]
Thus the $d\to\infty$ end is not a break in the conic family but its classical
$e=0$ member: the circle of radius $R$ centred at $O$. In this limiting sense the
two invariants of the sweep persist, namely the focus $O$ and the semi-latus
rectum $R$; and the self-directrix pattern survives at the projective boundary,
with the limiting input line now interpreted as the line at infinity.
\end{remark}

\begin{remark}[The universal parabolic constant at the transition]
The critical line $d=R$ is distinguished analytically, not only
projectively. By the Self-Directrix Theorem its image is the latus-rectum
arc of a parabola of semi-latus rectum $R$: the arc
$\theta\in(-\tfrac{\pi}{2},\tfrac{\pi}{2})$ of the polar form
\eqref{eq:polar-conic} with $e=1$, where
$r=R/(1+\cos\theta)=\tfrac{R}{2}\sec^{2}\tfrac{\theta}{2}$. Differentiating gives
$r'=R\sin\theta/(1+\cos\theta)^{2}$, so $r^{2}+r'^{2}=2R^{2}/(1+\cos\theta)^{3}$;
with $1+\cos\theta=2\cos^{2}\tfrac{\theta}{2}$ the polar arc-length integrand
becomes $\sqrt{r^{2}+r'^{2}}=\tfrac{R}{2}\sec^{3}\tfrac{\theta}{2}$, and the
length of the arc evaluates explicitly,
\[
  \int_{-\pi/2}^{\pi/2}\!\sqrt{r^{2}+r'^{2}}\,d\theta
  =\int_{-\pi/2}^{\pi/2}\!\frac{R}{2}\,\sec^{3}\!\tfrac{\theta}{2}\,d\theta
  =\bigl(\sqrt{2}+\ln(1+\sqrt{2})\bigr)R
  =P\,R\approx 2.29559\,R,
\]
where $P=\sqrt{2}+\ln(1+\sqrt{2})$ is the \emph{universal parabolic
constant}, the parabola's scale-free analogue of $\pi$ \cite{sondow2013parbelos,reeseSondow}.
Since $P$ is defined as the ratio of a parabola's latus-rectum arc to its
semi-latus rectum, the Self-Directrix Theorem reproduces that defining
configuration exactly, with semi-latus rectum $R$.

For finite $d$, this parabolic transition is the distinguished closed-form
case: away from it the image arc is elliptic or hyperbolic and its length is
naturally expressed as an elliptic integral depending on the eccentricity
$e=R/d$. In this latus-rectum normalization the two scale-free constants that
appear are the circular constant $\pi$ and the universal parabolic constant
$P$, at exactly the two distinguished lines of the trichotomy of
Corollary~\ref{cor:sweep}: the degenerate endpoint $d\to\infty$, whose image
is the semicircle of $\partial D_{R}$ of length $\pi R$ (cf.\ the $e=0$ reading
above), and the transition $d=R$, of length $PR$. Numerical evaluation gives arc lengths increasing from the limiting value $2R$
(the open diameter) as $d\to0^{+}$, passing through $PR$ at $d=R$, and tending
to $\pi R$ as $d\to\infty$; for instance, $d=1.5R$, $2R$, and $3R$ give
$\approx 2.430R$, $2.532R$, and $2.671R$, respectively.
\end{remark}

\begin{figure}[htbp]
\centering
\begin{tikzpicture}[scale=1.85, font=\small, line cap=round, line join=round, >=Latex,
   conic/.style={very thick}]
  \def\R{1.5}
  \draw[densely dotted, thick, black!75] (0,0) circle (\R);
  \node[black!75, font=\footnotesize] at (-1.22,-1.22) {$\partial D_R$};
  \draw[->, darkgray] (-1.8,0) -- (3.5,0) node[below right]{axis};
  \draw[thick, dashed, blue!60!black] (0.75,-1.75) -- (0.75,1.75);
  \draw[thick, dashed, blue!60!black] (1.50,-1.75) -- (1.50,1.75);
  \draw[thick, dashed, blue!60!black] (3.00,-1.75) -- (3.00,1.75);
  \node[below, align=center, blue!60!black] at (0.75,-1.78) {$d<R$};
  \node[below, align=center, blue!60!black] at (1.50,-1.78) {$d=R$};
  \node[below, align=center, blue!60!black] at (3.00,-1.78) {$d>R$};
  \draw[conic, orange!85!black]
    plot[domain=-89:89, samples=180]
      ({\R/(1+2*cos(\x))*cos(\x)}, {\R/(1+2*cos(\x))*sin(\x)});
  \draw[conic, orange!85!black]
    plot[domain=-89:89, samples=180]
      ({\R/(1+cos(\x))*cos(\x)}, {\R/(1+cos(\x))*sin(\x)});
  \draw[conic, orange!85!black]
    plot[domain=-89:89, samples=180]
      ({\R/(1+0.5*cos(\x))*cos(\x)}, {\R/(1+0.5*cos(\x))*sin(\x)});
  \fill (0,0) circle (1.6pt);
  \node[below left, font=\footnotesize] at (0,0) {$O$ (focus)};
  \fill[orange!85!black] (0.5,0)  circle (1.1pt);
  \fill[orange!85!black] (0.75,0) circle (1.1pt);
  \fill[orange!85!black] (1.0,0)  circle (1.1pt);
  \draw[orange!85!black, fill=white] (0,\R)  circle (1.8pt);
  \node[above] at (0,\R)  {$iR$};
  \draw[orange!85!black, fill=white] (0,-\R) circle (1.8pt);
  \node[below] at (0,-\R) {$-iR$};
  \draw[thin, gray] (0.96,0.34) -- (1.95,0.66);
  \node[right, orange!60!black] at (1.95,0.66) {ellipse};
  \draw[thin, gray] (0.74,0.26) -- (1.95,0.06);
  \node[right, orange!60!black] at (1.95,0.06) {parabola};
  \draw[thin, gray] (0.51,0.17) -- (1.95,-0.54);
  \node[right, orange!60!black] at (1.95,-0.54) {hyperbola};
\end{tikzpicture}
\caption{The one-parameter trichotomy (Corollary~\ref{cor:sweep}), drawn for a common $R$ and a fixed line direction. Three parallel lines (dashed; by the Self-Directrix Theorem each is also the directrix of its own image) at distances $d<R$, $d=R$, $d>R$ from $O$ produce a hyperbola, a parabola, and an ellipse. All three image arcs share the focus $O$, share the semi-latus rectum $R$, and have the same limiting endpoints $\pm iR$ (open dots, on the boundary circle $\partial D_R$, dotted) in their closures in the coordinates shown; for a different line direction these endpoints rotate with the direction of the line. They differ only in eccentricity $e=R/d$.}
\label{fig:trichotomy}
\end{figure}

Geometrically, for a fixed line direction, the fixed semi-latus rectum has the following consequence: every full image conic passes through the same two latus-rectum endpoints. In the coordinates shown these are $\pm iR$; in general they are $\pm Ru$, where $u$ is a unit vector parallel to the line. The open image arcs are anchored at that pair in their closures, fanning open as the directrix recedes.

\subsection{Boundary analysis of the image arc}\label{sec:image-arc}

Theorem~\ref{thm:directrix} placed the image on the conic $\Gamma$ and classified it. We now pin down which arc of $\Gamma$ is swept out.

\begin{proposition}[The image arc]\label{prop:image-arc}
Let $u$ be a unit vector parallel to $\ell$. Then $f_R(\ell)$ is the open arc of $\Gamma$ whose limiting endpoints in $\partial D_R$ are $\pm Ru$; these endpoints are not attained. It is the open arc between the two latus-rectum endpoints, lying on the side of the latus-rectum line that contains the nearer vertex; equivalently, the side facing the directrix $\ell$. In the hyperbolic case $d<R$, where $\Gamma$ is the full two-branch focus--directrix hyperbola, $f_R(\ell)$ lies on the focus-side branch, the branch whose vertex lies between $O$ and the directrix $\ell$.
\end{proposition}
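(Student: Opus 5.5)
By rotation invariance of $f_R$ (Proposition~\ref{prop:p10}) I will normalize, as in the proof of Theorem~\ref{thm:directrix}, to $\ell=\{x=d\}$ with $d>0$, so that $u=\pm i$. The proof of that theorem already shows that $f_R(\ell)$ is exactly the polar arc
\[
  r(\theta)=\frac{R}{1+(R/d)\cos\theta},\qquad \theta\in\bigl(-\tfrac{\pi}{2},\tfrac{\pi}{2}\bigr).
\]
The remaining task is to read off the boundary behaviour and the location of this arc.

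\textbf{Limiting endpoints.} I will parametrize $\ell$ by $z=d+it$ with $t\in\mathbb{R}$. Then $f_R(d+it)=R(d+it)/(R+\sqrt{d^2+t^2})$. As $t\to\pm\infty$ the real part tends to $0$ and the imaginary part tends to $\pm R$, so the image tends to $\pm iR=\pm Ru$. Since $|f_R(z)|<R$ for all $z$, these points are never attained. The map $t\mapsto\theta=\arg(d+it)$ is a continuous increasing bijection $\mathbb{R}\to(-\pi/2,\pi/2)$. Hence the image is a connected open arc, traversed monotonically, with exactly these two limits. This matches $\theta\to\pm\pi/2$ in the polar form, where $r\to R$.

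\textbf{Side of the latus-rectum line.} The latus-rectum line for focus $O$ is $x=0$, and the image has $x_w=d(1-r/R)\in(0,d)$. So the arc lies in the open half-plane $x>0$, which is the side containing $\ell$. The near vertex is the point $\theta=0$, namely $r=Rd/(d+R)$ at $x=Rd/(R+d)\in(0,d)$, and it lies on the arc. The opposite vertex, when it exists, lies at $\theta=\pi$ with $x<0$, so it is not on the arc. This identifies the arc as the one between $\pm Ru$ through the nearer vertex.

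\textbf{Hyperbolic case.} This is the only step needing care, namely the identification of the correct branch. I will use that the two branches of $\Gamma=\mathcal{C}(O,\ell,R/d)$ with $d<R$ are separated by $\ell$. On one branch $\operatorname{dist}(w,\ell)=d-x_w$ with $x_w<d$. On the other, $x_w>d$ and the polar equation is $r=R/((R/d)\cos\theta-1)$. Since every image point has $x_w<d$, the arc lies on the focus-side branch. Its vertex $x=Rd/(R+d)$ lies in $(0,d)$, strictly between $O$ and $\ell$. The far branch's vertex is at $x=Rd/(R-d)>d$, beyond $\ell$, which confirms the labelling.
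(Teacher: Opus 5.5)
Your argument is correct and is essentially the paper's. Both normalize to $\ell=\{x=d\}$, observe that $\arg z$ sweeps $(-\tfrac{\pi}{2},\tfrac{\pi}{2})$ monotonically while $|f_R(z)|\to R$ at both ends, and use $f_R(\mathbb{C})=D_R$ for non-attainment; you additionally spell out the side and branch claims from $x_w=d(1-r/R)\in(0,d)$, which the paper leaves to the proof of Theorem~\ref{thm:directrix}.

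One remark is inaccurate. For $d<R$ the other vertex of $\Gamma$ lies not at $x<0$ but at $x=Rd/(R-d)>d$, as your own hyperbolic paragraph states, so your sentence placing the opposite vertex at $\theta=\pi$ with $x<0$ holds only for the ellipse. The conclusion that this vertex is not on the arc is unaffected, since it follows from $x_w<d$.
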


\begin{proof}
Work in the normalized coordinates of the proof of Theorem~\ref{thm:directrix}, where $\ell=\{x=d\}$. As $z$ runs along $\ell$, the angle $\theta=\arg z$ sweeps the open interval $(-\tfrac{\pi}{2},\tfrac{\pi}{2})$ once, monotonically. At the two ends $|z|\to\infty$, so by \eqref{eq:recap-mag} $|w|\to R$ while $\arg w\to\pm\tfrac{\pi}{2}$; hence $f_R(\ell)$ is the open conic arc with limiting endpoints $\pm iR$. In coordinate-free form these endpoints are
\[
  \pm R u \;=\; \partial D_R\ \cap\ \{\text{the diameter of } D_R \text{ parallel to }\ell\},
\]
and they are not attained, since $f_R(\mathbb{C})$ is the open disk $D_R$. They are the endpoints of the latus rectum through $O$ (the focal chord perpendicular to the axis) because along the directions $\theta=\pm\tfrac{\pi}{2}$ the polar radius $|w|$ equals the semi-latus rectum $R$. Finally, $f_R\colon\ell\to f_R(\ell)$ is a homeomorphism, so $\ell$ is swept onto this arc bijectively and without backtracking.
\end{proof}

\begin{remark}[Completing the arc to the full conic]
The image is only an arc (Proposition~\ref{prop:image-arc}) because
\[
  f_R(\ell)=\Gamma_+\cap D_R ,
\]
where $\Gamma_+$ is the branch of the conic $\Gamma$ of Theorem~\ref{thm:directrix} reached by $f_R$: the focus-side branch, which is all of
$\Gamma$ when $e\le1$ and, for the hyperbola $e>1$, the branch whose vertex lies between $O$ and
$\ell$. This branch meets $\partial D_R$ exactly at the two latus-rectum endpoints $\pm Ru$ (the
$|z|\to\infty$ limits along $\ell$): the near arc lies inside $D_R$, these endpoints lie on
$\partial D_R$, and the rest of $\Gamma_+$ lies outside $\overline{D_R}$.

This distinction affects the equality with $\Gamma\cap D_R$ only for strongly hyperbolic lines ($e>2$).
In the normalized coordinates $\ell=\{x=d\}$, the intersections of the full algebraic conic $\Gamma$ with $\partial D_R$ occur at $x=0$, giving
$\pm Ru$, and at $x=2d$, giving $y^2=R^2-4d^2$, which is real precisely when $e=R/d\ge2$; these
extra points lie on the far branch. Thus for $e>2$ the far branch dips inside $D_R$ (its vertex is
at distance $R/(e-1)<R$), so $\Gamma\cap D_R$ would be strictly larger than $f_R(\ell)$, while
$\Gamma_+\cap D_R$ is always exactly the arc.

Since the range of $f_R$ is all of $D_R$ and nothing more, the outer part of $\Gamma_+$ is never a
forward image, no matter how far one goes along the line; the obstruction is the bounded range, not
the domain. The full conic $\Gamma$ is recovered by the first
two constructions below; the third explains why a line yields only an arc.
\begin{enumerate}[label=\textnormal{(\roman*)}]
\item\textbf{Algebraically.} A nondegenerate conic is determined by any arc of positive length:
two distinct conics meet in at most four points. The full conic $\Gamma$ is therefore already fixed
as the unique conic containing the arc $f_R(\ell)$; for a hyperbola this is the full two-branch
conic, whose second branch is supplied explicitly by the projective continuation below.

\item\textbf{Projectively.} Write $\ell$ in signed focal polar form, $1/\rho=\cos\theta/d$, valid
for every $\theta\neq\pm\tfrac{\pi}{2}$ with $\rho$ the signed radius. The cone law of Proposition~\ref{prop:lens} is exactly
$1/r=1/\rho+1/R$, so applying it over the whole circle of directions yields
\[
  \frac{1}{r}=\frac{1}{R}+\frac{\cos\theta}{d},
\]
the complete conic. Equivalently, the radial action is the one-dimensional M\"obius map $M_R$ of
Section~\ref{sec:cross-ratio},
\[
  M_R=\begin{pmatrix}R&0\\ 1&R\end{pmatrix};
\]
restricted, for each direction, to the genuine forward ray $\rho\in[0,\infty]$ it produces the arc
(the directions in which $\ell$ lies in front of $O$),
while over the full projective line $\rho\in\mathbb{RP}^1$ it traces all of $\Gamma$. The endpoints
$\pm Ru$ correspond to $\rho=\infty\mapsto R$, the part outside $D_R$ to the negative-radius
branch, and the pole $\rho=-R$ (where $\cos\theta=-d/R$) to the asymptotic direction of the
hyperbolic case.

\item\textbf{Structurally (Confocal--Codirectrix).} The line is the degenerate $e\to\infty$
member of the focus--directrix pencil to which $\Gamma$ belongs, all of whose members share the
focus $O$ and the directrix $\ell$. The Confocal--Codirectrix Theorem (Theorem~\ref{thm:confocal}) carries the entire admissible
focal-polar locus of one member to that of another member of the same pencil, with the same focus
and directrix; in the pencil coordinate this is again the lens shift
$1/\lambda\mapsto1/\lambda+1/R$ on the semi-latus rectum, sending the line $(\lambda=\infty)$ to
$\Gamma$ $(\lambda=R)$. Unlike the nondegenerate members of the pencil, the line is seen from $O$
only over the half-circle of forward directions $\theta\in(-\tfrac{\pi}{2},\tfrac{\pi}{2})$, which is
why its forward image is an arc rather than the full locus.
\end{enumerate}
Only the elliptic case $d>R$ has a bounded completion; for $d=R$ the far point recedes to infinity
(parabola), and for $d<R$ the continuation crosses to the second branch through the asymptotic
directions (hyperbola). Figure~\ref{fig:arc-completion} shows the elliptic case.
\end{remark}

\begin{figure}[htbp]
\centering
\begin{tikzpicture}[>=Latex, scale=1.0, line cap=round, line join=round,
   every node/.style={font=\small},
   arc/.style={very thick, blue!70!black},
   cont/.style={thick, orange!85!black, densely dashed},
   ray/.style={thin, gray!55},
   maparrow/.style={-{Latex[length=2mm]}, thin, red!65!black},
   directrix/.style={thick, gray!70, dashed}]
  \def\R{2}\def\ecc{0.5}\def\dd{4}
  \draw[->,thin,gray!65] (-4.7,0) -- (4.9,0) node[below right] {axis};
  \draw[->,thin,gray!65] (0,-2.8) -- (0,3.0) node[left] {$y$};
  \draw[thick,densely dashed,gray!80] (0,0) circle[radius=\R];
  \node[gray!75] at (-1.5,-1.5) {$\partial D_R$};
  \draw[directrix] (\dd,-2.6) -- (\dd,2.8);
  \node[above, gray!55!black] at (\dd,2.7) {$\ell=\{x=d\}$};
  \draw[cont, domain=90:270, samples=240, smooth, variable=\t]
     plot ({(\R/(1+\ecc*cos(\t)))*cos(\t)}, {(\R/(1+\ecc*cos(\t)))*sin(\t)});
  \node[orange!85!black, anchor=south] at (-2.2,2.2) {$\Gamma$ (full conic)};
  \draw[arc, domain=-90:90, samples=240, smooth, variable=\t]
     plot ({(\R/(1+\ecc*cos(\t)))*cos(\t)}, {(\R/(1+\ecc*cos(\t)))*sin(\t)});
  \node[blue!70!black, anchor=west] at (1.18,-0.85) {$f_R(\ell)=\Gamma_+\cap D_R$};
  \foreach \zy/\lab in {2.0/z, -1.2/{}}{
    \pgfmathsetmacro{\zmag}{sqrt(\dd*\dd+\zy*\zy)}
    \pgfmathsetmacro{\wx}{\R*\dd/(\R+\zmag)}
    \pgfmathsetmacro{\wy}{\R*\zy/(\R+\zmag)}
    \draw[ray] (0,0) -- (\dd,\zy);
    \draw[maparrow] (\dd,\zy) -- (\wx,\wy);
    \fill[black] (\dd,\zy) circle (1.2pt);
    \fill[blue!70!black] (\wx,\wy) circle (1.5pt);
  }
  \node[anchor=west] at (\dd,2.0) {$z\in\ell$};
  \pgfmathsetmacro{\wxa}{\R*\dd/(\R+sqrt(\dd*\dd+4.0))}
  \pgfmathsetmacro{\wya}{\R*2.0/(\R+sqrt(\dd*\dd+4.0))}
  \node[blue!70!black, anchor=south west] at (\wxa,\wya) {$w=f_R(z)$};
  \fill (0,0) circle (1.5pt);
  \node[below left] at (0,0) {$O$ (focus)};
  \draw[blue!70!black, fill=white, thick] (0,\R) circle (2pt);
  \draw[blue!70!black, fill=white, thick] (0,-\R) circle (2pt);
  \node[anchor=south west] at (0.06,\R) {$Ru=iR$};
  \node[anchor=north west] at (0.06,-\R) {$-Ru=-iR$};
\end{tikzpicture}
\caption{Completing the Self-Directrix arc, elliptic case $e=R/d=\tfrac12$ ($R=2$, $d=4$). The map $f_R$
pulls each point of the line $\ell=\{x=d\}$ inward along its ray from $O$ (two samples shown),
giving the bold arc $f_R(\ell)=\Gamma_+\cap D_R$. In this elliptic case $\Gamma_+=\Gamma$, and the
dashed continuation outside $D_R$ is the unique full ellipse determined by the image arc;
equivalently, it is traced by the projective radial action as the signed radial coordinate ranges
over $\mathbb{RP}^1$.}
\label{fig:arc-completion}
\end{figure}

\subsection{A characterization by the Self-Directrix property}\label{sec:sd-char}

The Self-Directrix property is not incidental to $f_R$; it characterizes it. Among all radial maps
fixing $O$, the cone projections are exactly those whose image of every line lies on a focus--directrix
conic with the line as its own directrix.

\begin{theorem}[Self-Directrix characterization]\label{thm:sd-char}
Let $\Phi:\mathbb{C}\to\mathbb{C}$ be radial and fix the origin: there is a function
$\sigma:[0,\infty)\to[0,\infty)$ with $\sigma(0)=0$ and $\Phi(\rho e^{i\theta})=\sigma(\rho)\,e^{i\theta}$.
Assume that for every line $\ell$ not through $O$ there is a number $e(\ell)\in(0,\infty)$,
depending only on $\ell$, such that every point $w\in\Phi(\ell)$ lies in the open half-plane bounded
by $\ell$ that contains $O$ and satisfies
\[
  |Ow|=e(\ell)\,\operatorname{dist}(w,\ell),
\]
the focus--directrix equation of the conic $\mathcal{C}\bigl(O,\ell,e(\ell)\bigr)$ with focus $O$ and
directrix $\ell$. (In the hyperbolic case $e(\ell)>1$ the half-plane condition places the image on
the focus-side branch.) Then there is a unique
$R_\Phi\in(0,\infty)$ with $\Phi=f_{R_\Phi}$, and necessarily
$e(\ell)=R_\Phi/\operatorname{dist}(O,\ell)$ for every such $\ell$. Conversely, for every $R>0$ the
map $f_R$ has this property, by Theorem~\ref{thm:directrix}.
\end{theorem}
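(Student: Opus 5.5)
Rotation invariance lets us put $\ell=\{x=d\}$ with $d>0$, and it is enough to study the ray images of points $z=d+it\in\ell$. Write $\rho=|z|=\sqrt{d^2+t^2}$, so $\cos\theta=d/\rho$ and $w=\Phi(z)=\sigma(\rho)e^{i\theta}$. The half-plane hypothesis forces $\sigma(\rho)>0$: if $\sigma(\rho)=0$ then $w=O$, and $|Ow|=0$ would contradict $\operatorname{dist}(O,\ell)=d>0$. It also gives $\operatorname{dist}(w,\ell)=d-\sigma(\rho)\cos\theta=d-\sigma(\rho)d/\rho$. The focus--directrix equation then becomes
\[
  \sigma(\rho)=e(\ell)\,d\Bigl(1-\frac{\sigma(\rho)}{\rho}\Bigr),
  \qquad\text{i.e.}\qquad
  \frac{1}{\sigma(\rho)}=\frac{1}{e(\ell)d}+\frac{1}{\rho},
\]
valid for every $\rho\ge d$, since every such $\rho$ is realized by some $t$. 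Set $\lambda(d)=e(\ell)d$, the would-be semi-latus rectum. By rotation invariance and the displayed relation, $\lambda$ depends only on $d$ (if we don't want to lean on this, each individual line still gives the relation).

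The key step is to show that $\lambda$ does not depend on the line. For two lines at distances $d_1<d_2$, the relation holds for both at any common $\rho\ge d_2$. Subtracting gives $1/\lambda(d_1)=1/\lambda(d_2)$, so $\lambda\equiv R_\Phi$ is a positive constant. Then $1/\sigma(\rho)=1/\rho+1/R_\Phi$ for every $\rho>0$, because any $\rho>0$ is $\ge d$ for some line (take $d\le\rho$). Together with $\sigma(0)=0$, this gives $\sigma(\rho)=R_\Phi\rho/(R_\Phi+\rho)$. By Proposition~\ref{prop:lens} and \eqref{eq:mag}, this is exactly $\Phi=f_{R_\Phi}$, and also $e(\ell)=R_\Phi/d$.

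For uniqueness, suppose $f_R=f_{R'}$. Comparing moduli at a single point with $\rho=1$ gives $1/R=1/R'$. The converse direction is Theorem~\ref{thm:directrix}, which shows $f_R$ satisfies the hypothesis with $e(\ell)=R/d$, the image lying strictly on the origin's side.

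The only delicate points are bookkeeping. We must use the half-plane assumption to fix the sign of the distance, so that the far-branch form $\sigma\cos\theta-d$ cannot occur; this is also what rules out degenerate $\sigma=0$ values. We must also check that the set of $\rho$ values realized on a line is $[d,\infty)$, so that the common ranges of different lines overlap. I expect that overlap argument showing $\lambda$ is constant to be the main step, and it is immediate once written in reciprocal form.
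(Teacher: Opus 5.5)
Your proposal is correct and follows the paper's proof essentially step for step: normalize to $\ell=\{x=d\}$, derive $1/\sigma(\rho)=1/\rho+1/(e(\ell)d)$ on $[d,\infty)$ from the focus-side distance $d(1-\sigma(\rho)/\rho)$, and compare two lines on the overlap $\rho\ge d_2$ to force $e(\ell)d\equiv R_\Phi$. The differences are cosmetic. You get uniqueness from injectivity of $R\mapsto f_R$ rather than reading $R_\Phi=e(\ell)\operatorname{dist}(O,\ell)$ off the data. Also, $\sigma(\rho)>0$ follows from the equation $|Ow|=e(\ell)\operatorname{dist}(w,\ell)$ evaluated at $w=O$, not from the half-plane clause as you wrote, since $O$ itself satisfies that clause.
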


\begin{proof}
Since $\Phi$ is radial it is rotation equivariant and the hypothesis is rotation invariant, so it
suffices to treat the lines $\ell_d=\{x=d\}$, $d>0$; write $e(d):=e(\ell_d)$. A point
$z=\rho e^{i\theta}\in\ell_d$ has $\cos\theta=d/\rho$, and as $z$ runs over $\ell_d$ the radius
$\rho$ runs over $[d,\infty)$; its image $w=\sigma(\rho)e^{i\theta}$ has modulus $s:=\sigma(\rho)$
and abscissa $x_w=sd/\rho$. First, $\sigma(\rho)>0$ for every $\rho>0$: given $\rho>0$, pick any
$d\in(0,\rho]$ and the point of $\ell_d$ of radius $\rho$; were $\sigma(\rho)=0$ its image would be
$O$, contradicting $|OO|=e(d)\operatorname{dist}(O,\ell_d)=e(d)\,d>0$. The focus-side hypothesis gives
$\operatorname{dist}(w,\ell_d)=d-x_w=d\,(1-s/\rho)$, and $|Ow|=e(d)\operatorname{dist}(w,\ell_d)$
becomes $s=e(d)\,d\,(1-s/\rho)$; dividing by $s>0$,
\begin{equation}\label{eq:sdchar}
  \frac{1}{\sigma(\rho)}=\frac{1}{\rho}+\frac{1}{e(d)\,d}\qquad(\rho\ge d).
\end{equation}
For $0<d_1<d_2$ and every $\rho\ge d_2$, the left-hand side of \eqref{eq:sdchar} is the same for
both lines, so $e(d_1)d_1=e(d_2)d_2$; hence the product $e(d)\,d=:R_\Phi$ is a positive constant.
Then \eqref{eq:sdchar} reads $1/\sigma(\rho)=1/\rho+1/R_\Phi$ for every $\rho>0$ (any $\rho$ exceeds
some admissible $d$), so $\sigma(\rho)=R_\Phi\rho/(R_\Phi+\rho)$ and $\Phi=f_{R_\Phi}$. Since the normalized computation is
invariant under rotation, the same constant $R_\Phi$ is obtained for every line $\ell\not\ni O$; thus
$R_\Phi=e(\ell)\operatorname{dist}(O,\ell)$ is determined by the data, hence unique, and
$e(\ell)=R_\Phi/\operatorname{dist}(O,\ell)$.
\end{proof}

\begin{remark}[No regularity is assumed]
No continuity, monotonicity, or boundedness of $\sigma$ is hypothesized; all three follow. This
complements the normalized characterization of Theorem~\ref{thm:char}, where a continuous, strictly
increasing M\"obius radial action with bounded image is assumed: here the M\"obius form
$\sigma(\rho)=\rho/(1+\rho/R_\Phi)$ is \emph{forced} by the geometry, and the bound
$\sup|\Phi|=R_\Phi$ and the first-order normalization $\sigma'(0^+)=1$ are consequences rather than
inputs.
\end{remark}

\begin{remark}[A single eccentricity per line is essential]
The hypothesis requires \emph{one} eccentricity $e(\ell)$ to serve the whole image of each line.
Without this it is vacuous: for any strictly inward radial map (one with $0<\sigma(\rho)<\rho$) every
image point already satisfies a self-directrix relation, with the point-dependent eccentricity
$e=\bigl[\operatorname{dist}(O,\ell)\,(1/\sigma(\rho)-1/\rho)\bigr]^{-1}$. It is the constancy of
$e(\ell)$ along each line, together with the comparison across lines, that pins $\sigma$ down.
\end{remark}

\begin{remark}[The focus-side clause is essential]
Under the present no-regularity hypotheses the focus-side clause cannot be dropped; it forces the
plus sign in \eqref{eq:sdchar}. Omitting it, the far-side distance
$\operatorname{dist}(w,\ell)=x_w-d$ gives instead $1/\sigma(\rho)=1/\rho-1/(e(d)d)$, the inverse
branch. The pure far-side law cannot hold on all of $[0,\infty)$, having a pole; but
\emph{mixed}-branch radial maps can still send every line into the correct self-directrix conic
without equalling any $f_R$. For example, fix $R>0$ and put $\sigma(\rho)=R\rho/(R-\rho)$ for
$0<\rho<R/2$ and $\sigma(\rho)=R\rho/(R+\rho)$ for $\rho\ge R/2$, with $\sigma(0)=0$: for every line
$\ell_d$ each image point lies on $\mathcal{C}(O,\ell_d,R/d)$ with the single eccentricity $R/d$ (on
the far branch when $\rho<R/2$, the focus-side branch when $\rho\ge R/2$), yet $\Phi\neq f_R$. The
focus-side hypothesis is the cleanest way to exclude such maps. For radial maps, the half-plane part
of that hypothesis is equivalently expressed by the global inwardness condition $\sigma(\rho)<\rho$
for $\rho>0$, since the far branch is outward, $\sigma(\rho)>\rho$. Continuity rules out the displayed example, which jumps from $R$ to
$R/3$ at $\rho=R/2$, though we do not claim it alone excludes every branch mixture.
\end{remark}

\begin{remark}[Higher dimensions]
The proof is dimension-free. A radial map of $\mathbb{R}^n$ that sends every hyperplane $H$ not
through $O$ into the focus side of the focal quadric of revolution with focus $O$ and directrix
hyperplane $H$, with one eccentricity per $H$, must equal some $f_{R_\Phi}$; for $H_d=\{x_1=d\}$ a
point $\rho\mathbf{u}\in H_d$ has $\rho u_1=d$, and the same computation applies. This is the
converse of the higher-dimensional Self-Directrix statement (Section~\ref{sec:higher-d}).
\end{remark}

\subsection{The Confocal--Codirectrix Theorem}\label{sec:confocal}

The Self-Directrix Theorem treats lines. A line, however, is only the degenerate member of a much larger classical family (the focal conics with focus $O$), and the same mechanism that pins the image of a line to its own directrix turns out to fix the focus and directrix of \emph{every} focal polar locus in a fixed focus--directrix \emph{pencil} at once (a term defined precisely below). We now state and prove this generalization. It contains Theorem~\ref{thm:directrix} as the limiting line case (the remark following the proof), and like that theorem it is a statement about \emph{arcs}: the image of an unbounded conic is an arc inside $D_R$, not a full conic. Throughout, \emph{confocal} is meant in the weak sense of sharing the one distinguished focus $O$ together with its directrix; because the eccentricity changes, the second focus of an ellipse or hyperbola generally moves, so the image is \emph{not} confocal in the classical two-foci sense.

\paragraph{Focal polar loci and arcs.}
At a glance, the objects below are the focal radius, the focus--directrix distance, and the directrix,
\[
  \rho(\theta)=\frac{\lambda}{1+e\cos(\theta-\alpha)},\qquad
  \delta=\frac{\lambda}{e},\qquad
  \ell_{\alpha,\delta}=\{z:\operatorname{Re}(e^{-i\alpha}z)=\delta\},
\]
the focal locus being the whole ellipse for $e<1$, the whole parabola for $e=1$ (the backward direction $\alpha+\pi$ excluded in the polar parametrization), and the focus-side branch of the hyperbola for $e>1$. The rest of this paragraph makes these precise.

Fix an axis direction $\alpha$, an eccentricity $e>0$, and a semi-latus
rectum $\lambda>0$. We regard directions as angles modulo $2\pi$ when a
full circle of directions is involved, and write
\[
  \mathbb T := \mathbb R/(2\pi\mathbb Z)
\]
for the circle of directions. Let
\[
  \Theta(e,\alpha)
  :=
  \{\theta:\ 1+e\cos(\theta-\alpha)>0\}.
\]
Equivalently,
\[
  \Theta(e,\alpha)
  =
  \begin{cases}
    \mathbb T, & 0<e<1,\\[2pt]
    \mathbb T\setminus\{[\alpha+\pi]\}, & e=1,\\[2pt]
    (\alpha-\theta^\ast,\alpha+\theta^\ast),
      \quad \theta^\ast=\arccos(-1/e), & e>1.
  \end{cases}
\]
Here $[\alpha+\pi]$ denotes the direction $\alpha+\pi$ modulo $2\pi$.

The set $\Theta(e,\alpha)$ consists of the directions in which the focal radius
\begin{equation}\label{eq:cc-branch}
  \rho(\theta)=\frac{\lambda}{1+e\cos(\theta-\alpha)}
\end{equation}
is positive and finite. Thus it is the whole circle of directions for an
ellipse; the circle with the single backward direction $\alpha+\pi$ removed
for a parabola; and an open angular sector for the focus-side branch of a
hyperbola \cite{akopyan2007,brannan2012}. In the hyperbolic case $e>1$ we have $-1/e\in(-1,0)$, so
$\theta^\ast=\arccos(-1/e)\in(\tfrac{\pi}{2},\pi)$, and the endpoint
directions $\alpha\pm\theta^\ast$ are the asymptotic directions of the
corresponding focus-side branch, where
$1+e\cos(\theta-\alpha)\to 0$ and $\rho\to\infty$.

It is illuminating to rewrite \eqref{eq:cc-branch} in reciprocal form. Since $e/\lambda=1/\delta$ with $\delta=\lambda/e$,
\begin{equation}\label{eq:cc-recip}
  \frac{1}{\rho(\theta)}=\frac{1}{\lambda}+\frac{1}{\delta}\cos(\theta-\alpha).
\end{equation}
In this coordinate the two parameters of the locus are separated: $1/\lambda$ is the constant term and $1/\delta$ is the coefficient of $\cos(\theta-\alpha)$. The cone projection acts by the reciprocal lens identity $1/\rho\mapsto1/\rho+1/R$ (Proposition~\ref{prop:lens}), which shifts only the constant term,
\[
  \frac{1}{\lambda}\longmapsto\frac{1}{\lambda}+\frac{1}{R},
  \qquad
  \frac{1}{\delta}\longmapsto\frac{1}{\delta},
\]
leaving the $\cos$-coefficient untouched. This is exactly why the directrix distance $\delta$ is preserved while the semi-latus rectum changes (the mechanism behind Theorem~\ref{thm:confocal} below).

For any connected subarc $I\subseteq\Theta(e,\alpha)$ (understood as a
connected arc of $\mathbb T$, possibly all of $\mathbb T$, when $0<e<1$, as a connected arc of the
punctured circle $\mathbb T\setminus\{[\alpha+\pi]\}$ when $e=1$, and as a
connected subinterval of $(\alpha-\theta^\ast,\alpha+\theta^\ast)$ when
$e>1$), define the focal arc
\[
  \mathcal{B}_I(\lambda,e,\alpha)
  :=
  \bigl\{\rho(\theta)e^{i\theta}:\theta\in I\bigr\}.
\]
We write
\[
  \mathcal{B}(\lambda,e,\alpha):=\mathcal{B}_{\Theta(e,\alpha)}(\lambda,e,\alpha)
\]
for the full focal polar locus. It lies on the classical conic with focus
$O$, axis direction $\alpha$, eccentricity $e$, and semi-latus rectum
$\lambda$: for $0<e\le 1$ it is that entire conic (the whole ellipse when
$0<e<1$, the whole parabola when $e=1$), while for $e>1$ it is the
focus-side branch of the hyperbola. The vertex nearest the directrix (for $e>1$, the focus-side vertex) lies in direction $\alpha$, where $\rho$
attains its minimum $\lambda/(1+e)$; the directrix belonging to the focus
$O$ is the line perpendicular to the axis, on that same side of $O$, at
distance
\[
  \delta=\frac{\lambda}{e}
\]
from $O$. Concretely, this directrix is the line
\[
  \ell_{\alpha,\delta}=\bigl\{z\in\mathbb{C}:\operatorname{Re}(e^{-i\alpha}z)=\delta\bigr\},
\]
and the focus-side branch is the part lying in the half-plane
$\operatorname{Re}(e^{-i\alpha}z)<\delta$ that contains $O$.

One focus and one directrix determine a whole one-parameter family of
conics, and we use the word \emph{pencil} in this elementary sense: for a
fixed focus $O$, a fixed axis direction $\alpha$, and a fixed directrix line
$\ell_{\alpha,\delta}$, the associated \emph{focal pencil} is the family
\[
  \bigl\{\,\mathcal{C}\bigl(O,\ \ell_{\alpha,\delta},\ e\bigr)\ :\ e>0\,\bigr\},
\]
obtained by varying the eccentricity $e$ while the focus and the directrix
are held fixed. Equivalently, since the semi-latus rectum is
$\lambda=e\delta$, the pencil may be parametrized by $(\lambda,e)$ with the
ratio $\lambda/e=\delta$ held fixed. In this terminology, the
Confocal--Codirectrix Theorem below says that $f_R$ carries each member of
such a pencil to (an arc of) another member of the \emph{same} pencil,
since $\lambda'/e'=\lambda/e=\delta$.

A line not passing through $O$, at distance $\delta>0$, may be viewed as the
limiting member of this fixed-directrix pencil as $e\to\infty$ with
$\delta$ fixed, so that $\lambda=e\delta\to\infty$; in this limit the conic
collapses onto its own directrix. Correspondingly the admissible sector
$\Theta(e,\alpha)=(\alpha-\theta^\ast,\alpha+\theta^\ast)$, with
$\theta^\ast=\arccos(-1/e)\to\tfrac{\pi}{2}$, narrows to the open half-plane
$(\alpha-\tfrac{\pi}{2},\alpha+\tfrac{\pi}{2})$ of directions in which a ray
from $O$ meets the line. By the \emph{carrier} of a focal arc
$\mathcal{B}_I(\lambda,e,\alpha)$ we mean the complete classical conic containing
that arc. Thus an arc may be a proper subset of its carrier. Conversely, our usage of ``arc'' includes the case of the full carrier: when the angular parameter runs over the whole circle (as for the image of an ellipse in Theorem~\ref{thm:confocal}), the arc \emph{is} the entire closed carrier ellipse.

\begin{theorem}[Confocal--Codirectrix Theorem]\label{thm:confocal}
Let $\lambda,R>0$, $e>0$, $\alpha\in\mathbb{R}$, and let
$\delta=\lambda/e$ be the focus--directrix distance. For every subarc
$I\subseteq\Theta(e,\alpha)$,
\begin{equation}\label{eq:cc-image}
  f_R\bigl(\mathcal{B}_I(\lambda,e,\alpha)\bigr)
  =
  \mathcal{B}_I(\lambda',e',\alpha),
  \qquad
  \lambda'=\frac{R\,\lambda}{R+\lambda},
  \qquad
  e'=\frac{R\,e}{R+\lambda},
\end{equation}
the image being taken over the \emph{same} subarc $I$; since $e'<e$ one has
$I\subseteq\Theta(e,\alpha)\subseteq\Theta(e',\alpha)$ (if
$\cos(\theta-\alpha)\ge0$ then $1+e'\cos(\theta-\alpha)\ge1>0$, while if
$\cos(\theta-\alpha)<0$ then
$1+e'\cos(\theta-\alpha)>1+e\cos(\theta-\alpha)>0$), so the right-hand
side is a well-defined focal arc.
Consequently $f_R(\mathcal{B}_I(\lambda,e,\alpha))$ lies on a conic with
the same focus $O$, the same axis direction $\alpha$, and the same
directrix:
\begin{equation}\label{eq:cc-transf}
  \delta'=\frac{\lambda'}{e'}=\delta,
  \qquad\text{equivalently}\qquad
  \frac{1}{\lambda'}=\frac{1}{\lambda}+\frac{1}{R},
  \quad
  \frac{1}{e'}=\frac{1}{e}+\frac{\delta}{R}.
\end{equation}
The focus, axis, and directrix are therefore invariant under $f_R$, while
the eccentricity strictly decreases, $e'<e$. Applied to the full locus
$\mathcal{B}(\lambda,e,\alpha)$, that is, to $I=\Theta(e,\alpha)$, the
image is described precisely by the type of the input:
\begin{enumerate}[label=\textnormal{(\roman*)}]
\item \textbf{Ellipse} $(e<1)$: here
$\Theta(e,\alpha)=\Theta(e',\alpha)$ is the whole circle of directions, so
$f_R(\mathcal{B})$ is the whole carrier ellipse
$\mathcal{B}(\lambda',e',\alpha)$, lying strictly inside $D_R$; its
farthest point from $O$ is at distance $\lambda'/(1-e')<R$.

\item \textbf{Parabola} $(e=1)$: the carrier of the image is the ellipse
of parameters $(\lambda',e')$, with $e'=R/(R+\lambda)<1$, and
$f_R(\mathcal{B})$ is that carrier ellipse with exactly one point removed:
its far vertex, in direction $\alpha+\pi$. That vertex is at distance
$\lambda'/(1-e')=R$ from $O$: the carrier ellipse is internally tangent to
$\partial D_R$, and the omitted point is precisely the point of tangency.
Thus the image itself does not meet $\partial D_R$.

\item \textbf{Hyperbola branch} $(e>1)$: $f_R(\mathcal{B})$ is the open
arc, over
$\theta\in(\alpha-\theta^\ast,\alpha+\theta^\ast)$ with
$\theta^\ast=\arccos(-1/e)\in(\tfrac{\pi}{2},\pi)$, of its carrier conic.
The type of that carrier (ellipse, parabola, or hyperbola) is
governed by $e'$, as in Corollary~\ref{cor:cc-tri}. Its two limiting
endpoints are
$R\,e^{i(\alpha\pm\theta^\ast)}\in\partial D_R$, lying in the asymptotic
directions of the original hyperbola branch, where
$1+e\cos(\theta-\alpha)\to0$ and $\rho\to\infty$. These endpoints are
approached but not attained; the arc is in general a proper subarc of the
carrier.
\end{enumerate}
\end{theorem}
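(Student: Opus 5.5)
The plan is to work pointwise along rays, using that $f_R$ is radial and obeys the lens identity of Proposition~\ref{prop:lens}. Take a point $w=\rho(\theta)e^{i\theta}$ of $\mathcal{B}_I(\lambda,e,\alpha)$ with $\theta\in I$. Because $f_R$ fixes each ray, $f_R(w)=r e^{i\theta}$ with the same $\theta$. The lens identity gives $1/r=1/\rho(\theta)+1/R$. Substituting the reciprocal form \eqref{eq:cc-recip} yields
\[
  \frac{1}{r}=\Bigl(\frac1\lambda+\frac1R\Bigr)+\frac1\delta\cos(\theta-\alpha).
\]
Set $1/\lambda'=1/\lambda+1/R$, which gives $\lambda'=R\lambda/(R+\lambda)$, and $e'=\lambda'/\delta=Re/(R+\lambda)$. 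Then $r=\lambda'/(1+e'\cos(\theta-\alpha))$, so $f_R(w)$ lies on $\mathcal{B}_I(\lambda',e',\alpha)$ in the same direction. This proves the inclusion $\subseteq$.

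For the reverse inclusion, take a point $r e^{i\theta}$ of the target arc with $\theta\in I$. Here $r<\lambda'/(\ldots)$ and $1/r-1/R=1/\rho(\theta)>0$, so $r<R$. Applying $f_R^{-1}(w)=w/(1-|w|/R)$ and reading the same identity backwards produces $\rho(\theta)e^{i\theta}\in\mathcal{B}_I(\lambda,e,\alpha)$. This gives \eqref{eq:cc-image}. The well-definedness remark $I\subseteq\Theta(e',\alpha)$ is already argued in the statement. The relations \eqref{eq:cc-transf} follow from $\delta'=\lambda'/e'=\delta$ by construction, together with $1/e'=\delta/\lambda'=\delta/\lambda+\delta/R$. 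The inequality $e'<e$ is immediate from $R/(R+\lambda)<1$.

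The case analysis for $I=\Theta(e,\alpha)$ handles each conic type in turn.
\begin{enumerate}[label=\textnormal{(\roman*)}]
\item \textbf{Ellipse.} Both angular sets are all of $\mathbb{T}$, so the image is the full ellipse. Its maximal radius is $\lambda'/(1-e')$, attained at $\theta=\alpha+\pi$. One computes $1-e'=(R+\lambda-Re)/(R+\lambda)$, which gives
\[
  \frac{\lambda'}{1-e'}=\frac{R\lambda}{R+\lambda(1-e)\cdot\frac{R}{\lambda}\cdot\frac{\lambda}{R}}\,;
\]
more simply, $\lambda'/(1-e')=R\lambda/(R(1-e)+\lambda)<R$, because $R(1-e)>0$.
\item \textbf{Parabola.} Here $e=1$, so $e'=R/(R+\lambda)<1$ and the carrier is an ellipse. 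Only the direction $\alpha+\pi$ is missing from $\Theta(1,\alpha)$, so exactly the far vertex is omitted. The same formula gives distance $R\lambda/\lambda=R$ for that vertex. Since every other point has radius $<R$ (it lies in $D_R$), the ellipse touches $\partial D_R$ only there, which gives internal tangency.
\item \textbf{Hyperbola.} The image is the arc over $(\alpha-\theta^\ast,\alpha+\theta^\ast)$. As $\theta\to\alpha\pm\theta^\ast$ we have $\rho\to\infty$, so $1/r\to1/R$ and $r\to R$. The endpoints are therefore $Re^{i(\alpha\pm\theta^\ast)}$. They are not attained, because $\theta^\ast$ is excluded from the open interval (equivalently, because $f_R(\mathbb{C})=D_R$).
\end{enumerate}

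The main obstacle I expect is not the computation but keeping the angular bookkeeping honest. First, one must make sure the image arc is indexed by the same $I$ and is a valid focal arc of the new parameters; this is handled by $\Theta(e)\subseteq\Theta(e')$. Second, one must state correctly that in case (iii) the carrier type depends on $e'$, which may be $<1$, $=1$ or $>1$. In that case the arc is generally proper, since $\Theta(e',\alpha)$ can be strictly larger than $\Theta(e,\alpha)$.
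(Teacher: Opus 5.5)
Your proposal is correct and follows the paper's proof essentially step for step. You use the radial lens shift $1/\rho\mapsto 1/\rho+1/R$, which changes only the constant term of the reciprocal focal equation and so yields $\lambda'$, $e'$ and $\delta'=\delta$; the three cases then come from the same far-vertex formula $\lambda'/(1-e')=R\lambda/(R(1-e)+\lambda)$ and the limit $\rho\to\infty$. Your reverse inclusion is harmless but unnecessary, since the forward computation already shows the image is exactly $\{\rho'(\theta)e^{i\theta}:\theta\in I\}$. You should delete the first display in case~(i), whose denominator simplifies to $R+\lambda(1-e)$ instead of the correct $R(1-e)+\lambda$, and also the stray fragment $r<\lambda'/(\ldots)$.
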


One focus and one directrix support a whole pencil of conics, and $f_R$
slides each focal polar locus inward along its rays to another member of the very same
pencil; see Figure~\ref{fig:confocal} for the elliptic case. The parabolic and
hyperbolic cases, with the directrix correspondingly closer to the focus, are shown
in Figures~\ref{fig:confocal-parabola} and~\ref{fig:confocal-hyperbola}.

\begin{figure}[htbp]
\centering
\begin{tikzpicture}[scale=1.25,>=Latex,
   conic/.style={very thick},
   pull/.style={->,red!70!black,thin}]
\def\dlt{2}
\draw[->,darkgray] (-3.4,0)--(2.7,0) node[below right]{axis};
\fill (0,0) circle (1.6pt)
  node[right=2pt, font=\footnotesize, xshift=-35pt, yshift=-7pt]{$O$ (focus)};
\draw[thick,dashed] (\dlt,-2.2)--(\dlt,2.2);
\node[right] at (\dlt,2.0) {directrix $\ \delta=\lambda/e$};

\draw[densely dotted, thick, black!75] (0,0) circle (1.68);
\node[black!75, font=\footnotesize] at (1.62,-1.18) {$\partial D_R$};

\draw[conic,blue!60!black]
  plot[domain=0:360,samples=240,variable=\t]
  ({1.2/(1+0.6*cos(\t))*cos(\t)},{1.2/(1+0.6*cos(\t))*sin(\t)});
\node[blue!60!black] at (-2.55,1.35) {$\mathcal{B}$};

\draw[conic,orange!85!black]
  plot[domain=0:360,samples=240,variable=\t]
  ({0.7/(1+0.35*cos(\t))*cos(\t)},{0.7/(1+0.35*cos(\t))*sin(\t)});
\node[orange!60!black] at (-1.4,0.65) {$f_R(\mathcal{B})$};

\foreach \t in {35,80,130,180,230,280,325}{
  \pgfmathsetmacro{\ro}{1.2/(1+0.6*cos(\t))}
  \pgfmathsetmacro{\ri}{0.7/(1+0.35*cos(\t))}
  \draw[pull] ({\ro*cos(\t)},{\ro*sin(\t)}) --
              ({\ri*cos(\t)},{\ri*sin(\t)});
}
\end{tikzpicture}
\caption{The Confocal--Codirectrix Theorem in the elliptic case. The focus
$O$ and the directrix (dashed) are shared by every conic in the pencil. The
map $f_R$ pulls each point of the locus $\mathcal{B}$ inward along its ray
from $O$ (red arrows) onto $f_R(\mathcal{B})$, another focal locus with the same focus
and the same directrix but smaller eccentricity; the image lies strictly
inside the boundary circle $\partial D_R$ (dotted). Starting from a fixed
finite ellipse, varying $R$ sweeps only the sub-pencil of smaller
eccentricities $0<e'<e$; it is the limiting line member $(e\to\infty)$ whose
images sweep the whole pencil, recovering the trichotomy of
Corollary~\ref{cor:sweep}.}
\label{fig:confocal}
\end{figure}

\begin{figure}[htbp]
\centering
\begin{tikzpicture}[scale=1.25,>=Latex,
   conic/.style={very thick},
   pull/.style={->,red!70!black,thin}]
\def\dlt{1.2}
\draw[->,darkgray] (-3.4,0)--(2.7,0) node[below right]{axis};
\fill (0,0) circle (1.6pt)
  node[right=2pt, font=\footnotesize, xshift=-35pt, yshift=-7pt]{$O$ (focus)};
\draw[thick,dashed] (\dlt,-2.7)--(\dlt,2.7);
\node[right] at (\dlt,2.5) {directrix $\ \delta=\lambda/e$};

\draw[densely dotted, thick, black!75] (0,0) circle (1.68);
\node[black!75, font=\footnotesize] at (-1.62,-1.62) {$\partial D_R$};

\draw[conic,blue!60!black]
  plot[domain=-130:130,samples=300,variable=\t]
  ({1.2/(1+cos(\t))*cos(\t)},{1.2/(1+cos(\t))*sin(\t)});
\node[blue!60!black] at (-2.35,2.0) {$\mathcal{B}$};

\draw[conic,orange!85!black]
  plot[domain=0:360,samples=240,variable=\t]
  ({0.7/(1+0.58333*cos(\t))*cos(\t)},{0.7/(1+0.58333*cos(\t))*sin(\t)});
\node[orange!60!black] at (-0.5,0.45) {$f_R(\mathcal{B})$};

\draw[orange!85!black,fill=white] (-1.68,0) circle (1.8pt);
\node[orange!85!black, font=\footnotesize, anchor=north east] at (-1.74,-0.10) {removed vertex};

\foreach \t in {0,40,80,115,245,280,320}{
  \pgfmathsetmacro{\ro}{1.2/(1+cos(\t))}
  \pgfmathsetmacro{\ri}{0.7/(1+0.58333*cos(\t))}
  \draw[pull] ({\ro*cos(\t)},{\ro*sin(\t)}) --
              ({\ri*cos(\t)},{\ri*sin(\t)});
}
\end{tikzpicture}
\caption{The Confocal--Codirectrix Theorem in the parabolic case ($e=1$). The focus
$O$ and the directrix (dashed) are shared by the parabola $\mathcal{B}$ and its
image. The map $f_R$ pulls each point of $\mathcal{B}$ inward along its ray from $O$
(red arrows) onto the carrier ellipse of parameters $(\lambda',e')$, from which
exactly one point is missing: the far vertex (open dot), at distance
$\lambda'/(1-e')=R$ from $O$, where the carrier is internally tangent to
$\partial D_R$ (dotted), as in case~(ii) of Theorem~\ref{thm:confocal}.}
\label{fig:confocal-parabola}
\end{figure}

\begin{figure}[htbp]
\centering
\begin{tikzpicture}[scale=1.25,>=Latex,
   conic/.style={very thick},
   pull/.style={->,red!70!black,thin}]
\def\dlt{0.8}
\draw[->,darkgray] (-3.4,0)--(2.7,0) node[below right]{axis};
\fill (0,0) circle (1.6pt)
  node[right=2pt, font=\footnotesize, xshift=-35pt, yshift=-7pt]{$O$ (focus)};
\draw[thick,dashed] (\dlt,-2.7)--(\dlt,2.7);
\node[right] at (\dlt,2.5) {directrix $\ \delta=\lambda/e$};

\draw[densely dotted, thick, black!75] (0,0) circle (1.68);
\node[black!75, font=\footnotesize] at (1.62,-1.18) {$\partial D_R$};

\draw[conic,blue!60!black]
  plot[domain=-112:112,samples=300,variable=\t]
  ({1.2/(1+1.5*cos(\t))*cos(\t)},{1.2/(1+1.5*cos(\t))*sin(\t)});
\node[blue!60!black] at (-1.55,2.45) {$\mathcal{B}$};

\draw[conic,orange!85!black]
  plot[domain=-131.81:131.81,samples=300,variable=\t]
  ({0.7/(1+0.875*cos(\t))*cos(\t)},{0.7/(1+0.875*cos(\t))*sin(\t)});
\node[orange!60!black] at (-0.28,0.5) {$f_R(\mathcal{B})$};

\draw[orange!85!black,fill=white] (-1.12,1.2517) circle (1.8pt);
\draw[orange!85!black,fill=white] (-1.12,-1.2517) circle (1.8pt);
\node[orange!85!black,font=\footnotesize,anchor=east] at (-1.24,1.42) {limiting endpoints};

\foreach \t in {0,40,80,105,255,280,320}{
  \pgfmathsetmacro{\ro}{1.2/(1+1.5*cos(\t))}
  \pgfmathsetmacro{\ri}{0.7/(1+0.875*cos(\t))}
  \draw[pull] ({\ro*cos(\t)},{\ro*sin(\t)}) --
              ({\ri*cos(\t)},{\ri*sin(\t)});
}
\end{tikzpicture}
\caption{The Confocal--Codirectrix Theorem in the hyperbolic case ($e>1$). The focus
$O$ and the directrix (dashed) are shared by the focus-side branch $\mathcal{B}$ and
its image. The map $f_R$ pulls each point of $\mathcal{B}$ inward along its ray from
$O$ (red arrows) onto the open arc, over the asymptotic angular sector
$(\alpha-\theta^\ast,\alpha+\theta^\ast)$ of $\mathcal{B}$, of the carrier conic
$(\lambda',e')$, which for the parameters drawn is an ellipse ($e'<1$, in accordance with Corollary~\ref{cor:cc-tri}). The
limiting endpoints (open dots) lie on $\partial D_R$ (dotted), in the same directions as the asymptotes
of the original hyperbola branch, and are approached but not attained, as in case~(iii) of
Theorem~\ref{thm:confocal}.}
\label{fig:confocal-hyperbola}
\end{figure}

\begin{proof}
Because $f_R$ is radial, it fixes $\theta=\arg z$, and by
\eqref{eq:recap-mag} it acts on the reciprocal modulus by the constant
shift
\[
  \frac{1}{\rho}\longmapsto \frac{1}{\rho}+\frac{1}{R},
\]
the reciprocal lens identity of Proposition~\ref{prop:lens}. From
\eqref{eq:cc-branch}, a point of
$\mathcal{B}_I(\lambda,e,\alpha)$ at angle $\theta\in I$ has reciprocal
modulus
\[
  \frac{1}{\rho(\theta)}
  =
  \frac{1+e\cos(\theta-\alpha)}{\lambda}
  =
  \frac{1}{\lambda}
  +
  \frac{e}{\lambda}\cos(\theta-\alpha).
\]
Therefore its image, at the same angle, has reciprocal modulus
\[
  \frac{1}{\rho'(\theta)}
  =
  \frac{1}{\rho(\theta)}+\frac{1}{R}
  =
  \underbrace{\Bigl(\frac{1}{\lambda}+\frac{1}{R}\Bigr)}_{=\,1/\lambda'}
  +
  \frac{e}{\lambda}\cos(\theta-\alpha)
  =
  \frac{1}{\lambda'}
  \Bigl(1+\dfrac{e\lambda'}{\lambda}\cos(\theta-\alpha)\Bigr),
\]
with
\[
  \lambda'=\frac{R\lambda}{R+\lambda}.
\]
Hence
\[
  \rho'(\theta)
  =
  \frac{\lambda'}{1+e'\cos(\theta-\alpha)},
  \qquad
  e'=\frac{e\lambda'}{\lambda}
  =
  \frac{Re}{R+\lambda}.
\]
Since $f_R$ is a bijection from each source ray onto its image segment in
$D_R$ and fixes the ray's direction, it maps
$\mathcal{B}_I(\lambda,e,\alpha)$ bijectively onto
$\{\rho'(\theta)e^{i\theta}:\theta\in I\}$.

Because $0<e'<e$, the set of admissible directions can only grow: if
$1+e\cos(\theta-\alpha)>0$ then a fortiori $1+e'\cos(\theta-\alpha)>0$, so
\[
  I\subseteq\Theta(e,\alpha)\subseteq\Theta(e',\alpha).
\]
Hence $1+e'\cos(\theta-\alpha)>0$ throughout $I$, every $\rho'(\theta)$ is
positive and finite, and the image is exactly
$\mathcal{B}_I(\lambda',e',\alpha)$, the focal arc of parameters
$(\lambda',e')$ over the same subarc $I$. This proves \eqref{eq:cc-image}.

The focus--directrix distance of the image is
\[
  \delta'
  =
  \frac{\lambda'}{e'}
  =
  \frac{\lambda}{e}
  =
  \delta,
\]
giving \eqref{eq:cc-transf}; the reciprocal laws are immediate. Finally,
$e'<e$ because $R/(R+\lambda)<1$.

For the three cases, take $I=\Theta(e,\alpha)$. If $e<1$, then
$\Theta(e,\alpha)$ is the whole circle of directions, and since
$e'<e<1$ it equals $\Theta(e',\alpha)$. Hence the image is the full carrier
ellipse $\mathcal{B}(\lambda',e',\alpha)$. Its farthest point from $O$
occurs at $\theta=\alpha+\pi$, where, the denominator
$R+\lambda-Re=R(1-e)+\lambda$ being positive for $e<1$,
\[
  \rho'_{\max}
  =
  \frac{\lambda'}{1-e'}
  =
  \frac{R\lambda}{R+\lambda-Re}<
  R
  \iff
  \lambda<R+\lambda-Re
  \iff
  e<1.
\]
Thus the carrier ellipse lies strictly inside $D_R$, proving (i).

If $e=1$, then the original angular domain is the circle with the single
direction $\alpha+\pi$ removed. The image is therefore the focal arc of
$(\lambda',e')$ over that punctured circle, that is, the carrier ellipse
minus its point at $\theta=\alpha+\pi$. At that point,
\[
  \rho'_{\max}
  =
  \frac{\lambda'}{1-e'}
  =
  R,
\]
obtained by setting $e=1$ in the preceding formula. Thus the carrier
ellipse is internally tangent to $\partial D_R$ and the omitted point is
precisely the tangency point, proving (ii).

If $e>1$, then
\[
  \Theta(e,\alpha)
  =
  (\alpha-\theta^\ast,\alpha+\theta^\ast),
  \qquad
  \theta^\ast=\arccos(-1/e).
\]
As $\theta-\alpha\to\pm\theta^\ast$, one has
$\rho(\theta)\to\infty$. Hence, by Proposition~\ref{prop:lens},
$|f_R|\to R$, while the argument tends to
$\alpha\pm\theta^\ast$. Therefore the image is the open arc of the carrier
conic with limiting endpoints
\[
  R\,e^{i(\alpha\pm\theta^\ast)}\in\partial D_R,
\]
not attained because $f_R(\mathbb C)=D_R$. This proves (iii).
\end{proof}

\begin{remark}[The Self-Directrix Theorem as the limiting member]
Theorem~\ref{thm:directrix} is recovered as the limiting member
$1/e\to0$ of Theorem~\ref{thm:confocal}. A line $\ell$ at distance $d$
from $O$ is the limiting member of the fixed-directrix pencil whose
directrix is itself: $\delta=d$ is fixed while $e\to\infty$, that is,
$1/e=0$ and correspondingly
\[
  \frac{1}{\lambda}
  =
  \frac{1}{e\delta}
  =
  0.
\]
The laws \eqref{eq:cc-transf} are affine, hence continuous, in the
reciprocal coordinate $1/e$, with
\[
  \frac{1}{\lambda}
  =
  \frac{1}{\delta}\frac{1}{e}.
\]
Thus they extend to this limiting member. Evaluating them at
$1/\lambda=1/e=0$ gives at once
\[
  \frac{1}{\lambda'}
  =
  \frac{1}{\lambda}+\frac{1}{R}
  =
  \frac{1}{R},
  \qquad
  \frac{1}{e'}
  =
  \frac{1}{e}+\frac{\delta}{R}
  =
  \frac{d}{R},
\]
that is,
\[
  \lambda'=R,
  \qquad
  e'=\frac{R}{d},
  \qquad
  \delta'=d.
\]
The endpoints $R\,e^{i(\alpha\pm\theta^\ast)}$ of case~(iii) tend, as
$\theta^\ast=\arccos(-1/e)\to\pi/2$, to the two directions parallel to
$\ell$, namely $\pm Ru$ of Proposition~\ref{prop:image-arc}. This
reproduces the Self-Directrix arc (directrix $\ell$, semi-latus rectum
$R$, eccentricity $R/d$) and shows that the self-directrix phenomenon is
not special to lines: every finite focal locus keeps its directrix for
exactly the same reason.
\end{remark}

\begin{corollary}[Trichotomy of the carrier conic]\label{cor:cc-tri}
For a focal locus with focus $O$, directrix distance $\delta$, and
eccentricity $e$, the carrier conic of $f_R(\mathcal{B})$ is an ellipse,
parabola, or hyperbola according to the sign of
\[
  e\,(R-\delta)-R.
\]
In particular it is an ellipse if and only if
\[
  e\,(R-\delta)<R,
\]
which holds for \emph{every} eccentricity $e$ as soon as $\delta\ge R$.
Thus if the directrix lies at distance $\delta\ge R$ from the focus, $f_R$
carries the locus onto an elliptical arc regardless of the original type.
\end{corollary}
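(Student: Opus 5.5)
The plan is to read the type of the carrier conic directly off the image eccentricity $e'$ from Theorem~\ref{thm:confocal}. That theorem already identifies the carrier of $f_R(\mathcal{B})$: it is the classical conic with focus $O$, directrix distance $\delta$, semi-latus rectum $\lambda'=R\lambda/(R+\lambda)$, and eccentricity $e'=Re/(R+\lambda)$. Its type is therefore an ellipse, parabola, or hyperbola according as $e'<1$, $e'=1$, or $e'>1$. Nothing about the image arc itself enters, since the corollary concerns only the carrier.

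The key step is to express the sign of $e'-1$ in terms of $(e,\delta,R)$.

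\begin{itemize}
\item Substitute $\lambda=e\delta$, which gives $e'=Re/(R+e\delta)$.
\item Because $R+e\delta>0$, the sign of $e'-1$ equals the sign of $Re-(R+e\delta)=e(R-\delta)-R$.
\item The same conclusion follows from the reciprocal law \eqref{eq:cc-transf}: $1/e'=1/e+\delta/R$, so $e'<1$ if and only if $1/e+\delta/R>1$. Multiplying by $eR>0$ gives $R+e\delta>eR$, which is the same condition.
\end{itemize}

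This establishes the trichotomy: ellipse if and only if $e(R-\delta)<R$, parabola at equality, hyperbola otherwise.

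For the final clause, suppose $\delta\ge R$. Then $R-\delta\le0$, so $e(R-\delta)\le0<R$ for every $e>0$. Hence the carrier is an ellipse regardless of whether the input was an ellipse, a parabola, or a hyperbola branch.

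There is no real obstacle here. The only point needing care is that the carrier's type depends on $e'$ alone, which follows from the standard focus–directrix classification recalled in Section~\ref{sec:conic-recap}. Since $\lambda'>0$ and $\delta>0$, the carrier is a nondegenerate conic.
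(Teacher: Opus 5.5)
Your proposal is correct and matches the paper's proof: substitute $\lambda=e\delta$ into $e'=Re/(R+\lambda)$ from Theorem~\ref{thm:confocal}, then use $R+e\delta>0$ to get $e'<1\iff e(R-\delta)<R$, with equality and the reverse inequality giving the parabolic and hyperbolic cases. The $\delta\ge R$ clause follows exactly as you say. Your alternative check via $1/e'=1/e+\delta/R$ is the same computation in reciprocal form.
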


\begin{proof}
By \eqref{eq:cc-image},
\[
  e'
  =
  \frac{Re}{R+\lambda}
  =
  \frac{Re}{R+e\delta}.
\]
Therefore
\[
  e'<1
  \iff
  Re<R+e\delta
  \iff
  e(R-\delta)<R.
\]
If $\delta\ge R$, the left-hand side is $\le0<R$ for all $e>0$. The
parabolic and hyperbolic cases are given by equality and by the reverse
inequality, respectively.
\end{proof}

\begin{remark}[Reading the pencil; iteration]
The trichotomy of Corollary~\ref{cor:sweep} is the special case of this
pencil view in which the input is the limiting line member: for fixed $R$,
sweeping the line through distances $d>0$ realizes every eccentricity
$R/d$. A fixed finite focal locus is more rigid: its image eccentricity
\[
  e'=\frac{Re}{R+\lambda}
\]
ranges only over $0<e'<e$, sweeping the lower-eccentricity sub-pencil.
Iterating sharpens this: since $f_R^{\,n}=f_{R/n}$
(Corollary~\ref{cor:iteration}), the $n$-fold image of
$\mathcal{B}(\lambda,e,\alpha)$ is the focal arc of parameters
\[
  \lambda_n=\frac{R\lambda}{R+n\lambda},
  \qquad
  e_n=\frac{Re}{R+n\lambda},
\]
over the original angular domain, with
\[
  \delta_n=\frac{\lambda_n}{e_n}=\delta
\]
fixed. Both $\lambda_n$ and $e_n$ are $O(1/n)$, so the carrier conics
become asymptotically circular while collapsing to the focus $O$: a
fixed focus and fixed directrix, ever rounder and smaller. The limit is the
point $O$, not a nondegenerate circle.
\end{remark}

\begin{remark}[Why the branch, and the one-translation view]
For $e>1$, the polar formula \eqref{eq:cc-branch} with $\rho>0$ describes
only the branch on the focus side of the directrix. The far branch satisfies
\[
  \frac{1}{\rho}
  =
  -\frac{1}{\lambda}
  +
  \frac{e}{\lambda}\cos(\theta-\alpha),
\]
with the opposite constant term: a point $\rho e^{i\theta}$ on the far side of the directrix has unsigned directrix distance $\rho\cos(\theta-\alpha)-\delta$ instead of $\delta-\rho\cos(\theta-\alpha)$, and the focus--directrix condition $\rho=e\bigl(\rho\cos(\theta-\alpha)-\delta\bigr)$ rearranges, using $\lambda=e\delta$, to the displayed equation. Here $\rho>0$ requires
$e\cos(\theta-\alpha)>1$, so the far branch is parametrized over the
narrower angular sector centered on the axis direction $\alpha$;
geometrically it lies on the other side of the directrix from the
focus-side vertex. Applying the lens identity gives
\[
  \frac{1}{\rho'}
  =
  \Bigl(\frac{1}{R}-\frac{1}{\lambda}\Bigr)
  +
  \frac{e}{\lambda}\cos(\theta-\alpha),
\]
a different sign law, so the far branch must be treated separately. Its
image is, of course, still bounded and lies in $D_R$, since
\[
  \frac{1}{\rho'}=\frac{1}{\rho}+\frac{1}{R}>\frac{1}{R}
\]
there.

In the special case $\lambda=R$, the constant term vanishes, leaving
\[
  \rho'\cos(\theta-\alpha)=\frac{R}{e}=\delta.
\]
Thus the far-branch image maps homeomorphically onto the open chord of the directrix
lying inside $D_R$, with its endpoints on $\partial D_R$ omitted, not
onto the entire directrix line.

A homeomorphism cannot identify a two-component hyperbola with a
one-component ellipse, so no full-conic version of case~(iii) could hold.
Finally, the two reciprocal laws of \eqref{eq:cc-transf} are one statement
seen along transversal families: along a ray, $1/\rho$ translates by
$1/R$, while across the fixed-directrix pencil, $1/e$ translates by
$\delta/R$. Both are translations because, in the inverted coordinate of
Proposition~\ref{prop:inversion},
\[
  f_R=\iota\circ\tau_R\circ\iota
\]
acts as the rigid radial shift $\tau_R$. The present theorem is that shift
carrying the inverse of one focal locus (a lima\c{c}on; in the limiting
line case, a circle through the origin) to the inverse of another.
\end{remark}

\subsection{The focal-chord harmonic mean: Askwith's theorem and a chord-level view}\label{sec:focal-chord}

The parameter laws \eqref{eq:cc-transf} admit a second derivation, at the
level of individual focal chords, through a classical theorem recorded by
Askwith \cite[p.~114]{askwith1903}: ``\emph{The semi-latus rectum of a
conic is a harmonic mean between the segments of any focal chord.}'' This
subsection shows that Askwith's theorem is the chord-level shadow of the
lens mechanism of Theorem~\ref{thm:confocal}: the semi-latus rectum, being
a harmonic mean of focal radii, transforms under $f_R$ exactly as the
modulus of a single point does, which yields the law
$1/\lambda'=1/\lambda+1/R$ in one line; and the complementary projective
statement (the harmonic \emph{conjugate} rather than the harmonic
\emph{mean}) accounts for the invariance of the directrix,
$\delta'=\delta$. The two laws of \eqref{eq:cc-transf} thus appear as the
even and odd parts of one reciprocal-radius decomposition along focal
chords. Throughout, $\mathrm{H}(a,b):=2ab/(a+b)$ denotes the harmonic
mean.

\paragraph{Askwith's theorem in the reciprocal coordinate.}
In the reciprocal coordinate $u=1/\rho$ the theorem is visible at a
glance. A focal line through $O$ in direction $\theta$ meets the carrier
conic of $\mathcal{B}(\lambda,e,\alpha)$ at the two opposite radial
parameters $\theta$ and $\theta+\pi$; the radii are finite and positive
when both endpoints lie on the relevant branch, and are read as signed or
projective radii in the exceptional parabolic and hyperbolic cases (see
the proviso below). By \eqref{eq:cc-recip},
\begin{equation}\label{eq:chord-pm}
  \frac{1}{r_1}=\frac{1}{\lambda}+\frac{\cos(\theta-\alpha)}{\delta},
  \qquad
  \frac{1}{r_2}=\frac{1}{\lambda}-\frac{\cos(\theta-\alpha)}{\delta}.
\end{equation}
Adding annihilates the cosine term:
\begin{equation}\label{eq:askwith}
  \frac{1}{r_1}+\frac{1}{r_2}=\frac{2}{\lambda},
  \qquad\text{i.e.}\qquad
  \lambda=\mathrm{H}(r_1,r_2),
\end{equation}
which is Askwith's theorem. (For $e>1$, both reciprocals in
\eqref{eq:chord-pm} are positive, and both endpoints lie on the
focus-side branch, exactly when $e\,|\cos(\theta-\alpha)|<1$; for a
chord meeting the two branches the same computation holds with signed
radii, the far-branch segment counted negatively, as in the signed
focal polar form of the remark completing the arc in
Section~\ref{sec:image-arc}. This proviso does not affect what follows,
which uses only \eqref{eq:chord-pm}.) In this coordinate the two
parameters of the locus are separated by parity in the reversal
$\theta\mapsto\theta+\pi$:
\begin{equation}\label{eq:parity}
  \underbrace{\tfrac12\Bigl(\tfrac{1}{r_1}+\tfrac{1}{r_2}\Bigr)}_{\text{even part}}
  =\frac{1}{\lambda}=\frac{1}{\mathrm{H}(r_1,r_2)},
  \qquad\qquad
  \underbrace{\tfrac12\Bigl(\tfrac{1}{r_1}-\tfrac{1}{r_2}\Bigr)}_{\text{odd part}}
  =\frac{\cos(\theta-\alpha)}{\delta}.
\end{equation}
Askwith's theorem says the even part is the constant $1/\lambda$, so the
semi-latus rectum can be read off from \emph{any} focal chord, not only
from the latus rectum itself.

\begin{lemma}[Harmonic-mean equivariance]\label{lem:hm-equiv}
Let $\sigma(\rho)=R\rho/(R+\rho)$ be the radial action of $f_R$. Then for
all $r_1,r_2>0$,
\[
  \mathrm{H}\bigl(\sigma(r_1),\sigma(r_2)\bigr)
  =\sigma\bigl(\mathrm{H}(r_1,r_2)\bigr).
\]
\end{lemma}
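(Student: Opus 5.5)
The plan is to pass to the reciprocal coordinate, where both the harmonic mean and the radial action become affine, so that the identity reduces to a one-line computation. Write $u=1/\rho$ for $\rho>0$. By Proposition~\ref{prop:lens}, the radial action satisfies
\[
  \frac{1}{\sigma(\rho)}=\frac{1}{\rho}+\frac{1}{R};
\]
in other words, $\sigma$ is conjugate, under the inversion $\rho\mapsto1/\rho$, to the translation $u\mapsto u+1/R$. The harmonic mean is conjugate, under the same inversion, to the arithmetic mean:
\[
  \frac{1}{\mathrm{H}(a,b)}=\frac{a+b}{2ab}=\frac12\Bigl(\frac1a+\frac1b\Bigr).
\]
The lemma then amounts to the statement that translation commutes with the arithmetic mean. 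Indeed, the mean of $u_1+c$ and $u_2+c$ equals the mean of $u_1$ and $u_2$, plus $c$.

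The steps, in order, are as follows. First, for $r_1,r_2>0$, compute the reciprocal of the left-hand side:
\[
  \frac{1}{\mathrm{H}(\sigma(r_1),\sigma(r_2))}
  =\frac12\Bigl(\frac1{\sigma(r_1)}+\frac1{\sigma(r_2)}\Bigr)
  =\frac12\Bigl(\frac1{r_1}+\frac1{r_2}\Bigr)+\frac1R .
\]
Second, compute the reciprocal of the right-hand side using the lens identity applied at the point $\mathrm{H}(r_1,r_2)$:
\[
  \frac{1}{\sigma(\mathrm{H}(r_1,r_2))}=\frac1{\mathrm{H}(r_1,r_2)}+\frac1R=\frac12\Bigl(\frac1{r_1}+\frac1{r_2}\Bigr)+\frac1R .
\]
Third, since both sides are positive and finite, their reciprocals agree, and so the sides themselves agree.

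There is no real obstacle. The only point requiring care is positivity and finiteness, which is needed so that the reciprocals make sense and the inversion is legitimate. Since $r_1,r_2>0$, we have $\sigma(r_i)\in(0,R)$ and $\mathrm{H}(r_1,r_2)>0$, so all four quantities are positive. The lens identity is stated for nonzero arguments, so it applies here; as a scalar identity it also applies to $\sigma$ at any positive argument, including $\mathrm{H}(r_1,r_2)$.

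One may also record a second view, in the spirit of the chord-level discussion. The computation shows more generally that $\sigma$ commutes with every weighted harmonic mean, because every weighted arithmetic mean commutes with translation. The case stated in the lemma is then the one used to transport Askwith's identity \eqref{eq:askwith} from the focal radii $r_1,r_2$ of a chord to their images. This yields $\lambda'=\sigma(\lambda)$, that is, $1/\lambda'=1/\lambda+1/R$.
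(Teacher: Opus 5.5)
Your proof is correct and follows the paper's own argument: the lens identity makes $\sigma$ a translation by $1/R$ in the reciprocal coordinate, the harmonic mean is the arithmetic mean in that coordinate, and translation commutes with averaging. Your explicit check of positivity and finiteness, which justifies taking reciprocals, is a small addition that the paper leaves implicit.
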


\begin{proof}
The lens identity (Proposition~\ref{prop:lens}) says
$1/\sigma(r)=1/r+1/R$, a translation in the reciprocal coordinate; and
the harmonic mean is the arithmetic mean in the reciprocal coordinate.
Translation commutes with averaging:
\[
  \frac{2}{\mathrm{H}(\sigma(r_1),\sigma(r_2))}
  =\frac{1}{r_1}+\frac{1}{r_2}+\frac{2}{R}
  =\frac{2}{\mathrm{H}(r_1,r_2)}+\frac{2}{R}
  =\frac{2}{\sigma(\mathrm{H}(r_1,r_2))}.
  \qedhere
\]
\end{proof}

The identity extends by continuity to $r_i=\infty$ (where
$\sigma(\infty)=R$), and in the signed or projective readings used below
it should be read in the reciprocal coordinate, where it is simply the
affine translation $u\mapsto u+1/R$ commuting with averaging.

\begin{remark}
Equivalently, by the trapezoid construction of
Proposition~\ref{prop:planar},
$\sigma(\rho)=\tfrac12\,\mathrm{H}(\rho,R)$: the radial action of $f_R$
is itself half a harmonic mean, so Lemma~\ref{lem:hm-equiv} is an
instance of the associativity of reciprocal addition.
\end{remark}

\begin{proposition}[The $\lambda$-law via Askwith's theorem]\label{prop:lambda-chord}
Let $\mathcal{B}=\mathcal{B}(\lambda,e,\alpha)$ be a focal polar locus
with focus $O$, and let $\lambda'$ be the semi-latus rectum of the
carrier conic of $f_R(\mathcal{B})$. Then
\[
  \frac{1}{\lambda'}=\frac{1}{\lambda}+\frac{1}{R},
  \qquad\text{i.e.}\qquad
  \lambda'=\frac{R\lambda}{R+\lambda}=\sigma(\lambda),
\]
the first law of \eqref{eq:cc-transf}.
\end{proposition}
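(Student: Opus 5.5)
The plan is to read $\lambda'$ off a single focal chord of the image, using Askwith's theorem \eqref{eq:askwith} twice and the harmonic-mean equivariance Lemma~\ref{lem:hm-equiv} in between.

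First, fix a direction $\theta$ such that both $\theta$ and $\theta+\pi$ lie in $\Theta(e,\alpha)$. The simplest choice is the latus-rectum direction $\theta=\alpha+\tfrac{\pi}{2}$, where $\cos(\theta-\alpha)=0$. Any direction with $e\,|\cos(\theta-\alpha)|<1$ also works; such directions exist for every $e$. Along this focal line the source locus has two positive finite focal radii $r_1,r_2$, given by \eqref{eq:chord-pm}. By \eqref{eq:askwith}, $\lambda=\mathrm{H}(r_1,r_2)$.

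Second, since $f_R$ is radial, the image points are $\sigma(r_1)e^{i\theta}$ and $\sigma(r_2)e^{i(\theta+\pi)}$. These lie on opposite rays of the same line through $O$, so they are the two segments of a focal chord of $f_R(\mathcal{B})$, and hence of its carrier conic.

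Third, apply Askwith's theorem to the carrier conic. This gives $\lambda'=\mathrm{H}(\sigma(r_1),\sigma(r_2))$. Lemma~\ref{lem:hm-equiv} then yields $\lambda'=\sigma(\mathrm{H}(r_1,r_2))=\sigma(\lambda)$. Equivalently, by the lens identity (Proposition~\ref{prop:lens}), $1/\lambda'=1/\lambda+1/R$.

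The main obstacle is justifying the third step without circularity. Askwith's theorem applies only to a genuine conic with focus $O$, so we must know in advance that $f_R(\mathcal{B})$ lies on a conic with focus $O$. We must also know that both image points lie on the relevant branch of that conic, so that both segments count positively.

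For the first point, I would cite the carrier statement of Theorem~\ref{thm:confocal}, using only that the image lies on some focal conic with focus $O$ and not its formula for $\lambda'$. Alternatively, I would note directly from \eqref{eq:cc-recip} and the lens shift that $1/\rho'$ is of the form $a+b\cos(\theta-\alpha)$, which is a focal polar conic.

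For the second point, $\sigma(r_i)<R<\infty$ and both image reciprocals are positive, so the signed-radius proviso does not arise. The conclusion is also independent of the chosen chord, because Askwith's theorem makes the even part constant; this gives a built-in consistency check.
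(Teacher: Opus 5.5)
Your proposal is correct and follows the paper's proof: you apply Askwith's theorem \eqref{eq:askwith} to a focal chord on both the source and image sides and link the two by the harmonic-mean equivariance of Lemma~\ref{lem:hm-equiv}, taking from Theorem~\ref{thm:confocal} that $O$ is a focus of the image carrier, which the paper's proof also assumes tacitly. The only difference is cosmetic: you restrict to a chord with $e\,|\cos(\theta-\alpha)|<1$ (e.g.\ the latus rectum), so both endpoints lie on the actual locus and no signed radii arise, whereas the paper allows an arbitrary focal chord and treats the exceptional ones by signed/projective continuation.
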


\begin{proof}
Because $f_R$ is radial and fixes $O$, it acts on each focal chord in the
reciprocal radial coordinate. More precisely, it sends the two
signed/projective radial parameters of a focal chord of the carrier conic of
$\mathcal{B}$, along the opposite directions $\theta$ and $\theta+\pi$, to the
corresponding parameters of a focal chord of the carrier conic of
$f_R(\mathcal{B})$, with values $\sigma(r_1)$ and $\sigma(r_2)$.
When both endpoints lie on the actual locus $\mathcal{B}$, this is the
ordinary forward image of the chord endpoints; otherwise it is the
projective continuation described after Lemma~\ref{lem:hm-equiv}.
By Askwith's theorem \eqref{eq:askwith}, applied on both sides, and
Lemma~\ref{lem:hm-equiv},
\[
  \lambda'
  =\mathrm{H}\bigl(\sigma(r_1),\sigma(r_2)\bigr)
  =\sigma\bigl(\mathrm{H}(r_1,r_2)\bigr)
  =\sigma(\lambda).
  \qedhere
\]
\end{proof}

In words: the semi-latus rectum transforms under $f_R$ exactly as if it
were the modulus of a single point, because it is a harmonic mean of
moduli and $f_R$ respects harmonic means. No polar equation is needed.
By the remark after Lemma~\ref{lem:hm-equiv}, one may also say: the image
semi-latus rectum is half the harmonic mean of the input semi-latus
rectum and the cone radius, $\lambda'=\tfrac12\,\mathrm{H}(\lambda,R)$.

The complementary half of Theorem~\ref{thm:confocal},
$\delta'=\delta$, is carried by the odd part of \eqref{eq:parity}, which
the reciprocal translation $u\mapsto u+1/R$ leaves untouched: subtracting
the two relations \eqref{eq:chord-pm} eliminates the constant term, and
the lens shift changes only the constant term. This too has a classical
projective reading, matching the harmonic-range result of
Proposition~\ref{prop:harmonic} below.

\begin{proposition}[Directrix point as harmonic conjugate]\label{prop:directrix-conj}
Let $P_1P_2$ be a focal chord of a conic with focus $O$ and corresponding
directrix $\ell$, and let $Q$ be the intersection of the chord line with
$\ell$, with $Q$ understood projectively (a point at infinity) when the
chord is parallel to $\ell$, i.e.\ when $\cos(\theta-\alpha)=0$. Then $(P_1,P_2;\,O,Q)=-1$: the focus and the directrix point are
harmonic conjugates with respect to the chord endpoints. In signed
coordinates along the chord
($P_1\leftrightarrow r_1$, $P_2\leftrightarrow -r_2$,
$O\leftrightarrow 0$), the conjugate point sits at
\[
  q=\frac{2r_1r_2}{r_2-r_1}=\frac{\delta}{\cos(\theta-\alpha)},
\]
the signed distance from $O$ to $\ell$ measured along the chord; the odd
part of \eqref{eq:parity} thus locates the directrix. See
Figure~\ref{fig:directrix-conj}.
\end{proposition}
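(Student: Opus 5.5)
Place coordinates on the chord line through $O$, oriented along the direction $\theta$, so that $O\leftrightarrow0$, $P_1\leftrightarrow r_1$, $P_2\leftrightarrow-r_2$ (the radius in direction $\theta+\pi$ counted negatively). The plan has three steps: locate $Q$ in this coordinate, compare it with \eqref{eq:chord-pm}, and then verify the cross-ratio.

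First, locate $Q$. A point $t\,e^{i\theta}$ lies on $\ell_{\alpha,\delta}$ exactly when $\operatorname{Re}(e^{-i\alpha}te^{i\theta})=t\cos(\theta-\alpha)=\delta$. This gives $q=\delta/\cos(\theta-\alpha)$, and when $\cos(\theta-\alpha)=0$ it gives $q=\infty$, the projective point at infinity.

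Second, express $q$ through the chord. Subtracting the two relations \eqref{eq:chord-pm} gives $\tfrac1{r_1}-\tfrac1{r_2}=\tfrac{2\cos(\theta-\alpha)}{\delta}$, which is the odd part of \eqref{eq:parity}. Therefore
\[
q=\frac{\delta}{\cos(\theta-\alpha)}=\frac{2}{1/r_1-1/r_2}=\frac{2r_1r_2}{r_2-r_1}.
\]
This identity holds for signed radii in the same sense as the proviso after \eqref{eq:askwith}. In the parallel case $r_1=r_2=\lambda$, both sides are $\infty$.

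Third, verify harmonicity. With the convention $(a,b;c,d)=\frac{(c-a)(d-b)}{(c-b)(d-a)}$, put $a=r_1$, $b=-r_2$, $c=0$, $d=q$. Then
\[
(P_1,P_2;O,Q)=\frac{(-r_1)(q+r_2)}{r_2\,(q-r_1)}.
\]
Substituting $q=2r_1r_2/(r_2-r_1)$ gives $q+r_2=r_2(r_1+r_2)/(r_2-r_1)$ and $q-r_1=r_1(r_1+r_2)/(r_2-r_1)$. The ratio is then $-1$.

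A cleaner route is to use the fact that $c=0$. In that case $(a,b;0,d)=-1$ is equivalent to $\tfrac1a+\tfrac1b=\tfrac2d$, the standard harmonic-conjugate-of-the-origin criterion. Here this reads $\tfrac1{r_1}-\tfrac1{r_2}=\tfrac2q$, which is exactly the odd-part identity already established. I would present this version, since it shows directly that the odd part of \eqref{eq:parity} locates the directrix.

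When $q=\infty$, the condition becomes $a+b=2c$, i.e.\ $O$ is the midpoint of $P_1P_2$. This holds because $r_1=r_2$ there.

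The main obstacle is handling the signs and the degenerate configurations consistently. These are the parallel chord ($Q$ at infinity), the chords of a hyperbola that meet both branches, and the parabola's direction $\alpha+\pi$ where a radius is infinite. I would treat all of them in the reciprocal coordinate $u=1/\rho$, where every relation is affine. There the claim is simply that $u_Q$ is the average of the signed reciprocals $1/r_1$ and $-1/r_2$, which extends continuously to all these cases, including $u=0$ for points at infinity.
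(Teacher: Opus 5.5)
Your proposal is correct and is essentially the paper's argument. The paper likewise characterizes the harmonic conjugate of $O$ by the reciprocal criterion $2/q=1/r_1+1/(-r_2)$, then uses the odd part of \eqref{eq:chord-pm} to identify $q$ with the signed distance $\delta/\cos(\theta-\alpha)$ from $O$ to $\ell_{\alpha,\delta}$ along the chord; your explicit cross-ratio substitution and your reciprocal-coordinate treatment of the parallel, two-branch, and parabolic-endpoint cases are sound additions that the paper covers only through its earlier signed/projective proviso.
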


\begin{proof}
The harmonic conjugate of $0$ with respect to $r_1$ and $-r_2$ is
$q=2r_1r_2/(r_2-r_1)$, the point making the cross-ratio $-1$;
equivalently, $2/q=1/r_1+1/(-r_2)$. By \eqref{eq:chord-pm},
\[
  \frac{1}{r_1}-\frac{1}{r_2}=\frac{2\cos(\theta-\alpha)}{\delta}
  \quad\Longrightarrow\quad
  q=\frac{2}{\,1/r_1-1/r_2\,}=\frac{\delta}{\cos(\theta-\alpha)},
\]
which is exactly the signed distance along the chord direction $\theta$
from $O$ to the line
$\ell_{\alpha,\delta}=\{z:\operatorname{Re}(e^{-i\alpha}z)=\delta\}$.
This is the polarity statement that the directrix is the polar of the
focus, specialized to the pencil of chords through $O$. Both ingredients
are classical: the general theorem that the polar of a point meets any
chord through that point in the harmonic conjugate of that point with
respect to the chord's endpoints is standard pole--polar theory
\cite{veblenyoung1910,coxeterPG1987}, and Askwith derives the
focus--directrix correspondence precisely from this harmonic property of
pole and polar \cite[ch.~IX, \S94]{askwith1917}.
\end{proof}

\begin{corollary}[Both laws of \eqref{eq:cc-transf}, by parity]\label{cor:chord-parity}
Along each focal chord, decompose the reciprocal radii
\eqref{eq:chord-pm} into even and odd parts as in \eqref{eq:parity}. The
lens shift $u\mapsto u+1/R$ adds $1/R$ to the even part and leaves the
odd part fixed. Hence, chord by chord:
\begin{itemize}[itemsep=2pt]
\item[(even)] the harmonic \emph{mean} of the segments (the semi-latus
rectum, by Askwith's theorem) obeys $1/\lambda'=1/\lambda+1/R$;
\item[(odd)] the harmonic \emph{conjugate} of the focus (the directrix
point, by Proposition~\ref{prop:directrix-conj}) is unchanged: for each
chord direction,
$1/\sigma(r_1)-1/\sigma(r_2)=1/r_1-1/r_2$, so the conjugate
$q=2/(1/r_1-1/r_2)$ is the same before and after the lens shift; hence
$\delta'=\delta$ and the image conic has the \emph{same} directrix.
\end{itemize}
Together these recover the parameter laws \eqref{eq:cc-transf}, including
$1/e'=\delta/\lambda'=1/e+\delta/R$.
\end{corollary}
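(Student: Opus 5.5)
The plan is to work entirely in the reciprocal radial coordinate $u=1/\rho$ along a fixed focal chord in direction $\theta$, where the corollary becomes bookkeeping. First I record, from \eqref{eq:chord-pm}, that the two endpoints of the focal chord of the carrier conic of $\mathcal{B}(\lambda,e,\alpha)$ have reciprocal (signed, where necessary) radii $u_1=1/\lambda+\cos(\theta-\alpha)/\delta$ and $u_2=1/\lambda-\cos(\theta-\alpha)/\delta$. Their even and odd parts are then read off as in \eqref{eq:parity}. Next I invoke Proposition~\ref{prop:lens}: the radial action $\sigma$ of $f_R$ is $u\mapsto u+1/R$ on each ray. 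Because $f_R$ fixes directions, it maps the chord endpoints in directions $\theta$ and $\theta+\pi$ to points in the same two directions, so the image pair is $(u_1+1/R,\,u_2+1/R)$. Where an endpoint is not on the actual locus, this is the projective continuation discussed after Lemma~\ref{lem:hm-equiv}.

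For the even part, $\tfrac12(u_1+u_2)$ increases by exactly $1/R$. By Askwith's theorem \eqref{eq:askwith}, applied to the image carrier (equivalently, Proposition~\ref{prop:lambda-chord} with Lemma~\ref{lem:hm-equiv}), this even part is $1/\lambda'$. Hence $1/\lambda'=1/\lambda+1/R$.

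For the odd part, $u_1-u_2$ is unchanged, since the translation cancels. Its value is $2\cos(\theta-\alpha)/\delta$. By Proposition~\ref{prop:directrix-conj}, the harmonic conjugate $q=2/(u_1-u_2)=\delta/\cos(\theta-\alpha)$ locates the point where the chord meets the directrix. Since $q$ is the same before and after the shift, for every direction $\theta$ with $\cos(\theta-\alpha)\neq0$, the image conic's directrix meets every focal line at the same point as $\ell_{\alpha,\delta}$. Two lines not through $O$ that meet all (or two non-parallel) lines through $O$ at the same points coincide, so $\delta'=\delta$ and the axis direction is preserved. Combining, $1/e'=\delta'/\lambda'=\delta/\lambda'=\delta/\lambda+\delta/R=1/e+\delta/R$, which recovers \eqref{eq:cc-transf}.

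The main obstacle is justifying that the image chord pair really consists of the chord endpoints of a conic of the pencil. Otherwise Askwith's theorem and Proposition~\ref{prop:directrix-conj} cannot be applied on the image side. I would handle this by noting that the shifted reciprocal radii again have the form $1/\lambda'\pm\cos(\theta-\alpha)/\delta$ for every $\theta$. This is itself the focal polar form \eqref{eq:cc-recip} of a conic with parameters $(\lambda',\delta)$, which is the carrier identified in Theorem~\ref{thm:confocal}. With that, the chord-level theorems apply verbatim. The signed-radius provisos in the hyperbolic and parabolic cases are covered by working in the affine coordinate $u$ throughout.
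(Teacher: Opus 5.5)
Your proposal is correct and follows the paper's own argument. The even part shifts by $1/R$, which gives $1/\lambda'=1/\lambda+1/R$ via Askwith's theorem and Lemma~\ref{lem:hm-equiv}; the odd part, and hence the harmonic conjugate $q$ of Proposition~\ref{prop:directrix-conj}, is unchanged, which gives $\delta'=\delta$ and then $1/e'=1/e+\delta/R$. Your two additions only make explicit what the paper leaves implicit: that equal conjugates $q$ on two non-parallel focal lines determine the directrix as a line, and that the image carrier belongs to the pencil by Theorem~\ref{thm:confocal}, which the paper also presupposes in Proposition~\ref{prop:lambda-chord}.
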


\begin{figure}[htbp]
\centering
\begin{tikzpicture}[scale=1.25,>=Latex,line cap=round,line join=round,font=\small,
   conic/.style={very thick}]
\def\dlt{2}
\draw[->,darkgray] (-3.5,0)--(2.9,0) node[below right]{axis};
\draw[thick,dashed] (\dlt,-2.4)--(\dlt,2.4);
\node[right] at (\dlt,2.2) {directrix $\ell$,\ $\delta=\lambda/e$};
\draw[conic,blue!60!black]
  plot[domain=0:360,samples=240,variable=\t]
  ({1.2/(1+0.6*cos(\t))*cos(\t)},{1.2/(1+0.6*cos(\t))*sin(\t)});
\node[blue!60!black] at (-2.8,1.2) {$\mathcal{B}$};
\coordinate (O)  at (0,0);
\coordinate (P1) at (0.6591,0.4615);
\coordinate (P2) at (-1.9330,-1.3535);
\coordinate (Q)  at (2,1.4004);
\coordinate (A)  at (-2.2197,-1.5542);   
\coordinate (B)  at (2.2867,1.6012);     
\draw[thick] (A) -- (B);
\draw[darkgray] (-0.6,0) arc[start angle=180,end angle=215,radius=0.6];
\node[darkgray,font=\footnotesize] at (-0.92,-0.28) {$\theta$};
\path (O)  -- (P1) node[midway,sloped,above=1pt,font=\footnotesize]{$r_1$};
\path (P2) -- (O)  node[pos=0.35,sloped,above=1pt,font=\footnotesize]{$r_2$};
\draw[decorate,decoration={brace,mirror,amplitude=4pt,raise=7pt},gray!50!black]
  (O) -- (Q) node[midway,sloped,below=13pt]{$q=\delta/\cos\theta$};
\fill[blue!60!black] (P1) circle (1.8pt);
\fill[blue!60!black] (P2) circle (1.8pt);
\fill[red!70!black]  (O)  circle (1.8pt);
\fill[red!70!black]  (Q)  circle (1.8pt);
\node[blue!60!black,above right=1pt] at (P1) {$P_1$};
\node[blue!60!black,below=3pt] at (P2) {$P_2$};
\node[red!70!black,font=\footnotesize,above left=2pt] at (O) {$O$ (focus)};
\node[red!70!black,above=4pt,xshift=-9pt] at (Q) {$Q$};
\node[anchor=south west,align=left] at (-3.5,1.7)
  {$(P_1,P_2;\,O,Q)=-1$\\[3pt]
   $q=\dfrac{2r_1r_2}{r_2-r_1}=\dfrac{\delta}{\cos(\theta-\alpha)}$};
\end{tikzpicture}
\caption{The directrix point as harmonic conjugate
(Proposition~\ref{prop:directrix-conj}), drawn with $\alpha=0$ in the
elliptic case. A focal chord of $\mathcal{B}$ in direction $\theta$ meets
the conic at $P_1$ and $P_2$, with focal radii $r_1$ and $r_2$, and its
line meets the directrix $\ell$ at $Q$ (a point at infinity when the chord
is parallel to $\ell$); the angle $\theta$ is marked by the equal vertical
angle on the $P_2$-side for visual clarity. The four collinear points, in
the order $P_2$, $O$, $P_1$, $Q$, form a harmonic range: the focus $O$
divides the chord internally and the directrix point $Q$ externally, in
the same ratio, so $(P_1,P_2;\,O,Q)=-1$. The signed distance
$q=2/(1/r_1-1/r_2)=\delta/\cos(\theta-\alpha)$ is determined by the odd
part of the reciprocal-radius decomposition \eqref{eq:parity}; since the
lens shift $u\mapsto u+1/R$ preserves that odd part, $Q$, and with it the
directrix, is unchanged by $f_R$ (Corollary~\ref{cor:chord-parity}).}
\label{fig:directrix-conj}
\end{figure}

\begin{remark}[Relation to the harmonic range on rays]
Proposition~\ref{prop:harmonic} below, the harmonic range
$\bigl(O,p;\,f_R(p),f_R^{-1}(p)\bigr)=-1$ on each ray, is the same
algebra at a single point: it unpacks to
$|p|=\mathrm{H}\bigl(|f_R(p)|,\,|f_R^{-1}(p)|\bigr)$, since the forward
and backward maps perturb $1/|p|$ symmetrically by $\pm 1/R$.
Structurally this is identical to a focal chord's symmetric perturbation
of $1/\lambda$ by $\pm\cos(\theta-\alpha)/\delta$ in
\eqref{eq:chord-pm}: harmonic ranges are exactly symmetric pairs in a
reciprocal coordinate, and both phenomena are instances of that one
fact.
\end{remark}

\begin{remark}[The Self-Directrix Theorem, chord by chord]
A line $\ell$ not through $O$ is the $\lambda=\infty$ member of its
fixed-directrix pencil. In the signed focal polar form
$1/\rho=\cos\theta/d$ of the remark in Section~\ref{sec:image-arc}
(coordinates with $\ell=\{x=d\}$), each full line through $O$ meets
$\ell$ once, represented twice with opposite signed radii:
$1/\rho(\theta)+1/\rho(\theta+\pi)=0$. The ``harmonic mean of the chord
segments'' is infinite, consistent with $\lambda=\infty$. Applying the
lens shift to both reciprocals gives
\[
  \frac{1}{r_1'}+\frac{1}{r_2'}=\frac{2}{R},
\]
so \emph{every} focal chord of the image conic has harmonic mean exactly
$R$. By Askwith's theorem the image has semi-latus rectum $R$: the fixed
latus rectum of Theorem~\ref{thm:directrix}, verified chord by chord,
appears as the lens-shifted image of the trivial cancellation of a
line's signed reciprocal intersections.
Proposition~\ref{prop:lambda-chord} degenerates correspondingly:
$\lambda'=\sigma(\infty)=R$. At the opposite formal limit $e=0$ (outside
the finite-directrix focus--directrix setting, since $\delta=\lambda/e$
is then infinite), every focal
chord of a circle centered at $O$ is a diameter with equal segments, the
harmonic mean is the radius itself, and
Proposition~\ref{prop:lambda-chord} reads
$\lambda'=\sigma(\rho)=R\rho/(R+\rho)$: the concentric-circle law of
Corollary~\ref{cor:circles} below is the $e=0$ instance of the
Confocal--Codirectrix parameter law seen through the harmonic mean.
\end{remark}

\section{The image of a circle}\label{sec:circle-image}

The Self-Directrix Theorem turns lines into conics; it is natural to ask about circles. The
answer is a trichotomy. A circle centered at $O$ stays a circle (Corollary~\ref{cor:circles}). A
circle \emph{through} $O$ becomes not a conic but a rational (genus~$0$) \emph{circular quartic},
the inverse of one branch of a conchoid of Nicomedes~\cite{lawrence1972} (Proposition~\ref{prop:circle-O},
Figure~\ref{fig:image-circle}). Every \emph{other} circle, neither centered at $O$ nor through
it, maps to a smooth closed curve lying on a circular quartic whose normalization has genus~$1$
(Proposition~\ref{prop:general-circle}, Figure~\ref{fig:general-circle}), of which the first two
cases are the genus-$0$ degenerations.

The genus statements below are meant in the algebraic-geometric sense: the genus is the genus of the
nonsingular projective model, equivalently of the normalization of the plane algebraic carrier. For
background on geometric genus, normalization of singular plane curves, the degree--genus formula, and
branched covers of $\mathbb{P}^1$, see Kirwan~\cite{Kirwan1992}. The computations needed here are
elementary once the special equations arising from $f_R$ have been written down: the
circle-through-$O$ case is rational, while the generic case is birational to a double cover
$\sigma^2=q_4(\rho)$ with $q_4$ squarefree of degree four. Throughout this section the \emph{carrier} of
an image means the full plane algebraic curve containing it; the actual image is in general only a real
arc or loop on its carrier, which may in addition carry singular or complex points absent from the image.

\subsection{Circles centered at the origin}

\begin{corollary}[Concentric circles gain curvature $1/R$]\label{cor:circles}
$f_R$ carries the circle $|z|=\rho$ to the circle $|w|=R\rho/(R+\rho)$, and the curvatures
(reciprocal radii, i.e.\ $1/\mathrm{radius}$, not signed or Euclidean curvature) of these concentric circles satisfy $1/|w|=1/|z|+1/R$: every circle
centered at $O$ has its curvature increased by exactly $1/R$, the cone curvature. The nested
family of all origin-centered circles is mapped into the same concentric family, with radii compressed into $(0,R)$.
\end{corollary}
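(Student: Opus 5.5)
The plan is to read the statement directly off the radial structure established in Proposition~\ref{prop:p10} and the lens identity of Proposition~\ref{prop:lens}. Since $f_R(z)=\frac{R}{R+|z|}\,z$ multiplies each nonzero $z$ by a positive real factor, the argument of $z$ is unchanged and only the modulus is rescaled. For a point $z=\rho e^{i\theta}$ on the circle $|z|=\rho$ with $\rho>0$, equation~\eqref{eq:recap-mag} gives $|f_R(z)|=R\rho/(R+\rho)$. This value depends only on $\rho$, so the image of the circle lies on the circle $|w|=R\rho/(R+\rho)$.

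To get equality of sets, not just inclusion, I will use surjectivity on that circle. Take any $w=\rho'e^{i\theta}$ with $\rho'=R\rho/(R+\rho)$. The point $z=\rho e^{i\theta}$ is mapped to $w$ because $f_R$ preserves the direction $\theta$. Alternatively, one can apply the inverse $f_R^{-1}(w)=w/(1-|w|/R)$ from Section~\ref{sec:setup} and check that its modulus is exactly $\rho$. Either way, $f_R$ restricts to a bijection between the two circles; in the angular coordinate it is the identity.

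The curvature statement is then Proposition~\ref{prop:lens} applied at any point of the circle: $1/|w|=1/\rho+1/R$. Since reciprocal radius is what the statement calls curvature, every origin-centred circle has its curvature increased by exactly $\kappa(f_R)=1/R$.

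For the last sentence I will note three facts. First, $\rho\mapsto R\rho/(R+\rho)$ is strictly increasing from $(0,\infty)$ onto $(0,R)$: its reciprocal $1/\rho+1/R$ is strictly decreasing in $\rho$ and takes every value in $(1/R,\infty)$. Second, the degenerate circle $\rho=0$ is $\{O\}$, which is fixed. Third, as a consequence, the concentric family is carried into itself and the nesting order is preserved, with image radii filling exactly $(0,R)$.

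There is no real obstacle here. The only point needing care is surjectivity onto the image circle, and that is immediate from the ray-preserving property and the explicit inverse.
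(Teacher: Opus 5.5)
Your proposal is correct and matches the paper's approach. The paper gives no separate proof: it treats the corollary as an immediate consequence of the radial form of $f_R$ (Proposition~\ref{prop:p10}) and the lens identity (Proposition~\ref{prop:lens}), which are exactly your ingredients. Your explicit surjectivity check onto the image circle and the monotonicity of $\rho\mapsto R\rho/(R+\rho)$ onto $(0,R)$ just spell out what the paper leaves implicit.
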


\subsection{Circles through the origin}

\begin{proposition}[Image of a circle through $O$]\label{prop:circle-O}
Let $\mathcal C$ be the circle through $O$ of diameter $D$ with axis along $\alpha$, parametrized by
$\rho=D\cos(\theta-\alpha)$ for $|\theta-\alpha|\le\pi/2$, the two endpoint directions $\theta-\alpha=\pm\pi/2$ both representing the single point $O$. Then
\[
  |f_R(z)|\;=\;R-\frac{R}{1+(D/R)\cos(\theta-\alpha)}\qquad(z\in\mathcal C),
\]
so the \emph{radial deficit} $R-|f_R(z)|$, regarded as a polar radius in the direction $\theta$,
is the focal radius of a conic with focus $O$, axis $\alpha$, semi-latus rectum $R$, and
eccentricity $D/R$ (an \emph{auxiliary} focal-polar curve, not the image $f_R(\mathcal C)$ itself), taken on the angular interval $\cos(\theta-\alpha)\ge0$ swept by $\mathcal C$. In Cartesian coordinates, taking $\alpha=0$, the image lies on the rational quartic
\begin{equation}\label{eq:circimg}
  D^2x^2(x^2+y^2)=R^2\bigl(Dx-(x^2+y^2)\bigr)^2 ,
\end{equation}
a \emph{circular} quartic (it passes through each of the two circular points at infinity with multiplicity one; a quartic is \emph{bicircular} when both circular points occur with multiplicity two) but
not a \emph{bicircular} one, so it is not one of the standard bicircular quartics such as a
lima\c{c}on, Cassini oval, or Cartesian oval; moreover the selected image is not a conic. This quartic is the \emph{algebraic carrier} of the image, not the image
set itself: clearing the radical in $Dx\sqrt{x^2+y^2}=R\bigl(Dx-(x^2+y^2)\bigr)$ by squaring also admits
the opposite sign. The image proper is the real loop selected by the unsquared
relation, equivalently by
\[
  r=|w|=\frac{DR\cos(\theta-\alpha)}{R+D\cos(\theta-\alpha)},\qquad \cos(\theta-\alpha)\ge0,
\]
a bounded closed $C^2$ loop through $O$, the inverse of the \emph{outer} conchoid branch of Figure~\ref{fig:conchoid-inversion}, while
the opposite sign gives, in signed polar form, the inverse of the \emph{inner} branch,
$r=DR\cos(\theta-\alpha)/(R-D\cos(\theta-\alpha))$. In the swept interval $\cos(\theta-\alpha)\ge0$ the
denominator vanishes and changes sign for $D>R$ (so $r$ must be read there as a signed radius), while
for $D=R$ it vanishes only at $\theta=\alpha$, without changing sign; thus for $D\ge R$ this branch has
a pole, and for $D<R$ it remains bounded. Equivalently, in the
inversion picture $f_R=\iota\circ\tau_R\circ\iota$ of Proposition~\ref{prop:inversion}, $\iota(\mathcal C)$ is
a line, $\tau_R$ sends that line to one branch of a conchoid of Nicomedes (the line offset by the constant
$1/R$ along rays from $O$), and $f_R(\mathcal C)$ is the inverse of that branch.
\end{proposition}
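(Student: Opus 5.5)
The plan is to work throughout in polar coordinates centred at $O$, using that $f_R$ fixes directions and acts on moduli by $|f_R(z)|=R\rho/(R+\rho)$ (Proposition~\ref{prop:p10}). By rotation equivariance I will take $\alpha=0$.

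\textbf{Radial formula and deficit.} For $z=\rho e^{i\theta}\in\mathcal C$ with $\rho=D\cos\theta$ and $\cos\theta\ge0$, substitution gives
\[
  r=\frac{DR\cos\theta}{R+D\cos\theta}=R-\frac{R^2}{R+D\cos\theta}=R-\frac{R}{1+(D/R)\cos\theta}.
\]
The deficit $R-r$ therefore has exactly the focal polar form \eqref{eq:recap-polar}, with $\lambda=R$ and $e=D/R$. Since $R+D\cos\theta>0$ on the swept interval, the radius $r$ is finite and nonnegative there, vanishing only at $\theta=\pm\pi/2$, i.e.\ at $O$. Hence the image is a closed loop through $O$, bounded by $DR/(R+D)<R$.

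\textbf{Cartesian carrier.} From $r(R+D\cos\theta)=DR\cos\theta$, multiply by $r$ and use $x=r\cos\theta$, $r^2=x^2+y^2$:
\[
  Rr^2+Dx\,r=DRx,\qquad\text{i.e.}\qquad Dx\sqrt{x^2+y^2}=R\bigl(Dx-(x^2+y^2)\bigr).
\]
Squaring yields \eqref{eq:circimg}. Squaring also admits the opposite sign $-Dxr=R(Dx-r^2)$, which in polar form reads $r=DR\cos\theta/(R-D\cos\theta)$. I will then do the denominator bookkeeping for this second branch: $R-D\cos\theta$ vanishes on $\cos\theta\ge0$ iff $D\ge R$, changes sign iff $D>R$, and for $D=R$ vanishes only at $\theta=0$.

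\textbf{Circularity but not bicircularity.} Homogenize \eqref{eq:circimg} with $Z$ and write $Q=x^2+y^2$:
\[
  D^2x^2Q=R^2\bigl(DxZ-Q\bigr)^2 .
\]
The quartic part is $D^2x^2Q-R^2Q^2=Q\,(D^2x^2-R^2Q)$. It is divisible by $Q$ but not by $Q^2$, because $D^2x^2-R^2Q$ does not vanish at $(1:\pm i:0)$: there $Q=0$ while $x=1$. So the curve meets the line at infinity at each circular point with intersection multiplicity one there. I will confirm multiplicity one by checking that the gradient at $(1:\pm i:0)$ is nonzero, which makes these smooth points; hence the curve is not bicircular. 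The standard bicircular quartics (lima\c{c}on, Cassini oval, Cartesian oval) are thereby excluded.

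\textbf{Not a conic; rationality; inversion picture.} To show the image is not a conic, note that the arc determines its carrier. Any conic containing the real loop would share infinitely many points with the quartic. I will check that \eqref{eq:circimg} is irreducible, say by showing that no conic factor is compatible with its behaviour at $O$ and at infinity. This factorization check is the step I expect to be the main obstacle; a clean alternative is to use that the radial parametrization is rational in $\tan(\theta/2)$, giving a birational, genus-$0$ irreducible curve of degree $4$. Rationality follows from that same parametrization. $C^2$ (indeed real-analytic) smoothness of the loop away from $O$ is immediate from the analytic polar formula. At $O$ I would check that the two ends $\theta\to\pm\pi/2$ close up with a common tangent, using $r\approx D\cos\theta$.

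The inversion description comes from Proposition~\ref{prop:inversion} ($f_R=\iota\circ\tau_R\circ\iota$). The inverse $\iota(\mathcal C)$ is the line $x=1/D$. The shift $\tau_R$ adds $1/R$ to the modulus along rays, turning that line into one branch of a conchoid of Nicomedes, namely the outer one. The opposite sign corresponds to subtracting, i.e.\ to the inner branch, matching the second polar formula.
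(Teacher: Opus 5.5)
Your computations of the deficit, the unsquared relation $Dx\,r=R(Dx-r^2)$ and its square, the quartic part $Q\,(D^2x^2-R^2Q)$, the opposite-sign bookkeeping, and the conchoid reading of $\iota\circ\tau_R\circ\iota$ all agree with the paper.

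\textbf{The gap: regularity at $O$.} The statement asserts a closed $C^2$ loop. Your plan only checks that the ends $\theta\to\pm\pi/2$ close up with a common tangent. That gives $C^1$ at best; it does not show that the two sides have the same curvature. Knowing $r\approx D\cos\theta$ to leading order is not enough either: you must see where the correction $r-D\cos\theta=-D^2\cos^2\theta/R+\cdots$ enters. Take $u=\cos\theta\ge0$ small. Then $x=Du^2-D^2u^3/R+O(u^4)$ and $|y|=Du-D^2u^2/R+O(u^3)$. Inverting gives $u=|y|/D+y^2/(DR)+O(|y|^3)$, so, as a graph over the tangent line $x=0$,
\[
  x=\frac{y^2}{D}+\frac{|y|^3}{DR}+O(y^4).
\]
Both sides share the term $y^2/D$, so the loop is $C^2$ at $O$. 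The $|y|^3$ term shows it is not $C^3$.

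\textbf{Non-conicity.} This same expansion is the paper's proof that the image is not a conic, and it bypasses what you call your main obstacle. A nondegenerate conic is real-analytic at each of its points, the $|y|^3$ term is not, and no degenerate real conic is a simple closed loop.

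Your Bezout route (an irreducible quartic shares only finitely many points with any conic) is a legitimate alternative. However, your ``clean alternative'' for irreducibility is incomplete as stated. A rational parametrization only shows that the Zariski closure of the loop is an irreducible rational curve \emph{contained in} the quartic, not that it is the whole quartic. To close this, check that the parametrization in $t=\tan(\theta/2)$ has degree four and is generically injective. Injectivity holds because the signed-radius points over $\theta$ and $\theta+\pi$ coincide only where $\cos\theta=0$. Then a generic line meets the image in four distinct points, so the image is the whole quartic.

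With that supplement, the global argument gives irreducibility, rationality of the carrier, and non-conicity together. The local argument is shorter and gives the $C^2$-but-not-$C^3$ regularity in the same step.
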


\begin{proof}
On $\mathcal C$, $\rho=D\cos(\theta-\alpha)$, so by \eqref{eq:mag}
$|f_R(z)|=R\rho/(R+\rho)=R-\frac{R^2}{R+D\cos(\theta-\alpha)}
=R-\frac{R}{1+(D/R)\cos(\theta-\alpha)}$, and the subtracted term is the focal radius
$R/(1+(D/R)\cos(\theta-\alpha))$ of a conic with semi-latus rectum $R$, axis $\alpha$, and
eccentricity $D/R$. For the Cartesian form take $\alpha=0$ and write
$r=|f_R(z)|=DR\cos\theta/(R+D\cos\theta)$ (the image radius, distinct from the input radius
$\rho=|z|$); then $Rr^2=Dx(R-r)$ with $x=r\cos\theta$,
so $Dxr=R\bigl(Dx-(x^2+y^2)\bigr)$, and squaring (using $r^2=x^2+y^2$) gives
\eqref{eq:circimg}, whose quartic part $(x^2+y^2)\bigl(R^2(x^2+y^2)-D^2x^2\bigr)$ has the single
factor $x^2+y^2$ (circular, not bicircular). That $f_R(\mathcal C)$ is not a conic for any finite $D,R>0$ follows from its local form at $O$.
Taking $\alpha=0$ and $u=\cos\theta\ge0$ near $O$, the image is
$x=DRu^2/(R+Du)$ and $|y|=DRu\sqrt{1-u^2}/(R+Du)$; eliminating $u$ shows that the image set itself, as a graph over the vertical
tangent at $O$, has the local equation $x=y^2/D+|y|^3/(DR)+O(y^4)$. The $|y|^3$ term makes the loop
non-analytic at $O$, whereas a nondegenerate conic is analytic at each of its points, and a
degenerate real conic is a union of lines, a double line, a point, or the empty set, none of which is
this simple closed loop; hence $f_R(\mathcal C)$ is not a conic. The same
conclusion is visible in the inversion factorization: $\iota$ carries the circle-through-$O$ to a
line; the middle map $\tau_R(w)=w+\tfrac1R\,w/|w|$ offsets that line by the fixed radial length
$1/R$, the defining construction of the conchoid of Nicomedes (here its outer branch); and applying $\iota$ once more
yields the inverse of that branch, a quartic curve rather than a conic.
\end{proof}

\begin{remark}[Regularity at $O$]
The graph $x=y^2/D+|y|^3/(DR)+O(y^4)$ obtained in the proof shows that the image loop is $C^2$ but
not $C^3$ at $O$: the even term $|y|^3$ has a continuous second derivative but a jump in the third.
The distinction is between carrier and image: the algebraic carrier \eqref{eq:circimg} is
real-analytic along each of its two sign branches, but the image loop is assembled by gluing the two
sides at $O$, where those branches meet, which is exactly why the selected loop is only $C^2$ and
fails to be real-analytic there.
This is the regularity recorded in Figure~\ref{fig:image-circle}.
\end{remark}

\begin{figure}[htbp]
\centering
\begin{tikzpicture}[scale=1.3,>=Latex,line cap=round,line join=round,font=\small,
   circ/.style={very thick,blue!60!black},
   img/.style={very thick,orange!90!black},
   pull/.style={->,red!65!black,thin}]
  \def\Rr{2.0}\def\Dd{3.0}
  \draw[densely dashed,thick,gray!70] (0,0) circle (\Rr);
  \node[gray!75] at (-1.5,-1.42) {$\partial D_R$};
  \draw[->,gray!55] (-0.7,0)--(3.45,0) node[below right]{axis};
  \draw[circ] (\Dd/2,0) circle (\Dd/2);
  \node[blue!60!black,anchor=west] at (1.35,1.66) {$\mathcal C$: circle through $O$, diameter $D$};
  \foreach \th in {-72,-52,-32,-15,15,32,52,72}{
    \pgfmathsetmacro{\rc}{\Dd*cos(\th)}
    \pgfmathsetmacro{\ri}{\Dd*\Rr*cos(\th)/(\Rr+\Dd*cos(\th))}
    \draw[pull] ({\rc*cos(\th)},{\rc*sin(\th)}) -- ({\ri*cos(\th)},{\ri*sin(\th)});
  }
  \draw[img,domain=-89.3:89.3,samples=220,smooth,variable=\t]
     plot ({\Dd*\Rr*cos(\t)*cos(\t)/(\Rr+\Dd*cos(\t))},
           {\Dd*\Rr*cos(\t)*sin(\t)/(\Rr+\Dd*cos(\t))});
  \node[orange!85!black,anchor=west] at (0.30,-1.18) {$f_R(\mathcal C)$ (final image curve)};
  \fill (0,0) circle (1.5pt) node[below left=-1pt]{$O$};
  \fill[blue!60!black] (\Dd,0) circle (1.3pt) node[above right=-2pt]{distance $D$};
  \pgfmathsetmacro{\fr}{\Dd*\Rr/(\Rr+\Dd)}
  \fill[orange!90!black] (\fr,0) circle (1.4pt);
  \node[orange!85!black,anchor=south west,xshift=-1pt] at (\fr,0.05) {$\tfrac{DR}{R+D}$};
\end{tikzpicture}
\caption{The final curve: the image $f_R(\mathcal C)$ (orange) of a circle $\mathcal C$ through
the origin (blue), drawn for $D=\tfrac32R$. Each point of $\mathcal C$ is pulled inward along its
ray from $O$ (red arrows) by the radial law $|f_R|=R-R/\!\left(1+(D/R)\cos\theta\right)$ of
Proposition~\ref{prop:circle-O}. The image is a closed $C^2$ loop through $O$, tangent there to
$\mathcal C$ (both meet the $y$-axis at $O$), with farthest point $DR/(R+D)$ on the axis; it lies
strictly inside $D_R$ and, unlike the image of a \emph{line}, is not a conic but a quartic,
the inverse of the outer branch of the conchoid of Figure~\ref{fig:conchoid}. (At $O$ the loop is $C^2$ but not $C^3$ for
finite $R$, as the local expansion shows; the full quartic \emph{carrier} has a multiplicity-two
point at $O$, through which the selected real loop passes.) The far point of $\mathcal C$, at
distance $D$ and here outside $\partial D_R$, is pulled all the way in to $DR/(R+D)$.}
\label{fig:image-circle}
\end{figure}

\begin{remark}
Proposition~\ref{prop:circle-O} pinpoints why lines are luckier than circles. After the
inner inversion both become lines, but $f_R$ then offsets radially by $1/R$; a line offset
radially is a conchoid, and only when the inner inversion is omitted, i.e. when one starts
from a line rather than a circle, does the offset act on $1/\rho$ as the clean constant
shift that produces a conic.
\end{remark}

\begin{remark}[Does it have a name? And a special case]
We are not aware of a standard proper name for the quartic $f_R(\mathcal C)$; the classically
named curve naturally appearing in the factorization is the intermediate \emph{conchoid of
Nicomedes}, of whose outer branch $f_R(\mathcal C)$ is the inverse. As an algebraic curve the carrier quartic
is \emph{rational} (geometric genus $0$): being the birational-inversion image of the rational
conchoid, it is parametrized rationally by $t=\tan(\theta/2)$ through
$r=DR\cos\theta/(R\pm D\cos\theta)$ (the signed image radius, the two signs giving the two
branches), and its only finite singularity is a multiplicity-two singularity at
$O$, with tangent cone $x^2=0$. In the case $D=R$, where the unused inner conchoid branch acquires a cusp at the pole $O$, the
image takes the memorable form
\[
  |f_R(z)|=\frac{R\cos(\theta-\alpha)}{1+\cos(\theta-\alpha)}
  =R-\frac{R}{1+\cos(\theta-\alpha)},
\]
that is, $R$ minus the focal radius of the parabola with focus $O$, axis $\alpha$, and
semi-latus rectum $R$.
\end{remark}

\begin{figure}[htbp]
\centering
\begin{tikzpicture}[scale=1.2,>=Latex,line cap=round,line join=round,font=\small,
   outer/.style={very thick,blue!60!black},
   inner/.style={very thick,orange!85!black}]
  \def\aa{1.2}\def\bb{1.6}
  \draw[->,gray!50] (-1.15,0)--(3.15,0);
  \draw[->,gray!50] (0,-3.25)--(0,3.35);
  \draw[thick,densely dashed,gray!75] (\aa,-3.2)--(\aa,3.3);
  \node[gray!10!black,anchor=south] at (\aa,3.34) {ruler line $\ell_0$ (asymptote)};
  \draw[outer,domain=-53:53,samples=160,smooth,variable=\x]
     plot ({\aa+\bb*cos(\x)},{\aa*tan(\x)+\bb*sin(\x)});
  \draw[inner,domain=-66:66,samples=260,smooth,variable=\x]
     plot ({\aa-\bb*cos(\x)},{\aa*tan(\x)-\bb*sin(\x)});
  \def\tt{50}
  \pgfmathsetmacro{\Qx}{\aa}\pgfmathsetmacro{\Qy}{\aa*tan(\tt)}
  \pgfmathsetmacro{\Ppx}{\aa+\bb*cos(\tt)}\pgfmathsetmacro{\Ppy}{\aa*tan(\tt)+\bb*sin(\tt)}
  \pgfmathsetmacro{\Pmx}{\aa-\bb*cos(\tt)}\pgfmathsetmacro{\Pmy}{\aa*tan(\tt)-\bb*sin(\tt)}
  \draw[thin,gray!65] (0,0)--(\Ppx,\Ppy);
  \draw[decorate,decoration={brace,amplitude=3pt,raise=1.5pt},blue!55!black]
     (\Qx,\Qy)--(\Ppx,\Ppy) node[midway,above left=0pt]{$b_0$};
  \draw[decorate,decoration={brace,amplitude=3pt,raise=1.5pt},orange!75!black]
     (\Pmx,\Pmy)--(\Qx,\Qy) node[midway,below right=-1pt]{$b_0$};
  \draw[decorate,decoration={brace,amplitude=4pt,mirror,raise=2pt},gray!60]
     (0,0)--(\aa,0) node[midway,below=5pt]{$a_0$};
  \fill (0,0) circle (1.5pt) node[below left=-1pt]{$O$ (pole)};
  \fill (\aa,0) circle (1.0pt) node[above right=-2pt]{$F$};
  \fill (\Qx,\Qy) circle (1.2pt) node[left=1pt]{$Q$};
  \fill[blue!60!black] (\Ppx,\Ppy) circle (1.4pt) node[right=1pt]{$P_{+}$};
  \fill[orange!85!black] (\Pmx,\Pmy) circle (1.4pt) node[right=1pt]{$P_{-}$};
  \node[blue!60!black,anchor=west] at (1.45,-1.75) {$r=a_0\sec\theta+b_0$};
  \node[orange!85!black,anchor=east] at (-0.5,-1.05) {$r=a_0\sec\theta-b_0$};
  \node[orange!70!black,anchor=east,align=center] at (-0.30,0.95){loop\\$(b_0>a_0)$};
\end{tikzpicture}
\caption{The conchoid of Nicomedes, $r=a_0\sec\theta\pm b_0$, generated from a pole $O$, a ruler
line $\ell_0$ at distance $a_0$, and a fixed offset $b_0$: each ray from $O$ meets $\ell_0$ at $Q$,
and the two curve points $P_{\pm}$ lie at distance $b_0$ from $Q$ along that ray. This is exactly
the curve produced inside the factorization $f_R=\iota\circ\tau_R\circ\iota$ of
Proposition~\ref{prop:circle-O}: the radial offset map $\tau_R$ carries the line
$\iota(\mathcal C)$, at distance $a_0=1/D$, to this conchoid with offset $b_0=1/R$, and the image
$f_R(\mathcal C)$ of a circle through $O$ is its inverse $\iota$. The inner branch (orange) has
a loop, a cusp, or no singular point at the pole according as $b_0>a_0$, $b_0=a_0$, or $b_0<a_0$, that is,
as $D>R$, $D=R$, or $D<R$; the looped case $b_0>a_0$ is drawn. Of the two branches, only the outer ($+b_0$) one is inverted to the final oval, as shown in Figure~\ref{fig:conchoid-inversion}. The ruler line $\ell_0$ is the common
asymptote of both branches.}
\label{fig:conchoid}
\end{figure}

\begin{figure}[htbp]
\centering
\begin{tikzpicture}[scale=1.5,>=Latex,line cap=round,line join=round,font=\small,
   outer/.style={very thick,blue!60!black},
   inner/.style={thick,gray!80},
   oval/.style={very thick,orange!90!black},
   inv/.style={->,green!45!black,semithick}]
  \def\aa{0.33333}\def\bb{0.5}
  \draw[densely dashed,thick,gray!70] (0,0) circle (2);
  \node[black!75!gray,anchor=north] at (0,-2.12) {$\partial D_R$ (radius $R$)};
  \draw[densely dotted,semithick,teal!75] (0,0) circle (1);
  \draw[thin,teal!70] (-0.71,0.71) -- (-1.48,1.12);
  \node[teal!55!black,font=\footnotesize,anchor=east,align=center] at (-1.48,1.15)
     {unit circle $|z|=1$\\(circle of $\iota$)};
  \draw[inner,domain=-48:48,samples=120,variable=\t]
     plot ({\aa-\bb*cos(\t)},{\aa*tan(\t)-\bb*sin(\t)});
  \draw[thin,gray!65] (-0.16,-0.02) -- (-1.02,-0.92);
  \node[black!75!gray,font=\footnotesize,anchor=north,align=center] at (-0.8,-0.96)
     {inner branch\\(not used)};
  \draw[outer,domain=-70:70,samples=180,variable=\t]
     plot ({\aa+\bb*cos(\t)},{\aa*tan(\t)+\bb*sin(\t)});
  \node[blue!60!black,anchor=west] at (0.16,1.72) {outer branch (transformed)};
  \draw[oval,domain=-89:89,samples=200,variable=\t]
     plot ({cos(\t)*cos(\t)/(\aa+\bb*cos(\t))},{cos(\t)*sin(\t)/(\aa+\bb*cos(\t))});
  \draw[thin,orange!80!black] (0.98,-0.57) -- (1.28,-0.88);
  \node[orange!50!black,anchor=west] at (1.28,-0.92) {final oval $f_R(\mathcal C)$};
  \foreach \t in {0,64}{
    \pgfmathsetmacro{\ro}{\aa/cos(\t)+\bb}
    \pgfmathsetmacro{\rv}{1/\ro}
    \draw[inv] ({\ro*cos(\t)},{\ro*sin(\t)}) -- ({\rv*cos(\t)},{\rv*sin(\t)});
    \fill[blue!60!black] ({\ro*cos(\t)},{\ro*sin(\t)}) circle (0.9pt);
    \fill[orange!90!black] ({\rv*cos(\t)},{\rv*sin(\t)}) circle (0.9pt);
  }
  \fill[teal!70] (0.6667,0.7454) circle (0.9pt);
  \fill[teal!70] (0.6667,-0.7454) circle (0.9pt);
  \fill (0,0) circle (1.4pt) node[below right=-2pt]{$O$};
  \node[blue!60!black,font=\scriptsize,anchor=south east] at (0.84,0.03){$P$};
  \node[orange!50!black,font=\scriptsize,anchor=south west] at (1.21,0.03){$P'\!=\!\iota(P)$};
  \node[green!40!black,font=\scriptsize,anchor=south] at (1.03,0.10){$\iota$};
\end{tikzpicture}
\caption{Which branch is inverted, and how it becomes the oval, drawn with the same
reference circle $\partial D_R$ as Figure~\ref{fig:image-circle}. The factorization
$f_R=\iota\circ\tau_R\circ\iota$ ends with inversion $\iota$ in the unit circle $|z|=1$ (dotted
teal); the conchoid (the output of $\tau_R$) and its inverse oval are overlaid in the one plane, related
by this final inversion. The overlay is schematic: $\iota$ inverts radii about $|z|=1$, so the
conchoid and its image oval share only this normalized scale, not a common length unit. Of the two branches of the conchoid $r=a_0\sec\theta\pm b_0$ ($a_0=1/D$, $b_0=1/R$,
$D=\tfrac32R$, so $b_0>a_0$), the outward push $\tau_R$ produces only the \emph{outer} branch
$r=a_0\sec\theta+b_0$ (blue) from the line $\iota(\mathcal C)=\{x=a_0\}$; the looped inner branch
(gray, at $O$) is never used. Inversion sends each point $P$ of the outer branch to
$P'=\iota(P)$ on the same ray with $|OP|\,|OP'|=1$ (green), carrying the outer branch onto the
closed oval $f_R(\mathcal C)$ (orange), $r=|w|=\cos\theta/(a_0+b_0\cos\theta)
=DR\cos\theta/(R+D\cos\theta)$. The two circles play different roles and should not be confused:
the \emph{dashed} circle is $\partial D_R$ of radius $R$, exactly as in
Figure~\ref{fig:image-circle}, and the oval lies strictly inside it in both figures; the
\emph{dotted} circle is the unit circle of inversion (radius $1$, here inside $\partial D_R$ since
$R=2$ in the figure's scale), which $\iota$ fixes pointwise, so the outer branch meets its inverse
image oval where it crosses this circle (teal dots). The conchoid vertex at $r=a_0+b_0$ inverts to the
oval's farthest point $1/(a_0+b_0)=DR/(R+D)$, which is $<R$, so the oval stays inside $\partial D_R$.}
\label{fig:conchoid-inversion}
\end{figure}

\begin{remark}
Whether the final oval reaches the unit circle of inversion in Figure~\ref{fig:conchoid-inversion} is
parameter-dependent. The outer conchoid branch has vertex at $r=a_0+b_0=1/D+1/R$, so it crosses the unit
circle $|z|=1$, and the oval meets it there, exactly when $1/D+1/R\le1$; the drawn example has
$1/D+1/R=\tfrac{5}{6}$.
\end{remark}

\subsection{Circles in general position}

\begin{proposition}[Image of a general circle]\label{prop:general-circle}
Let $\mathcal C$ be a circle whose center is at distance $c>0$ from $O$, of radius $a>0$, with
$c\neq a$ (so $O\notin\mathcal C$), and write $d=c^2-a^2$. Then $f_R(\mathcal C)$ is a smooth
closed \emph{real} curve in $D_R$, a topological circle. Its \emph{algebraic carrier} is the
\emph{circular quartic} (in coordinates with the center on the positive axis)
\begin{equation}\label{eq:gencirc}
  \tfrac{4}{R^2}\,(x^2+y^2)\,(cx-d)^2
  =\Bigl[\bigl(1+\tfrac{d}{R^2}\bigr)(x^2+y^2)-2cx+d\Bigr]^2 ,
\end{equation}
which is neither a circle nor a conic and whose normalization has geometric genus $1$. The asserted
smoothness is a property of the parametrized image $f_R(\mathcal C)$, not of the full real locus of
the squared carrier \eqref{eq:gencirc}, which may in addition carry isolated singular points
(see the genus-drop remark following Lemma~\ref{lem:gencirc-genus}). The image itself is the bounded real curve selected from the carrier \eqref{eq:gencirc} by the
\emph{unsquared} relation
\begin{equation}\label{eq:gencirc-unsq}
  \bigl(1+\tfrac{d}{R^2}\bigr)(x^2+y^2)-2cx+d+\tfrac{2}{R}\,(cx-d)\sqrt{x^2+y^2}=0.
\end{equation}
Equivalently, $|w|=R\rho/(R+\rho)$, where $\rho>0$ is the radius of the corresponding point of
$\mathcal C$, a root of $\rho^2-2c\rho\cos\theta+d=0$. When $c>a$, a ray inside the angular sector that $\mathcal C$ subtends from $O$ has two such roots, coinciding on the two boundary tangent rays. If $O$ lies outside $\mathcal C$ ($c>a$), the image is a closed
loop not enclosing $O$, contained in the annulus
$\tfrac{R(c-a)}{R+c-a}\le|w|\le\tfrac{R(c+a)}{R+c+a}$ and spanning only the directions $\mathcal C$
subtends from $O$; if $O$ lies inside $\mathcal C$ ($c<a$), the image is a closed loop encircling
$O$, contained in the analogous annulus $\tfrac{R(a-c)}{R+a-c}\le|w|\le\tfrac{R(a+c)}{R+a+c}$.
\end{proposition}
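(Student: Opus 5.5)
The plan is to invert the radial map and substitute into the equation of $\mathcal C$. Rotate so that the center of $\mathcal C$ is $(c,0)$. Then $\mathcal C$ is $|z|^2-2c\,\operatorname{Re}z+d=0$, or in polar form $\rho^2-2c\rho\cos\theta+d=0$.

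\textbf{Deriving the unsquared relation and the carrier.} For $w=(x,y)\in D_R$ put $r=|w|$ and $s=1-r/R\in(0,1]$. By the inverse formula of Section~\ref{sec:setup}, $z=f_R^{-1}(w)=w/s$. Hence $w\in f_R(\mathcal C)$ if and only if $r^2/s^2-2cx/s+d=0$, that is, $r^2-2cxs+ds^2=0$. Expanding $s$ gives exactly \eqref{eq:gencirc-unsq}:
\[
\bigl(1+\tfrac{d}{R^2}\bigr)r^2-2cx+d+\tfrac2R(cx-d)r=0 .
\]
Moving the term containing $r=\sqrt{x^2+y^2}$ to one side and squaring yields the carrier \eqref{eq:gencirc}. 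Squaring also admits the opposite sign of $r$, which is why the carrier is larger than the image. The polar description $|w|=R\rho/(R+\rho)$, with $\rho$ a root of $\rho^2-2c\rho\cos\theta+d=0$, is just Proposition~\ref{prop:lens} applied ray by ray. The statement about two roots coinciding on the tangent rays is the vanishing of the discriminant $4(c^2\cos^2\theta-d)$.

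\textbf{Topology, smoothness and the annulus.} $f_R$ is a homeomorphism onto $D_R$ and is smooth with invertible Jacobian away from $O$ (Section~\ref{sec:setup}). Since $O\notin\mathcal C$, the restriction to $\mathcal C$ is a smooth embedding, so $f_R(\mathcal C)$ is a smooth topological circle in $D_R$. It encloses $O$ if and only if $\mathcal C$ does, because the winding number about $O$ is preserved by a homeomorphism fixing $O$. Its directional span equals that of $\mathcal C$, because $f_R$ fixes rays. On $\mathcal C$ one has $|z|\in[\,|c-a|,\,c+a\,]$, and $\rho\mapsto R\rho/(R+\rho)$ is increasing, which gives both annuli.

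\textbf{Algebraic type.} First I would expand the quartic part of \eqref{eq:gencirc}: it is $(x^2+y^2)\bigl[(1+d/R^2)^2(x^2+y^2)-4c^2x^2/R^2\bigr]$. The bracket is not divisible by $x^2+y^2$ because $c\neq0$, so the carrier is circular and not bicircular.

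The genus is the main obstacle, and I would handle it via a parametrization by $r$. Treat the carrier as the locus of the unsquared relation with $r$ allowed to be either sign of $\sqrt{x^2+y^2}$. That relation is linear in $x$, so
\[
x=\frac{(1+d/R^2)r^2-2dr/R+d}{2c(1-r/R)},
\]
a rational function of $r$. Then $y^2=r^2-x^2$ equals $q_4(r)/\bigl(4c^2(1-r/R)^2\bigr)$ for an explicit quartic $q_4$. The map $(x,y)\mapsto(r,\,2c(1-r/R)\,y)$ is therefore birational onto $\sigma^2=q_4(r)$.

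It remains to show $q_4$ is squarefree of degree four. Its leading coefficient is $4c^2/R^2-(1+d/R^2)^2$, which I would check is nonzero generically. I would compute the discriminant of $q_4$, or locate its roots, which should be $r=R\rho_\pm/(R\pm\rho_\pm)$-type values coming from $\rho=c\pm a$ and $\rho=-(c\pm a)$. For these, distinctness follows from $a,c>0$ and $c\neq a$. The exceptional coincidences are deferred to the genus-drop remark after Lemma~\ref{lem:gencirc-genus}.

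Genus $1$ then excludes being a conic or a circle, since both have genus $0$.
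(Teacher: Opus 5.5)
Your derivation of \eqref{eq:gencirc-unsq} and \eqref{eq:gencirc}, the smooth-embedding argument, the enclosure, direction and annulus claims, and the circular-but-not-bicircular check all agree with the paper in substance. Substituting $f_R^{-1}(w)=w/s$ into the Cartesian equation of $\mathcal C$ is the same computation as the paper's shift $u\mapsto u+1/R$ of $u=1/\rho$. Keep your qualifier $w\in D_R$; it is not cosmetic. With $r=+\sqrt{x^2+y^2}$, the unsquared relation is also satisfied by the points $-Rz/(|z|-R)$ for $z\in\mathcal C$ with $|z|>R$, and these lie outside $\overline{D_R}$.

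For the genus you take a genuinely different route. The paper returns to the circle: it recovers $\rho$ from $1/\rho=1/r-1/R$ and the angle $\varphi$ of $z=c+ae^{i\varphi}$. This gives the $R$-independent model $\sigma^2=Q(\rho):=\bigl((a+c)^2-\rho^2\bigr)\bigl(\rho^2-(a-c)^2\bigr)$. You stay on the image side: you solve the lifted relation linearly for $x$ and obtain $\sigma^2=q_4(r)$ with $q_4(r)=4c^2s^2r^2-(r^2+ds^2)^2$, where $s=1-r/R$. The two models are related by $\rho=r/s=Rr/(R-r)$; in fact $q_4(r)=s^4Q(r/s)$. Your route avoids the detour through $\varphi$. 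The paper's route produces a quartic whose degree and four distinct roots are visible for all parameters.

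That difference is where your argument has a gap, though an easily closed one. The leading coefficient $4c^2/R^2-(1+d/R^2)^2$ of $q_4$ vanishes exactly when $(R\mp c)^2=a^2$, that is, for $R=c+a$ and for $R=|c-a|$. These are admissible parameters covered by the statement. For them $q_4$ is a cubic, so your requirement ``squarefree of degree four'' fails and your argument does not conclude.

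Deferring to the genus-drop remark does not help. That remark concerns the nodes of the plane carrier, whose degenerate locus $d^2=R^2a^2$ is a different condition. It also presupposes Lemma~\ref{lem:gencirc-genus}, which is what you are proving.

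The repair is short. Since $q_4(r)=s^4Q(r/s)$, the zeros of $q_4$ on $\mathbb{P}^1$ are the images of $\pm(c+a),\pm(c-a)$ under the M\"obius map $\rho\mapsto R\rho/(R+\rho)$. They are therefore four distinct points. One of them is $r=\infty$ precisely when $-R$ is among these roots, which is exactly the two exceptional cases. Then $q_4$ is a squarefree cubic, the double cover is branched over $\infty$ as well, and the genus is still $1$.

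You should also state explicitly that $r$ is a rational function on the carrier: $r=-R\bigl[(1+d/R^2)(x^2+y^2)-2cx+d\bigr]/\bigl(2(cx-d)\bigr)$ off $cx=d$. This is what makes your map $(x,y)\mapsto(r,\sigma)$ well defined and the projection from the lifted curve to the carrier birational. Irreducibility of the carrier then follows because $q_4$ is not a square.
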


\begin{proof}
By rotation-equivariance put the center on the positive axis, $\mathcal C=\{|z-c|=a\}$, i.e.
$\rho^2-2c\rho\cos\theta+d=0$; in $u=1/\rho$ this is $d\,u^2-2c\cos\theta\,u+1=0$. The lens
identity sends $u\mapsto u'=u+1/R$; substituting $u=u'-1/R$, writing $u'=1/r$ with
$r=|w|=\sqrt{x^2+y^2}$ and $\cos\theta=x/r$, and multiplying through by $r^2$ gives the unsquared
relation \eqref{eq:gencirc-unsq}; squaring once to clear $r$ gives the carrier \eqref{eq:gencirc}.
Its degree-four part contains exactly one factor $x^2+y^2$, so the carrier is circular but not
bicircular; that it is not a circle or conic follows from the genus computation below, which gives
normalized genus $1$ whenever $c>0$, $a>0$, and $c\neq a$. The squaring is the only place a
second branch can enter, so \eqref{eq:gencirc-unsq} selects $f_R(\mathcal C)$. That
$f_R(\mathcal C)$ is a smooth simple closed curve is immediate: $\mathcal C$ avoids $O$, and on
$\mathbb{C}\setminus\{0\}$ the map $f_R$ is a smooth diffeomorphism with nonsingular derivative (its
radial and tangential stretch factors $R^2/(R+\rho)^2$ and $R/(R+\rho)$ are both positive), so it
carries the smooth circle $\mathcal C$ to a smooth embedded loop. Here ``smooth'' refers
to this selected image curve, cut out by \eqref{eq:gencirc-unsq}, not to the full real locus of the
carrier \eqref{eq:gencirc}: the carrier may carry additional real or complex singular points where the
two sign branches meet, and these are resolved by the normalization computed next.

The carrier therefore has normalized genus $1$ by Lemma~\ref{lem:gencirc-genus} below, so it is
irreducible and is neither a circle nor a conic. Geometric genus is understood throughout over the
complex projective normalization, for which we follow Kirwan~\cite{Kirwan1992}; the real image
$f_R(\mathcal C)$ is by contrast a smoothly embedded topological circle. The two regimes and the radius range follow because
$|w|=R\rho/(R+\rho)$ increases with $\rho$, and a ray from $O$ meets $\mathcal C$ once when $O$ is
interior ($c<a$) and in a near/far pair over a directional sector when $O$ is exterior ($c>a$).
\end{proof}

\begin{lemma}[Genus of the generic-circle carrier]\label{lem:gencirc-genus}
For $c>0$, $a>0$, $c\neq a$, with $d=c^2-a^2$, the carrier quartic \eqref{eq:gencirc} is irreducible,
and its complex projective normalization is isomorphic to the smooth projective model of
\[
  \sigma^2=\bigl((a+c)^2-\rho^2\bigr)\bigl(\rho^2-(a-c)^2\bigr).
\]
The quartic in $\rho$ on the right has the four distinct roots $\rho=\pm(a+c),\pm(a-c)$, so this model
is an elliptic curve; hence the carrier has geometric genus $1$, and in particular is neither a circle
nor a conic.
\end{lemma}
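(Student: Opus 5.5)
The plan is to build an explicit birational map between the carrier quartic \eqref{eq:gencirc} and the curve $E:\sigma^2=q_4(\rho)$, where
\[
q_4(\rho)=\bigl((a+c)^2-\rho^2\bigr)\bigl(\rho^2-(a-c)^2\bigr).
\]
The map should pass through the original circle $\mathcal C$, written in polar form about $O$. The key observation is that $f_R$ preserves the ray, and in reciprocal coordinates it is the translation $u\mapsto u+1/R$. This translation is rational, so the carrier is birational to the complexified ``polar incidence curve'' of $\mathcal C$.

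\textbf{Step 1: the incidence curve.} Consider the affine curve $X=\{(\rho,\cos\theta,\sin\theta)\}$ cut out by $\rho^2-2c\rho\cos\theta+d=0$ and $\cos^2\theta+\sin^2\theta=1$. Its points encode a point of $\mathcal C$ together with the direction from which it is seen.

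\textbf{Step 2: map $X$ onto the carrier.} Send $(\rho,\cos\theta,\sin\theta)$ to
\[
(x,y)=r(\cos\theta,\sin\theta),\qquad r=\frac{R\rho}{R+\rho}.
\]
The derivation of \eqref{eq:gencirc-unsq} in the proof of Proposition~\ref{prop:general-circle} shows that the image lies on the carrier, and in fact on both squaring branches, once $r$ is allowed to be signed. For an inverse, start from $(x,y)$ on the carrier. Recover $r$ as a rational function of $x$ and $y$ from the unsquared relation: solve \eqref{eq:gencirc-unsq} linearly for $\sqrt{x^2+y^2}$, giving
\[
r=-\frac{(1+d/R^2)(x^2+y^2)-2cx+d}{(2/R)(cx-d)},
\]
which is valid off $cx=d$. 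Then set $\cos\theta=x/r$, $\sin\theta=y/r$, and $\rho=Rr/(R-r)$. This gives a birational equivalence between the carrier and $X$, and therefore also shows that the carrier is irreducible provided $X$ is. The squared carrier contains both sign choices of $r$, and these correspond to $(\rho,\theta)\leftrightarrow(-\rho,\theta+\pi)$ on $X$. That symmetry is an automorphism of $X$, so it causes no degree-two ambiguity, and I will check this carefully.

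\textbf{Step 3: reduce $X$ to $E$.} From the circle equation,
\[
\cos\theta=\frac{\rho^2+d}{2c\rho},\qquad \sin^2\theta=1-\cos^2\theta=\frac{4c^2\rho^2-(\rho^2+d)^2}{4c^2\rho^2}.
\]
The numerator factors as
\[
\bigl(2c\rho-\rho^2-d\bigr)\bigl(2c\rho+\rho^2+d\bigr)=\bigl(a^2-(\rho-c)^2\bigr)\bigl((\rho+c)^2-a^2\bigr).
\]
Setting $\sigma=2c\rho\sin\theta$ gives
\[
\sigma^2=\bigl(a^2-(\rho-c)^2\bigr)\bigl((\rho+c)^2-a^2\bigr),
\]
which is a quartic in $\rho$ with roots $c\pm a$ and $-c\pm a$, that is, $\pm(a+c)$ and $\pm(a-c)$. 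This matches the target $q_4$ up to the sign and rescaling convention for the roots, and I will reconcile the two forms (the product of the two factors is the same up to the relabeling $\rho\mapsto-\rho$). The map $(\rho,\cos\theta,\sin\theta)\mapsto(\rho,\sigma)$ is birational, with inverse $\cos\theta=(\rho^2+d)/(2c\rho)$ and $\sin\theta=\sigma/(2c\rho)$. The four roots are distinct exactly when $a\ne0$, $c\ne0$ and $a\ne c$, and these are the hypotheses. Since $q_4$ is squarefree of degree four, the double cover is irreducible and its smooth projective model has genus $1$ by Riemann--Hurwitz with four branch points.

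\textbf{Main obstacle.} I expect the hard part to be the bookkeeping in Step 2: showing that the rational inverse is well defined on a dense open set, and that the sign ambiguity from squaring is absorbed by the symmetry of $X$ rather than doubling the degree. If that argument becomes messy, the fallback is to compute the singularities of the quartic directly. By the degree--genus formula the arithmetic genus is $3$, so genus $1$ requires total delta-invariant $2$. The candidates are a node at $O$, where substituting $x=y=0$ gives $d^2\cdot$something, and the contribution at the circular points; I would verify these by local expansion.

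Finally, genus $1$ rules out a circle or a conic, since those are rational.
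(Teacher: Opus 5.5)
Your proposal is correct and is essentially the paper's proof. It uses the same recovery of $r$ from the unsquared relation off $cx=d$, the same $\rho=Rr/(R-r)$, and the same elliptic model: your $\sigma=2c\rho\sin\theta$ coincides with the paper's $2ac\sin\varphi$ (both equal $2c\operatorname{Im}z$), and your curve $X$ just makes explicit that $\rho$ is a chosen square root of $|z|^2$.

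Your hedges can be dropped. The quartic you obtain is literally $\bigl((a+c)^2-\rho^2\bigr)\bigl(\rho^2-(a-c)^2\bigr)$. The sign issue is already settled by your explicit inverse, because the unsquared relation is linear in $r$; the involution $(\rho,\theta)\mapsto(-\rho,\theta+\pi)$ is the deck map of $X$ over the circle, not over the carrier, and plays no role. The fallback is unnecessary, and its premise is wrong: $O$ is not on the carrier when $d\neq0$ (at $x=y=0$ the equation reads $0=d^2$), and the circular points are simple points of the carrier.
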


\begin{proof}
Work on the lifted curve in the variables $(x,y,r)$ cut out by $r^2=x^2+y^2$ together with the
unsquared relation \eqref{eq:gencirc-unsq}, of which the carrier \eqref{eq:gencirc} is the square.
\emph{(i) Birationality to the carrier.} Away from $cx=d$, the relation \eqref{eq:gencirc-unsq} solves
for the radius as a rational function of $(x,y)$,
\[
  r=-\frac{R\bigl[(1+d/R^2)(x^2+y^2)-2cx+d\bigr]}{2(cx-d)}\qquad(cx\neq d),
\]
so the projection $(x,y,r)\mapsto(x,y)$ onto the carrier has a rational inverse and is birational; the
squaring that produced \eqref{eq:gencirc} adds no generic information, and the lifted curve follows the
single selected sign branch. \emph{(ii) Recovering the circle parameter.} On the lifted curve the
original radius is recovered rationally by $1/\rho=1/r-1/R$, whence $z=\rho\,(x,y)/r$; conversely,
parametrizing $\mathcal C$ by $z(\varphi)=c+ae^{i\varphi}$ gives $\rho^2=c^2+a^2+2ac\cos\varphi$, and
with $s=\cos\varphi=(\rho^2-c^2-a^2)/(2ac)$ and $\eta=\sin\varphi$, eliminating $s$ from
$\eta^2=1-s^2$ gives
\[
  \eta^2=\frac{\bigl((a+c)^2-\rho^2\bigr)\bigl(\rho^2-(a-c)^2\bigr)}{4a^2c^2};
\]
after the harmless vertical rescaling $\sigma=2ac\,\eta$ this is the displayed model
$\sigma^2=\bigl((a+c)^2-\rho^2\bigr)\bigl(\rho^2-(a-c)^2\bigr)$. \emph{(iii) Isomorphism
and irreducibility.} These rational maps are mutually inverse on dense open subsets, so they induce a
birational map between the normalization of the carrier and the smooth projective model of
$\sigma^2=(\text{quartic})$; a birational map of smooth projective curves is an isomorphism. Because the
double cover $\sigma^2=(\text{quartic})$ is irreducible (the squarefree quartic
on the right is not a square in $\mathbb{C}(\rho)$) and the carrier
contains a dense open subset birational to it, the carrier is irreducible. \emph{(iv) Genus.} The
quartic $\bigl((a+c)^2-\rho^2\bigr)\bigl(\rho^2-(a-c)^2\bigr)$ is squarefree precisely because $c>0$,
$a>0$, $c\neq a$ force its four roots to be distinct, and a double cover of $\mathbb{P}^1$ branched at
four distinct points has genus $1$. The singular points created on the plane quartic by squaring are
resolved by the normalization and do not affect the geometric genus.
\end{proof}

\begin{remark}[Where the genus drops]
The squared carrier \eqref{eq:gencirc} carries both sign branches, and they cross at its finite
singular points, which for $R^2+d\neq0$ are
\[
  x=\frac{d}{c},\qquad y^2=\frac{d\,(R^2a^2-d^2)}{c^2\,(R^2+d)}
\]
(equivalently, at squared radius $x^2+y^2=R^2d/(R^2+d)$). When distinct, these are two ordinary double points
(nodes), each of $\delta$-invariant $1$; according to the parameters they form a real pair or a
complex-conjugate pair. Either way their total $\delta$-contribution is $2$, accounting for the drop
from the arithmetic genus $3$ of a plane quartic to the geometric genus $1$ of the normalization. On the
locus $d^2=R^2a^2$, where $y=0$, the two points coalesce into a single \emph{non-ordinary} double point
with a double tangent line; the normalization of Lemma~\ref{lem:gencirc-genus} still gives geometric genus $1$ here (the four roots $\pm(a+c),\pm(a-c)$ remain distinct), so the total $\delta$-contribution at this coalesced point is again $2$; and the singularities recede to infinity as $R^2+d\to0$. They are singularities of the squared carrier, not of the
parametrized image $f_R(\mathcal C)$, which is smooth throughout; a real node need not even lie on
the image, since the image is only the bounded arc of the selected branch \eqref{eq:gencirc-unsq}.
The nodes are thus an artifact of clearing the radical in \eqref{eq:gencirc-unsq}.
\end{remark}

\begin{figure}[htbp]
\centering
\begin{tikzpicture}[scale=0.82,>=Latex,line cap=round,line join=round,font=\small,
   circ/.style={very thick,blue!60!black},
   img/.style={very thick,orange!90!black},
   pull/.style={->,red!60!black,thin}]
  \def\RR{2.0}\def\cc{2.4}\def\aa{0.9}
  \draw[->,gray!50] (-2.4,0)--(3.8,0) node[below right,font=\scriptsize]{axis};
  \draw[densely dashed,thick,gray!70] (0,0) circle (\RR);
  \node[gray!70,font=\scriptsize] at (-1.45,-1.45) {$\partial D_R$};
  \draw[circ] (\cc,0) circle (\aa);
  \node[blue!60!black,font=\scriptsize] at (\cc,1.22) {$\mathcal C$};
  \foreach \f in {25,75,130,180,230,285,335}{
    \pgfmathsetmacro{\rr}{sqrt(\cc*\cc+\aa*\aa+2*\aa*\cc*cos(\f))}
    \pgfmathsetmacro{\zx}{\cc+\aa*cos(\f)}\pgfmathsetmacro{\zy}{\aa*sin(\f)}
    \draw[pull] (\zx,\zy)--({\RR*\zx/(\RR+\rr)},{\RR*\zy/(\RR+\rr)});
  }
  \draw[img,domain=0:360,samples=170,smooth,variable=\f]
    plot ({\RR*(\cc+\aa*cos(\f))/(\RR+sqrt(\cc*\cc+\aa*\aa+2*\aa*\cc*cos(\f)))},
          {\RR*\aa*sin(\f)/(\RR+sqrt(\cc*\cc+\aa*\aa+2*\aa*\cc*cos(\f)))});
  \node[orange!85!black,font=\scriptsize,anchor=east] at (0.58,-0.78) {$f_R(\mathcal C)$};
  \fill (0,0) circle (1.2pt) node[below left=-2pt,font=\scriptsize]{$O$};
  \node[font=\footnotesize] at (0.7,-2.5) {(a) $O$ outside ($c>a$)};
\end{tikzpicture}\hspace{7mm}
\begin{tikzpicture}[scale=0.82,>=Latex,line cap=round,line join=round,font=\small,
   circ/.style={very thick,blue!60!black},
   img/.style={very thick,orange!90!black},
   pull/.style={->,red!60!black,thin}]
  \def\RR{2.0}\def\cc{0.9}\def\aa{2.0}
  \draw[->,gray!50] (-2.6,0)--(3.4,0) node[below right,font=\scriptsize]{axis};
  \draw[densely dashed,thick,gray!70] (0,0) circle (\RR);
  \node[gray!70,font=\scriptsize] at (-1.35,-1.5) {$\partial D_R$};
  \draw[circ] (\cc,0) circle (\aa);
  \node[blue!60!black,font=\scriptsize] at (-1.05,1.95) {$\mathcal C$};
  \foreach \f in {0,45,90,135,180,225,270,315}{
    \pgfmathsetmacro{\rr}{sqrt(\cc*\cc+\aa*\aa+2*\aa*\cc*cos(\f))}
    \pgfmathsetmacro{\zx}{\cc+\aa*cos(\f)}\pgfmathsetmacro{\zy}{\aa*sin(\f)}
    \draw[pull] (\zx,\zy)--({\RR*\zx/(\RR+\rr)},{\RR*\zy/(\RR+\rr)});
  }
  \draw[img,domain=0:360,samples=170,smooth,variable=\f]
    plot ({\RR*(\cc+\aa*cos(\f))/(\RR+sqrt(\cc*\cc+\aa*\aa+2*\aa*\cc*cos(\f)))},
          {\RR*\aa*sin(\f)/(\RR+sqrt(\cc*\cc+\aa*\aa+2*\aa*\cc*cos(\f)))});
  \node[orange!85!black,font=\scriptsize,anchor=west] at (1.24,0.42) {$f_R(\mathcal C)$};
  \fill (0,0) circle (1.2pt) node[below right=-2pt,font=\scriptsize]{$O$};
  \node[font=\footnotesize] at (0.4,-2.6) {(b) $O$ inside ($c<a$)};
\end{tikzpicture}
\caption{The image of a generic circle $\mathcal C$ (blue) under $f_R$: a smooth closed curve
(orange) lying on the circular quartic \eqref{eq:gencirc}, whose normalization has geometric genus $1$. Each point of $\mathcal C$ is pulled inward along its ray from $O$ (red). \textbf{(a)} With
$O$ outside $\mathcal C$ the image is a loop not enclosing $O$, confined to the cone of directions
$\mathcal C$ subtends from $O$. \textbf{(b)} With $O$ inside $\mathcal C$ the image is a loop
encircling $O$. Unlike the centered circle (a circle) and the circle through $O$ (the rational
quartic of Figure~\ref{fig:image-circle}), the generic carrier has geometric genus $1$.}
\label{fig:general-circle}
\end{figure}

\begin{remark}
The two earlier cases are the genus-$0$ degenerations of \eqref{eq:gencirc}. As $c\to a$ (circle
through $O$) the roots $\pm(a-c)$ collide at $\rho=0$, the quartic acquires a singular point and
its genus drops to $0$, the rational quartic of Proposition~\ref{prop:circle-O}. At $c=0$ the
circle is $|z|=a$ and maps to the circle $|w|=Ra/(R+a)$ (Corollary~\ref{cor:circles}); here the squared
carrier \eqref{eq:gencirc} factors into sign branches, and the actual image is the single branch
$|w|=Ra/(R+a)$. The
Self-Directrix line case is the further limit $a,c\to\infty$ with $c-a$ fixed.
\end{remark}

\begin{table}[htbp]
\centering
\caption{Image under the cone projection $f_R$ of the basic input curves, collecting the results of Sections~\ref{sec:geometry} and~\ref{sec:circle-image}.}
\label{tab:curve-images}
{\small
\begin{tabularx}{\textwidth}{@{}l L L@{}}
\hline
Input curve & Image under $f_R$ & Notes \\
\hline
Line at distance $d>0$ & conic arc, focus $O$, $e=R/d$, semi-latus rectum $R$ & directrix is the line itself; ellipse/parabola/hyperbola for $d>R$, $d=R$, $d<R$ \\[4pt]
Focal conic $\mathcal{B}(\lambda,e,\alpha)$ & focal arc $\mathcal{B}(\lambda',e',\alpha)$ & same focus and directrix; $\lambda'=\frac{R\lambda}{R+\lambda}$, $e'=\frac{Re}{R+\lambda}$ \\[4pt]
Circle centered at $O$, radius $a$ & concentric circle of radius $Ra/(R+a)$ & curvature raised by $1/R$ \\[4pt]
Circle through $O$ & closed $C^2$ loop on a rational circular quartic & inverse of one branch of a conchoid of Nicomedes; not $C^3$ at $O$; not a conic \\[4pt]
Generic circle & smooth closed real curve on a circular quartic & neither centered at $O$ nor through $O$; carrier is non-rational (geometric genus $1$) \\
\hline
\end{tabularx}
}
\end{table}

\FloatBarrier

\section{The composition law}\label{sec:semigroup}

With the central geometric result in hand, we turn to the algebraic, projective, and dynamical structure of the family $\{f_R\}_{R>0}$. A consequence of Proposition~\ref{prop:lens} is that this family is closed under composition, and composition adds curvatures.

\begin{theorem}[Composition law]\label{thm:semigroup}
For every $R_1,R_2>0$,
\[
  f_{R_1}\circ f_{R_2} \;=\; f_{R_3}, \qquad \text{where} \qquad \frac{1}{R_3} \;=\; \frac{1}{R_1}+\frac{1}{R_2}.
\]
\end{theorem}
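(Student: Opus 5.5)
The plan is to reduce everything to the two structural facts already established for $f_R$: it is radial, and it acts on reciprocal moduli by a constant shift (Proposition~\ref{prop:lens}). We check the identity pointwise. At $z=0$ both sides vanish, since every $f_R$ fixes the origin. For $z\neq0$, write $z=\rho e^{i\theta}$.

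\emph{Step 1 (argument).} By Proposition~\ref{prop:p10}, $f_{R_2}(z)=\frac{R_2}{R_2+\rho}\,z$ is a positive multiple of $z$. It is therefore nonzero and has argument $\theta$. Applying $f_{R_1}$ again multiplies by a positive scalar, so $f_{R_1}(f_{R_2}(z))$ also lies on the open ray of direction $\theta$. The same holds for $f_{R_3}(z)$.

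\emph{Step 2 (modulus).} Since $f_{R_2}(z)\neq0$, we may apply Proposition~\ref{prop:lens} twice:
\[
  \frac{1}{|f_{R_1}(f_{R_2}(z))|}=\frac{1}{|f_{R_2}(z)|}+\frac{1}{R_1}=\frac{1}{\rho}+\frac{1}{R_2}+\frac{1}{R_1}=\frac{1}{\rho}+\frac{1}{R_3}=\frac{1}{|f_{R_3}(z)|}.
\]
Here $R_3=R_1R_2/(R_1+R_2)>0$, so $f_{R_3}$ is a legitimate cone projection. The two points thus have equal moduli and lie on the same open ray from $O$, which forces them to be equal.

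\emph{Step 3 (domains).} It remains to note that the composition is well defined on all of $\mathbb{C}$. Since $f_{R_1}$ is defined on all of $\mathbb{C}$, applying it to $f_{R_2}(z)\in D_{R_2}$ raises no issue. The resulting image lies in $D_{R_3}\subset D_{R_1}$, which is consistent with the formula.

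There is no real obstacle here. The only point needing care is the nonvanishing of the intermediate point $f_{R_2}(z)$, which the lens identity requires and which Step 1 supplies. As an alternative, the same computation can be done directly with the scalar radial action $\sigma_R(\rho)=R\rho/(R+\rho)$: one checks $\sigma_{R_1}\circ\sigma_{R_2}=\sigma_{R_3}$ by clearing denominators, giving $R_1R_2\rho/(R_1R_2+(R_1+R_2)\rho)$.
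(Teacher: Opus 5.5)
Your proposal is correct and follows essentially the same route as the paper: you handle $z=0$ separately, use ray preservation to match arguments, and apply the lens identity of Proposition~\ref{prop:lens} twice to match moduli. Your explicit check that the intermediate point $f_{R_2}(z)$ is nonzero, and your scalar cross-check via $\sigma_{R_1}\circ\sigma_{R_2}=\sigma_{R_3}$, are both valid supplements.
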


\begin{proof}
The case $z=0$ is immediate. For $z\ne0$, each $f_R$ preserves the ray through $z$, because
\[
  f_R(\rho e^{i\theta})=\frac{\rho}{1+\rho/R}e^{i\theta}.
\]
Let $w=f_{R_2}(z)$ and $u=f_{R_1}(w)$. By the lens identity,
\[
  \frac{1}{|u|}=\frac{1}{|w|}+\frac{1}{R_1}
  =\frac{1}{|z|}+\frac{1}{R_2}+\frac{1}{R_1}
  =\frac{1}{|z|}+\frac{1}{R_3}.
\]
Thus $u$ and $f_{R_3}(z)$ have the same modulus and lie on the same ray from the origin, so they are equal.
\end{proof}

\begin{corollary}\label{cor:abelian}
The family $\{f_R\}_{R>0}$ is an abelian semigroup under composition, isomorphic to $(\mathbb{R}_{>0},+)$ via the curvature parameter $f_R\mapsto\kappa=1/R$. Equivalently, the induced composition law on radii is $R_1*R_2=R_1R_2/(R_1+R_2)$, so it is the curvatures $1/R$, not the radii, that add.
\end{corollary}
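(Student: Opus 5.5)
The corollary follows from Theorem~\ref{thm:semigroup} once the curvature parametrization is shown to convert composition into addition. I would organize the argument in three steps: closure and the homomorphism property, the structural properties (associativity, commutativity, no identity), and injectivity of the parametrization.

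\textbf{Closure and homomorphism.} Define $\Phi:\{f_R\}_{R>0}\to(\mathbb{R}_{>0},+)$ by $\Phi(f_R)=1/R$. By Theorem~\ref{thm:semigroup}, $f_{R_1}\circ f_{R_2}=f_{R_3}$ with $1/R_3=1/R_1+1/R_2$. Since $1/R_3>0$, we have $R_3>0$, so the family is closed under composition. The same identity reads $\Phi(f_{R_1}\circ f_{R_2})=\Phi(f_{R_1})+\Phi(f_{R_2})$, which is the homomorphism property.

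\textbf{Associativity, commutativity, and the missing identity.} Associativity is inherited from composition of maps. Commutativity follows from the symmetry of $1/R_1+1/R_2$ in $R_1,R_2$: both $f_{R_1}\circ f_{R_2}$ and $f_{R_2}\circ f_{R_1}$ equal the same $f_{R_3}$. The family has no identity element, since the identity would require $\kappa=0$. This matches $(\mathbb{R}_{>0},+)$, which is likewise a semigroup without identity, so ``semigroup'' is the correct term and not ``monoid''.

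\textbf{Bijectivity of $\Phi$.} Surjectivity is immediate: every $\kappa>0$ equals $\Phi(f_{1/\kappa})$. Injectivity is the only point needing an argument, namely that distinct radii give distinct maps. If $f_{R}=f_{R'}$, then evaluating at $z=1$ gives $R/(R+1)=R'/(R'+1)$. The function $t\mapsto t/(t+1)$ is strictly increasing, so $R=R'$. This is the main (if mild) obstacle, because the isomorphism claim is about the maps themselves, not just their labels. Alternatively, $\sup|f_R|=R$ recovers $R$ from $f_R$.

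\textbf{The composition law on radii.} Solving $1/R_3=1/R_1+1/R_2$ for $R_3$ gives $R_1*R_2=R_1R_2/(R_1+R_2)$. Transported through $\Phi$, the operation that is additive is on curvatures, not on radii. This completes the corollary.
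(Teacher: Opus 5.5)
Your proposal is correct and takes essentially the paper's route: Theorem~\ref{thm:semigroup} shows that composition becomes addition of the curvatures $1/R$, and associativity and commutativity follow from that. Your explicit injectivity check via $f_R(1)=R/(R+1)$ fills in a point the paper leaves implicit, namely that distinct radii give distinct maps.
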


\begin{proof}
Under the parameter $\tau=1/R$, Theorem~\ref{thm:semigroup} says that composition becomes addition: $\tau_3=\tau_1+\tau_2$. Hence associativity and commutativity follow from those of addition on $\mathbb{R}_{>0}$.
\end{proof}

\begin{remark}
The semigroup is \emph{not} a group: as $R$ ranges over $(0,\infty)$, so does $1/R$, so there is no identity element of the form $f_R$ (any such identity would need $1/R=0$, impossible for $R>0$). The identity arises as the formal limit $R\to\infty$ (cone of vanishing curvature), and the inverses $f_R^{-1}$ correspond to formal radius $-R$. Both extensions are made precise in the next section.
\end{remark}

\section{The one-parameter partial group of radial M\"obius maps}\label{sec:group}

The maps $f_R$ extend, with appropriate domains, to a one-parameter partial group. The word ``partial'' is important: these maps do not form a group of self-homeomorphisms of one fixed space. For positive curvature they map the whole plane onto a disk, while for negative curvature they map a disk onto the whole plane.

\begin{definition}\label{def:phi}
For $\kappa\in\mathbb{R}$, define the \emph{radial M\"obius map of curvature $\kappa$} by
\[
  \phi_\kappa(z) \;:=\; \frac{z}{1+\kappa\,|z|},
\]
on the set
\[
  \Omega_\kappa \;:=\; \begin{cases}\mathbb{C}, & \kappa\ge 0,\\[2pt] \{z\in\mathbb{C}\,:\,|z|<1/|\kappa|\}, & \kappa<0.\end{cases}
\]
\end{definition}

For $\kappa>0$, the denominator $1+\kappa|z|$ is always positive, so $\phi_\kappa$ is defined on all of $\mathbb{C}$, and equals $f_{1/\kappa}$. For $\kappa=0$, $\phi_0(z)=z/(1+0)=z$, so $\phi_0=\mathrm{id}$. For $\kappa<0$, the denominator vanishes when $|z|=1/|\kappa|$, so we restrict to the open disk of radius $1/|\kappa|$, where the denominator is positive. In all three cases the domain has the single uniform description
\[
  \Omega_\kappa=\{z\in\mathbb{C}:\,1+\kappa|z|>0\},
\]
the set on which the denominator is positive; this is the form used throughout the proof below.

The qualifier ``radial'' is essential: each $\phi_\kappa$ is a M\"obius transformation only in the radial coordinate $|z|$ along each ray from the origin (Section~\ref{sec:cross-ratio}), and is \emph{not} a complex M\"obius transformation of the plane; indeed it is not even holomorphic, since it depends on $|z|$. The term ``radial M\"obius map'' should be read throughout in this raywise sense.

\begin{theorem}[Partial group law]\label{thm:group}
Let
\[
  D_{\kappa_1,\kappa_2}
  :=\{z\in\Omega_{\kappa_2}:\phi_{\kappa_2}(z)\in\Omega_{\kappa_1}\}.
\]
On this composition domain,
\[
  \phi_{\kappa_1}\circ\phi_{\kappa_2} \;=\; \phi_{\kappa_1+\kappa_2}.
\]
Equivalently, the identity holds exactly where the left-hand side is defined; this composition domain is precisely
\[
  D_{\kappa_1,\kappa_2}=\Omega_{\kappa_2}\cap\Omega_{\kappa_1+\kappa_2}
  =\{z:\,1+\kappa_2|z|>0\ \text{and}\ 1+(\kappa_1+\kappa_2)|z|>0\},
\]
so it is always contained in $\Omega_{\kappa_1+\kappa_2}$ but need not equal it. In particular, each $\phi_\kappa$ is a homeomorphism of $\Omega_\kappa$ onto $\Omega_{-\kappa}$, with $\phi_\kappa^{-1}=\phi_{-\kappa}$:
\[
  \phi_\kappa(\Omega_\kappa)=\Omega_{-\kappa},\qquad \phi_\kappa^{-1}=\phi_{-\kappa}.
\]
For $\kappa>0$ this reads $\phi_\kappa:\mathbb{C}\xrightarrow{\ \sim\ }\{|z|<1/\kappa\}$, and for $\kappa<0$ it runs the other way, $\phi_\kappa:\{|z|<1/|\kappa|\}\xrightarrow{\ \sim\ }\mathbb{C}$. Thus $\{\phi_\kappa\}_{\kappa\in\mathbb{R}}$ is a one-parameter partial group, or equivalently a maximal radial flow with varying domains, modeled by addition of the curvature parameter $\kappa$.
\end{theorem}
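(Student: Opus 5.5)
Everything reduces to the radial coordinate, exactly as in the proof of Theorem~\ref{thm:semigroup}. Each $\phi_\kappa$ fixes $0$ and, for $z=\rho e^{i\theta}\neq0$ in $\Omega_\kappa$, returns $\frac{\rho}{1+\kappa\rho}e^{i\theta}$. This is a positive multiple of $z$, since $1+\kappa\rho>0$ on $\Omega_\kappa$, so the ray is preserved. The whole proof therefore concerns the scalar map $s_\kappa(\rho)=\rho/(1+\kappa\rho)$ on $I_\kappa=\{\rho\ge0:1+\kappa\rho>0\}$. In the reciprocal coordinate it is the translation $1/s_\kappa(\rho)=1/\rho+\kappa$ for $\rho>0$, the signed version of Proposition~\ref{prop:lens}.

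\textbf{Step 1 (composition domain).} I will compute $D_{\kappa_1,\kappa_2}$. For $z\in\Omega_{\kappa_2}$ with $\rho=|z|$, the image modulus is $\rho/(1+\kappa_2\rho)$. The condition $\phi_{\kappa_2}(z)\in\Omega_{\kappa_1}$ reads $1+\kappa_1\rho/(1+\kappa_2\rho)>0$. Multiplying by the positive quantity $1+\kappa_2\rho$ turns this into $1+(\kappa_1+\kappa_2)\rho>0$. Hence $D_{\kappa_1,\kappa_2}=\Omega_{\kappa_2}\cap\Omega_{\kappa_1+\kappa_2}$, as claimed. This clearing of denominators is the one place where care is needed, and the positivity supplied by $z\in\Omega_{\kappa_2}$ is exactly what makes it legitimate. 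I will also give a one-line example showing that the inclusion can be strict: $\kappa_2<0<\kappa_1+\kappa_2$ gives $D=\Omega_{\kappa_2}$, a disk, while $\Omega_{\kappa_1+\kappa_2}=\mathbb{C}$.

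\textbf{Step 2 (the law).} On $D_{\kappa_1,\kappa_2}$ the case $z=0$ is trivial. For $z\neq0$, a direct computation gives
\[
\frac{\rho/(1+\kappa_2\rho)}{1+\kappa_1\rho/(1+\kappa_2\rho)}=\frac{\rho}{1+(\kappa_1+\kappa_2)\rho}.
\]
Equivalently, the reciprocal modulus is shifted by $\kappa_2$ and then by $\kappa_1$. Both sides lie on the ray of $z$, so they are equal.

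\textbf{Step 3 (homeomorphism and inverse).} I will show $\phi_\kappa(\Omega_\kappa)=\Omega_{-\kappa}$ directly on the radial coordinate. The map $s_\kappa$ is continuous and strictly increasing on $I_\kappa$, since $s_\kappa'(\rho)=(1+\kappa\rho)^{-2}>0$, and $s_\kappa(0)=0$. Its limit at the right end of $I_\kappa$ is $1/\kappa$ when $\kappa>0$ (as $\rho\to\infty$), $+\infty$ when $\kappa<0$ (as $\rho\to1/|\kappa|$), and $+\infty$ when $\kappa=0$. So $s_\kappa$ maps $I_\kappa$ onto $I_{-\kappa}$ bijectively.

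Next, Step 1 with $(\kappa_1,\kappa_2)=(-\kappa,\kappa)$ gives $D_{-\kappa,\kappa}=\Omega_\kappa\cap\Omega_0=\Omega_\kappa$, so $\phi_{-\kappa}\circ\phi_\kappa=\phi_0=\mathrm{id}$ on $\Omega_\kappa$. Symmetrically, $\phi_\kappa\circ\phi_{-\kappa}=\mathrm{id}$ on $\Omega_{-\kappa}$. Hence $\phi_{-\kappa}$ is a two-sided inverse, and the two maps are mutually inverse bijections between $\Omega_\kappa$ and $\Omega_{-\kappa}$. Both maps are continuous, because $|z|$ is continuous and the denominator is positive on the domain, so $\phi_\kappa$ is a homeomorphism. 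The concluding sentence about a one-parameter partial group then simply restates the law together with $\phi_0=\mathrm{id}$ and the inverses.
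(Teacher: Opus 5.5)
Your proposal is correct and follows the paper's proof essentially step for step. You obtain the composition domain by clearing the positive denominator $1+\kappa_2|z|$, which is the paper's factorization $1+(\kappa_1+\kappa_2)|z|=(1+\kappa_2|z|)(1+\kappa_1|w|)$; you prove the law through the reciprocal-modulus shift on a fixed ray; and you get the homeomorphism from the monotone radial bijection together with the special cases $(\kappa_1,\kappa_2)=(\mp\kappa,\pm\kappa)$. The only minor difference is that your identity $D_{-\kappa,\kappa}=\Omega_\kappa$ already gives $\phi_\kappa(\Omega_\kappa)\subseteq\Omega_{-\kappa}$, so the separate monotonicity analysis is not strictly needed.
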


\begin{proof}
\emph{Step 1 (the lens identity holds for all real $\kappa$).} For $z\ne 0$ in $\Omega_\kappa$, the denominator $1+\kappa|z|$ is positive, so
\[
  |\phi_\kappa(z)| \;=\; \frac{|z|}{1+\kappa|z|} \;>\; 0.
\]
Taking reciprocals,
\[
  \frac{1}{|\phi_\kappa(z)|} \;=\; \frac{1+\kappa|z|}{|z|} \;=\; \frac{1}{|z|} + \kappa.
\]
This is the lens identity with curvature $\kappa$, valid for any sign of $\kappa$.

\emph{Step 2 (rotation equivariance).} As in the case of $f_R$, we have $\arg(\phi_\kappa(z))=\arg(z)$ for $z\ne 0$ in $\Omega_\kappa$, since $\phi_\kappa(z)=z/(1+\kappa|z|)$ multiplies $z$ by the positive real number $1/(1+\kappa|z|)$.

\emph{Step 3 (composition).} For $z\ne 0$ in $D_{\kappa_1,\kappa_2}$, set $w=\phi_{\kappa_2}(z)$ and $u=\phi_{\kappa_1}(w)$. The domain condition says exactly that both of these quantities are defined. Moreover, since $|w|=|z|/(1+\kappa_2|z|)$,
\[
  1+\kappa_1|w|=1+\frac{\kappa_1|z|}{1+\kappa_2|z|}=\frac{1+(\kappa_1+\kappa_2)|z|}{1+\kappa_2|z|};
\]
multiplying through by $1+\kappa_2|z|>0$ and using $1+\kappa_1|w|>0$ gives
\[
  1+(\kappa_1+\kappa_2)|z|=(1+\kappa_2|z|)(1+\kappa_1|w|)>0,
\]
so $z\in\Omega_{\kappa_1+\kappa_2}$. Conversely, the same factorization shows that whenever $1+\kappa_2|z|>0$ and $1+(\kappa_1+\kappa_2)|z|>0$ one has $1+\kappa_1|w|>0$, that is $w\in\Omega_{\kappa_1}$; hence $D_{\kappa_1,\kappa_2}=\Omega_{\kappa_2}\cap\Omega_{\kappa_1+\kappa_2}$ exactly. (The point $z=0$ lies in both sides, since $1+\kappa\cdot 0=1>0$ places it in every $\Omega_\kappa$, so the set equality holds there too.) By Step~1,
\[
  \frac{1}{|w|} \;=\; \frac{1}{|z|}+\kappa_2,
  \qquad
  \frac{1}{|u|} \;=\; \frac{1}{|w|}+\kappa_1.
\]
Substituting,
\[
  \frac{1}{|u|} \;=\; \frac{1}{|z|}+\kappa_1+\kappa_2 \;=\; \frac{1}{|\phi_{\kappa_1+\kappa_2}(z)|}.
\]
Combined with $\arg(u)=\arg(z)=\arg(\phi_{\kappa_1+\kappa_2}(z))$ (from Step~2), this gives $u=\phi_{\kappa_1+\kappa_2}(z)$. The case $z=0$ is again trivial.

\emph{Step 4 (inverses, range, and the partial group property).} First identify the range. The radial map $\sigma_\kappa(r):=r/(1+\kappa r)$ has derivative $\sigma_\kappa'(r)=1/(1+\kappa r)^2>0$ on the admissible range $A_\kappa:=\{r\ge 0:1+\kappa r>0\}$, so it is a strictly increasing continuous function there, with $\sigma_\kappa(0)=0$ and limiting value $1/\kappa$ as $r\to\infty$ when $\kappa>0$ (respectively $+\infty$ as $r\to(1/|\kappa|)^-$ when $\kappa<0$); in either case $\sigma_\kappa$ maps $A_\kappa$ bijectively onto $A_{-\kappa}$. Concretely, for $z\in\Omega_\kappa$ the image $r:=|\phi_\kappa(z)|$ satisfies $1/r=1/|z|+\kappa$ by Step~1, so $1/r-\kappa=1/|z|>0$, that is $1-\kappa r>0$; hence $\phi_\kappa(z)\in\Omega_{-\kappa}$, and by the bijectivity of $\sigma_\kappa$ the modulus $r$ runs over exactly the admissible radii of $\Omega_{-\kappa}$. Therefore $\phi_\kappa(\Omega_\kappa)=\Omega_{-\kappa}$. Next, setting $\kappa_1=\kappa$ and $\kappa_2=-\kappa$ in Step~3 gives $\phi_\kappa\circ\phi_{-\kappa}=\phi_0=\mathrm{id}$ and $\phi_{-\kappa}\circ\phi_\kappa=\mathrm{id}$ on the domains where these compositions are defined. Since $\phi_\kappa$ is continuous and ray-preserving and $\sigma_\kappa$ is a bijection $A_\kappa\to A_{-\kappa}$, the map $\phi_\kappa$ is a homeomorphism $\Omega_\kappa\to\Omega_{-\kappa}$, and $\phi_{-\kappa}$ is its inverse, mapping $\Omega_{-\kappa}$ back onto $\Omega_\kappa$.

The formula in Step~3 shows that the curvature parameter adds exactly as the real parameter of a one-parameter flow; the qualification is that the maximal domains vary with the sign and size of $\kappa$.
\end{proof}

Concretely, the inverse of the cone projection is the radial M\"obius map of curvature $-1/R$:
\[
  f_R^{-1}(w) \;=\; \phi_{-1/R}(w) \;=\; \frac{w}{1-\tfrac{1}{R}|w|}, \qquad |w|<R.
\]

\section{Iteration and the infinitesimal generator}\label{sec:iteration}

Specializing Theorem~\ref{thm:semigroup} to $R_1=R_2=R$ gives a closed-form iteration formula.

\begin{corollary}[Iteration]\label{cor:iteration}
For every integer $n\ge 1$,
\[
  f_R^{\,n} \;=\; f_{R/n}, \qquad |f_R^{\,n}(z)| \;=\; \frac{R\,|z|}{R+n|z|}.
\]
\end{corollary}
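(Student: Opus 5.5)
The plan is induction on $n$, using the composition law of Theorem~\ref{thm:semigroup}; only the curvature bookkeeping needs any care.

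For $n=1$ the claim $f_R^{\,1}=f_{R/1}$ holds trivially.

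Assume $f_R^{\,n}=f_{R/n}$. Then $f_R^{\,n+1}=f_R\circ f_R^{\,n}=f_R\circ f_{R/n}$. Apply Theorem~\ref{thm:semigroup} with $R_1=R$ and $R_2=R/n$. It gives $f_{R_3}$ with
\[
  \frac{1}{R_3}=\frac1R+\frac nR=\frac{n+1}{R},
\]
so $R_3=R/(n+1)$ and the induction closes. Equivalently, by Corollary~\ref{cor:abelian}, the curvature of the $n$-fold composite is the $n$-fold sum $n\cdot(1/R)$.

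The modulus formula follows by substituting $R/n$ for $R$ in \eqref{eq:mag}:
\[
  |f_{R/n}(z)|=\frac{(R/n)|z|}{R/n+|z|}=\frac{R|z|}{R+n|z|}.
\]
The same result comes directly from iterating Proposition~\ref{prop:lens}, which gives $1/|f_R^{\,n}(z)|=1/|z|+n/R$ for $z\neq0$. The case $z=0$ holds because every $f_R$ fixes the origin.

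There is no real obstacle here. The one point to check is that every composite is defined on all of $\mathbb{C}$. This holds because each $f_{R'}$ with $R'>0$ is globally defined, so the domain issues of the partial group in Theorem~\ref{thm:group} do not arise for positive curvatures.
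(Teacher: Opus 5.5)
Your proof is correct and follows essentially the paper's route. The paper likewise obtains $f_R^{\,n}=f_{R/n}$ from Theorem~\ref{thm:semigroup} by noting that the curvatures add to $n/R$, which your induction simply makes explicit, and then substitutes $R/n$ into \eqref{eq:mag} to get the modulus formula.
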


\begin{proof}
The curvature of $f_R$ is $1/R$. After $n$ compositions, curvatures add to $n/R$, so $f_R^{\,n}=f_{R/n}$ by Theorem~\ref{thm:semigroup}. Applying \eqref{eq:mag} with $R/n$ gives
\[
  |f_R^{\,n}(z)|=|f_{R/n}(z)|=\frac{(R/n)|z|}{(R/n)+|z|}
  =\frac{R|z|}{R+n|z|}.
\]
\end{proof}

So $n$ iterations of the cone of radius $R$ are geometrically equivalent to a single cone of radius $R/n$. As $n\to\infty$, $|f_R^{\,n}(z)|\to 0$ for every $z$; for every fixed $z\ne0$, the decay is asymptotically $1/n$, more precisely $|f_R^{\,n}(z)|\sim R/n$, meaning $\lim_{n\to\infty}|f_R^{\,n}(z)|\big/(R/n)=1$. Since $|f_R^{\,n}(z)|=R|z|/(R+n|z|)$ and $R+n|z|\sim n|z|$ as $n\to\infty$, the leading term is $R/n$ regardless of the starting radius $|z|$. The exceptional point $z=0$ is fixed for all $n$. Two nonzero points starting far apart are squeezed to the same neighborhood of the origin at the same asymptotic rate. The iteration is therefore a dynamical contraction toward the origin in the orbit sense; $f_R$ is \emph{not} a strict Euclidean contraction in the Banach fixed-point sense, since its derivative tends to the identity at the origin (see Section~\ref{sec:setup}) and its global Lipschitz constant is $1$.

The partial group law lets us define a continuous-time interpolation $f_R^{\,t}:=\phi_{t/R}$ for $t\in\mathbb{R}$. The maximal flow domain is the open set
\[
  \mathcal{D}\;=\;\Bigl\{(t,z)\in\mathbb{R}\times\mathbb{C}:\ 1+\tfrac{t}{R}\,|z|>0\Bigr\},
\]
on which
\[
  f_R^{\,t}(z)=\frac{z}{1+(t/R)\,|z|}.
\]
For $t\ge 0$ this is a continuous one-parameter semigroup of self-maps of $\mathbb{C}$ (and $f_R^{\,n}$ for integer $n\ge 1$ matches Corollary~\ref{cor:iteration}, since $\phi_{n/R}=f_{R/n}$). For $t<0$ the same formula gives the backward flow only on the disk $|z|<R/|t|$.

Recall that the \emph{infinitesimal generator} of such a one-parameter family is the vector field $V$ obtained by differentiating the family at $t=0$ (the standard dynamical-systems notion \cite{teschl2012}),
\[
  V(z):=\frac{d}{dt}\Big|_{t=0} f_R^{\,t}(z),
\]
so that $V(z)$ is the initial velocity of the point $z$ under the motion $t\mapsto f_R^{\,t}(z)$. The generator encodes the entire family infinitesimally: the curves $t\mapsto f_R^{\,t}(z_0)$ are recovered from $V$ as the solutions of the autonomous ODE $\dot z=V(z)$, so each map $f_R^{\,t}$ is the time-$t$ flow of the single vector field $V$. The next proposition identifies this vector field explicitly.

\begin{proposition}[Infinitesimal generator]\label{prop:gen}
For each fixed $z_0\in\mathbb{C}$, the curve $t\mapsto f_R^{\,t}(z_0)$ solves the ODE
\[
  \dot z \;=\; -\,\frac{|z|}{R}\,z, \qquad z(0)=z_0,
\]
on its maximal interval of existence, i.e.\ the largest open interval containing $t=0$ on which a solution exists: all $t\in\mathbb{R}$ if $z_0=0$, and $t>-R/|z_0|$ if $z_0\ne0$. In particular, the family $\{f_R^{\,t}\}_{t\ge 0}$ is the global forward time-$t$ flow of the radial vector field $V(z)=-|z|\,z/R$ on $\mathbb{C}$. This vector field is globally $C^1$ and is smooth on $\mathbb{C}\setminus\{0\}$, but it is not $C^2$ at the origin.
\end{proposition}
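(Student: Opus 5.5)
We verify the ODE by direct differentiation of the closed form, then read off the maximal interval from the flow domain $\mathcal D$, and finally check the regularity of $V$ by the same radial-decomposition argument used for $f_R$ in Section~\ref{sec:setup}.

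\emph{Step 1 (the ODE).} Fix $z_0\ne0$ and set $z(t)=z_0/(1+(t/R)|z_0|)$ on $\{t:1+(t/R)|z_0|>0\}=\{t>-R/|z_0|\}$. On this set $|z(t)|=|z_0|/(1+(t/R)|z_0|)$, since the denominator is positive. Differentiating gives
\[
  \dot z(t)=-\frac{|z_0|}{R}\,\frac{z_0}{(1+(t/R)|z_0|)^2}=-\frac{|z(t)|}{R}\,z(t),
\]
and $z(0)=z_0$. For $z_0=0$ the constant curve $0$ solves the ODE for all $t$. Equivalently, in the reciprocal radius $u=1/|z|$ this is the lens flow $\dot u=1/R$ (Step~1 of Theorem~\ref{thm:group}), with the argument constant.

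\emph{Step 2 (maximality and uniqueness).} The vector field $V(z)=-|z|z/R$ is locally Lipschitz; Step~3 shows it is $C^1$. Picard--Lindel\"of therefore gives uniqueness, so the solution through $z_0$ is exactly the curve of Step~1 on any common interval. For $z_0\ne0$ and $t\to(-R/|z_0|)^+$ we have $|z(t)|\to\infty$. A solution that blows up cannot be extended, so $(-R/|z_0|,\infty)$ is the maximal interval. In particular every forward orbit exists for all $t\ge0$. Combined with $f_R^{\,t}=\phi_{t/R}$ and the partial group law (Theorem~\ref{thm:group}), this identifies $\{f_R^{\,t}\}_{t\ge0}$ with the global forward flow of $V$.

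\emph{Step 3 (regularity).} Away from $0$, $V$ is a product of smooth functions. In real coordinates, $DV(\mathbf x)=-\frac1R\bigl(|\mathbf x|I_2+\mathbf x\otimes\mathbf x/|\mathbf x|\bigr)$ for $\mathbf x\ne0$. Writing $\mathbf x=|\mathbf x|\mathbf u$ bounds this in norm by $2|\mathbf x|/R\to0$. As in the argument for $f_R$, the mean value theorem then gives $DV(0)=0$ and $V\in C^1$. For the failure of $C^2$, restrict to a line: $t\mapsto\mathbf u\cdot V(t\mathbf u)=-t|t|/R$ has second derivative $-2/R$ for $t>0$ and $+2/R$ for $t<0$, so the one-sided second derivatives at $0$ disagree.

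The only delicate point is Step~2. One must invoke uniqueness to exclude other solutions and use the blow-up of $|z(t)|$ to certify maximality, rather than just the formula's domain. Both follow once the $C^1$ property from Step~3 is in hand, so I would prove Step~3 before Step~2.
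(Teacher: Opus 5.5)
Your proposal is correct and follows essentially the same route as the paper. Both proofs differentiate the closed form directly. Both compute $DV(\mathbf{x})=-\frac{1}{R}\bigl(|\mathbf{x}|I+\mathbf{x}\otimes\mathbf{x}/|\mathbf{x}|\bigr)$ and use the $-t|t|/R$ test to show $V$ is not $C^2$. Both then get uniqueness from the $C^1$ (hence locally Lipschitz) property and use backward blow-up to certify the maximal interval. The only cosmetic difference is that the paper obtains $DV(0)=0$ directly from $|V(\mathbf{x})|=|\mathbf{x}|^2/R=o(|\mathbf{x}|)$ rather than through the mean value theorem.
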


\begin{proof}
The case $z_0=0$ is trivial. Suppose $z_0=\rho_0 e^{i\theta_0}$ with $\rho_0>0$. On the maximal interval $I=(-R/\rho_0,\infty)$,
\[
  f_R^{\,t}(z_0)=\frac{\rho_0 R}{R+t\rho_0}\,e^{i\theta_0}=:\rho(t)e^{i\theta_0}.
\]
Therefore
\[
  \dot\rho(t)=-\frac{\rho_0^2R}{(R+t\rho_0)^2}=-\frac{\rho(t)^2}{R}.
\]
Since the argument is constant,
\[
  \frac{d}{dt}f_R^{\,t}(z_0)=\dot\rho(t)e^{i\theta_0}
  =-\frac{\rho(t)^2}{R}e^{i\theta_0}
  =-\frac{|z(t)|}{R}z(t),
\]
where $z(t)=f_R^{\,t}(z_0)$. Thus the curve solves the ODE on $I$.

\emph{Regularity of $V$.} Identify $\mathbb{C}$ with $\mathbb{R}^2$ and write $V(\mathbf{x})=-|\mathbf{x}|\,\mathbf{x}/R$. Away from the origin $V$ is smooth; by the product rule and $D|\mathbf{x}|=\mathbf{x}^{\!\top}\!/|\mathbf{x}|$,
\[
  DV(\mathbf{x})=-\frac{1}{R}\Bigl(|\mathbf{x}|\,I+\frac{\mathbf{x}\otimes\mathbf{x}}{|\mathbf{x}|}\Bigr),
  \qquad \mathbf{x}\ne 0.
\]
At the origin, $|V(\mathbf{x})|=|\mathbf{x}|^2/R=o(|\mathbf{x}|)$, so $V$ is differentiable there with $DV(0)=0$; and since $\bigl\||\mathbf{x}|\,I\bigr\|=|\mathbf{x}|$ and $\bigl\|\mathbf{x}\otimes\mathbf{x}/|\mathbf{x}|\bigr\|=|\mathbf{x}|$, both terms above tend to $0$ as $\mathbf{x}\to 0$, so $DV(\mathbf{x})\to 0=DV(0)$. Hence $V$ is $C^1$ on all of $\mathbb{R}^2$, in particular locally Lipschitz. It is not $C^2$ at the origin: along any unit vector $\mathbf{u}$, the radial component $g(t):=\mathbf{u}\cdot V(t\mathbf{u})=-t|t|/R$ has one-sided second derivatives $-2/R$ from the right and $+2/R$ from the left at $t=0$, so $DV$ fails to be differentiable there. This proves the regularity statements in the proposition.

It remains to see why $I=(-R/\rho_0,\infty)$ is the \emph{maximal} interval. Since $V$ is $C^1$ by the previous step, hence locally Lipschitz, the solution through $z_0$ is unique \cite{teschl2012}, so the curve displayed above is \emph{the} solution wherever it exists. Forward in time the denominator $R+t\rho_0$ only grows, so the solution exists for all $t\ge 0$ (indeed $|z(t)|$ decreases to $0$). Backward in time, as $t\downarrow -R/\rho_0$ the denominator tends to $0^+$ and
\[
  |z(t)|=\frac{\rho_0 R}{R+t\rho_0}\longrightarrow\infty:
\]
the solution blows up in finite backward time, leaving every compact subset of $\mathbb{C}$, and therefore cannot be continued to $t=-R/\rho_0$ or beyond. Hence no solution exists on any larger interval, and $I$ is maximal. (Geometrically, $-R/\rho_0$ is the backward time at which the reciprocal radius $1/|z(t)|=1/\rho_0+t/R$ reaches $0$, i.e.\ at which the point is pushed to infinity.)
\end{proof}

The cone projection family is therefore the time-$1$ map of an explicit radial inward-pointing vector field of magnitude $|z|^2/R$. Thus the radial speed is quadratic in the radius: writing $\rho(t)=|z(t)|$, the flow obeys $\dot\rho=-\rho^2/R$. Near the origin this speed vanishes to \emph{second} order, so the flow has no exponential contraction rate there (unlike a linear field $\dot z=-cz$, whose solutions decay like $e^{-ct}$), which is the dynamical counterpart of $Df_R(0)=I$ (the map is tangent to the identity at the origin). The same ODE becomes transparent in the reciprocal-radius coordinate $u=1/\rho$: by the chain rule $\dot u=-\dot\rho/\rho^2=1/R$, so $u(t)=u(0)+t/R$ moves at constant speed $1/R$. This is the flow form of the reciprocal-radius law of Proposition~\ref{prop:lens}.

\section{Projective structure on rays}\label{sec:cross-ratio}

The radial action $\rho\mapsto R\rho/(R+\rho)$ is a one-dimensional M\"obius transformation, with $2\times 2$ matrix
\[
  M_R \;=\; \begin{pmatrix} R & 0 \\ 1 & R \end{pmatrix}, \qquad \det M_R = R^2 \ne 0.
\]
Here $M_R$ acts on the radial coordinate $\rho\in[0,\infty]$ by the usual rule $\begin{pmatrix} a & b \\ c & d \end{pmatrix}\!\cdot\rho=(a\rho+b)/(c\rho+d)$, which for $M_R$ gives
\[
  M_R\cdot\rho \;=\; \frac{R\rho+0}{1\cdot\rho+R} \;=\; \frac{R\rho}{R+\rho} \;=\; |f_R(z)|\quad\text{when } \rho=|z|,
\]
so the matrix indeed represents the radial action of $f_R$; the two boundary values are $M_R\cdot 0=0$ and $M_R\cdot\infty=R$. General theory (every M\"obius transformation preserves the cross-ratio of any four points \cite{needham1997}) therefore implies that $f_R$ preserves the cross-ratio of any four points on a common ray from the origin. Throughout this section the cross-ratio of four collinear points is taken in the ordering convention
\[
  (A,B;C,D)=\frac{(A-C)(B-D)}{(A-D)(B-C)}.
\]
We give the explicit calculation.

\begin{proposition}[Cross-ratio preservation on rays]\label{prop:cross-ratio}
Let $z_j=\rho_j\,e^{i\theta}\in\mathbb{C}$, $j=1,2,3,4$, share a common argument $\theta$, with distinct moduli $\rho_j\ge 0$. (When one of the points is $O$, with $\rho_j=0$, we regard $O$ as lying on the chosen ray, even though it has no unique argument.) Set $s_j:=|f_R(z_j)|=R\rho_j/(R+\rho_j)$. Then
\[
  \frac{(s_1-s_3)(s_2-s_4)}{(s_1-s_4)(s_2-s_3)} \;=\; \frac{(\rho_1-\rho_3)(\rho_2-\rho_4)}{(\rho_1-\rho_4)(\rho_2-\rho_3)}.
\]
\end{proposition}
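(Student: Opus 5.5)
The plan is to compute the pairwise differences of the image moduli directly and watch the conformal-type factors cancel. From \eqref{eq:recap-mag}, $s_j=R\rho_j/(R+\rho_j)$, and for any $j,k$ one gets
\[
  s_j-s_k=\frac{R\rho_j(R+\rho_k)-R\rho_k(R+\rho_j)}{(R+\rho_j)(R+\rho_k)}
  =\frac{R^2(\rho_j-\rho_k)}{(R+\rho_j)(R+\rho_k)}.
\]
This is the single key identity. It is simply the statement that the matrix $M_R$ has determinant $R^2$ and denominator $c\rho+d=\rho+R$. Since $R+\rho_j>0$, this also shows that the $s_j$ are distinct whenever the $\rho_j$ are, so the left-hand side is well defined.

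Substituting into the cross-ratio, the numerator $(s_1-s_3)(s_2-s_4)$ becomes
\[
  R^4(\rho_1-\rho_3)(\rho_2-\rho_4)\big/\bigl[(R+\rho_1)(R+\rho_3)(R+\rho_2)(R+\rho_4)\bigr].
\]
The denominator $(s_1-s_4)(s_2-s_3)$ becomes
\[
  R^4(\rho_1-\rho_4)(\rho_2-\rho_3)\big/\bigl[(R+\rho_1)(R+\rho_4)(R+\rho_2)(R+\rho_3)\bigr].
\]
Each index appears exactly once in each product, so both carry the same factor $R^4/\prod_{j=1}^4(R+\rho_j)$. That factor cancels, leaving the cross-ratio of the $\rho_j$.

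There is essentially no obstacle. The only points to check are bookkeeping ones. First, every index occurs exactly once in the numerator and once in the denominator of the chosen cross-ratio convention; this is what makes the cancellation work. Second, the case $\rho_j=0$ needs no special treatment, because $s_j=0$ and the difference formula still holds. Finally, I would remark that the argument $\theta$ plays no role beyond placing the $f_R(z_j)$ on the same ray, so the moduli serve as affine coordinates along that ray.
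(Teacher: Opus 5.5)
Your proof is correct and follows the paper's argument essentially verbatim: the difference identity $s_j-s_k=R^2(\rho_j-\rho_k)/\bigl((R+\rho_j)(R+\rho_k)\bigr)$, then cancellation of $R^4$ and of each factor $R+\rho_j$, which occurs once in the numerator and once in the denominator. Your extra check that the $s_j$ stay distinct, so the left-hand side is well defined, is a small point the paper leaves implicit.
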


\begin{proof}
For $i\ne j$,
\[
  s_i-s_j=\frac{R\rho_i}{R+\rho_i}-\frac{R\rho_j}{R+\rho_j}
  =\frac{R^2(\rho_i-\rho_j)}{(R+\rho_i)(R+\rho_j)}.
\]
Substituting this expression for the four differences in the displayed cross-ratio, the factor $R^4$ cancels and each factor $R+\rho_j$ appears once in the numerator and once in the denominator. The remaining expression is
\[
  \frac{(\rho_1-\rho_3)(\rho_2-\rho_4)}{(\rho_1-\rho_4)(\rho_2-\rho_3)}.
\]
\end{proof}

This is the projective signature of the cone construction. While $f_R:\mathbb{C}\to\mathbb{C}$ is not a M\"obius transformation of $\mathbb{C}$ (it is not even holomorphic, since it depends on $|z|$), its restriction to any single ray through the origin \emph{is} a M\"obius transformation. The cone construction realizes this family geometrically: it applies one and the same one-dimensional M\"obius map $\rho\mapsto R\rho/(R+\rho)$ on every ray from the origin, and the rotational symmetry of the cone carries the construction on one ray to the construction on any other, which is why a single map suffices for all rays. We emphasize that the preservation statement is projective on each ray separately; it is \emph{not} a cross-ratio theorem for arbitrary collinear quadruples in $\mathbb{C}$: on a full line through the origin, consisting of two opposite rays, the radial action depends on $|z|$ and is not a M\"obius function of the signed coordinate.

\begin{remark}
Allowing $\rho=\infty$ (the point at infinity in the projective completion of the ray), the radial M\"obius map sends $\infty\mapsto R$. Cross-ratios involving $\infty$ are preserved in the usual sense. With the convention used in Proposition~\ref{prop:cross-ratio},
\[
  (\rho_1,\rho_2;\rho_3,\infty)
  \;=\; \frac{\rho_1-\rho_3}{\rho_2-\rho_3},
\]
while the image cross-ratio is
\[
  (s_1,s_2;s_3,R)
  \;=\;
  \frac{(s_1-s_3)(s_2-R)}{(s_1-R)(s_2-s_3)}.
\]
Substituting $s_j=R\rho_j/(R+\rho_j)$ into the latter expression gives the former one; thus the preservation statement remains correct.
\end{remark}

\subsection{Forward and backward maps as harmonic conjugates}

Proposition~\ref{prop:cross-ratio} showed that $f_R$ preserves the cross-ratio of four points on a common ray. We close the projective discussion with a special case relating $f_R$ to its inverse. For $0<|p|<R$ the inverse
\[
  f_R^{-1}(p)=\frac{p}{\,1-\frac{1}{R}|p|\,}
\]
is defined and, like $f_R$, is radial; recall from Theorem~\ref{thm:group} that it equals $\phi_{-1/R}$. The restriction $0<|p|<R$ is needed only so that this backward image $f_R^{-1}(p)$ exists. We use the classical projective-geometric language of cross-ratios and harmonic ranges; see, for example, \cite{veblenyoung1910,coxeterPG1987}.

\begin{proposition}\label{prop:harmonic}
With the cross-ratio convention of Proposition~\ref{prop:cross-ratio},
for every point $p$ with $0<|p|<R$, the four points $O$, $p$, $f_R(p)$,
and $f_R^{-1}(p)$, all lying on the ray $Op$, form a harmonic range:
\[
  \bigl(O,\,p;\ f_R(p),\,f_R^{-1}(p)\bigr)=-1.
\]
Equivalently, $f_R(p)$ and $f_R^{-1}(p)$ are harmonic conjugates with respect to $O$ and $p$.
\end{proposition}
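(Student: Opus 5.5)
Since all four points lie on the ray $Op$, the plan is to reduce to radial coordinates on that ray and compute the cross-ratio directly in the convention of Proposition~\ref{prop:cross-ratio}. Write $\rho=|p|\in(0,R)$. The four points then have moduli $0$, $\rho$,
\[
  s:=|f_R(p)|=\frac{R\rho}{R+\rho},\qquad t:=|f_R^{-1}(p)|=\frac{R\rho}{R-\rho},
\]
and these are positive and finite precisely because $0<\rho<R$, which is where the hypothesis is used. They are also distinct, since $0<s<\rho<t$.

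With $A=O$, $B=p$, $C=f_R(p)$, $D=f_R^{-1}(p)$, the cross-ratio is
\[
  \frac{(0-s)(\rho-t)}{(0-t)(\rho-s)}=\frac{s\,(t-\rho)}{t\,(\rho-s)} .
\]
Rather than substituting the fractions directly, I would use the reciprocal coordinate. By Proposition~\ref{prop:lens} and Theorem~\ref{thm:group} with $\kappa=\pm1/R$, we have $1/s=1/\rho+1/R$ and $1/t=1/\rho-1/R$. Dividing numerator and denominator by $s\,t\,\rho$ gives
\[
  t-\rho=t\rho\Bigl(\frac1\rho-\frac1t\Bigr)=\frac{t\rho}{R},
  \qquad
  \rho-s=s\rho\Bigl(\frac1s-\frac1\rho\Bigr)=\frac{s\rho}{R}.
\]
The ratio is therefore $\dfrac{s\cdot t\rho/R}{t\cdot s\rho/R}=1$ in absolute value. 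Tracking signs, the numerator $(0-s)(\rho-t)$ is positive and the denominator $(0-t)(\rho-s)$ is negative, so the value is $-1$.

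The only delicate point is bookkeeping the sign and ordering convention, so that the value comes out as $-1$ and not $+1$ or $2$. I would double-check this by noting that $C$ and $D$ separate $A=O$ from $B=p$ in the required way, with $s<\rho<t$, so that exactly one of $C,D$ lies between $A$ and $B$. As an independent check, I would verify the equivalent harmonic-mean form $2/\rho=1/s+1/t$, which follows by adding the two lens identities. The statement is the familiar characterization of a harmonic range with one point at the origin, and it matches the reading recorded in the remark relating Proposition~\ref{prop:harmonic} to focal chords.
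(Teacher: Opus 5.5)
Your argument is the paper's own computation: you place the four points in the radial coordinate of the ray $Op$ and evaluate the cross-ratio directly. The only variation is that you simplify $t-\rho$ and $\rho-s$ to $t\rho/R$ and $s\rho/R$ using the reciprocal identities $1/s=1/\rho+1/R$ and $1/t=1/\rho-1/R$, where the paper substitutes the explicit fractions $\zeta^2/(R+\zeta)$ and $-\zeta^2/(R-\zeta)$.

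One sign slip needs fixing. Since $(0-t)(\rho-s)=-t(\rho-s)$, your first display should read $\frac{(0-s)(\rho-t)}{(0-t)(\rho-s)}=-\frac{s(t-\rho)}{t(\rho-s)}$. As written, its right-hand side equals $+1$, and only your separate sign count afterwards recovers the correct value $-1$.
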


\begin{proof}
All four points lie on the ray $Op$; parameterize it by the positive radial coordinate measured from $O$ and write $\zeta=|p|$. Then
\[
  O\leftrightarrow 0,\qquad p\leftrightarrow\zeta,\qquad
  u:=f_R(p)\leftrightarrow\frac{R\zeta}{R+\zeta},\qquad
  v:=f_R^{-1}(p)\leftrightarrow\frac{R\zeta}{R-\zeta},
\]
the last two by \eqref{eq:mag} and its inverse. The cross-ratio is computed in this ray coordinate:
\[
  \bigl(O,p;u,v\bigr)
  =\frac{(0-u)(\zeta-v)}{(0-v)(\zeta-u)}
  =\frac{(-u)(\zeta-v)}{(-v)(\zeta-u)}
  =\frac{u}{v}\cdot\frac{\zeta-v}{\zeta-u}.
\]
Compute each factor. First,
\[
  \frac{u}{v}=\frac{R\zeta/(R+\zeta)}{R\zeta/(R-\zeta)}=\frac{R-\zeta}{R+\zeta}.
\]
Next,
\[
  \zeta-u=\zeta-\frac{R\zeta}{R+\zeta}=\frac{\zeta(R+\zeta)-R\zeta}{R+\zeta}
         =\frac{\zeta^{2}}{R+\zeta},
\]
\[
  \zeta-v=\zeta-\frac{R\zeta}{R-\zeta}=\frac{\zeta(R-\zeta)-R\zeta}{R-\zeta}
         =\frac{-\zeta^{2}}{R-\zeta}.
\]
Therefore
\[
  \frac{\zeta-v}{\zeta-u}
  =\frac{-\zeta^{2}/(R-\zeta)}{\zeta^{2}/(R+\zeta)}
  =-\,\frac{R+\zeta}{R-\zeta},
\]
and multiplying the two factors,
\[
  \bigl(O,p;u,v\bigr)
  =\frac{R-\zeta}{R+\zeta}\cdot\left(-\frac{R+\zeta}{R-\zeta}\right)=-1.
  \qedhere
\]
\end{proof}

This harmonic relation is the projective-geometric form of the reciprocal-radius law of Proposition~\ref{prop:lens}, equivalently the curvature law for the concentric circles centered at $O$. Write $\kappa=1/R$. By Proposition~\ref{prop:lens} and the group law of Theorem~\ref{thm:group},
\[
  \frac{1}{|f_R(p)|}=\frac{1}{|p|}+\kappa,
  \qquad
  \frac{1}{|f_R^{-1}(p)|}=\frac{1}{|p|}-\kappa,
\]
so the forward and backward maps shift the reciprocal radius $1/|p|$ by the same amount $\kappa$, in opposite directions. Since $1/\rho$ is the curvature of the circle of radius $\rho$ centered at $O$, the two image points lie on concentric circles whose curvatures differ from that of the circle centered at $O$ and passing through $p$ by exactly $\pm\kappa$. Read on the ray through $p$ in projective terms, this symmetric pair of reciprocal-radius shifts is precisely the harmonic-conjugacy relation proved above.

The harmonic relation is in fact exclusive to the family $\phi_\kappa$ of
Theorem~\ref{thm:group}: within the class of radial M\"obius maps, the
identity of Proposition~\ref{prop:harmonic} \emph{characterizes} the flow.

\begin{proposition}[Harmonic conjugacy characterizes the flow]\label{prop:harmonic-char}
Let $\Phi(\rho e^{i\theta})=\sigma(\rho)e^{i\theta}$ be a ray-preserving
map, distinct from the identity, whose radial action $\sigma$ is a
M\"obius function of the radius, and suppose there is a nonempty open
interval $I\subset(0,\infty)$ of radii on which $\sigma$ and
$\sigma^{-1}$ are both defined and positive and
\[
  \bigl(O,\,p;\ \Phi(p),\,\Phi^{-1}(p)\bigr)=-1
  \qquad\text{whenever } |p|\in I.
\]
Then $\Phi=\phi_\kappa$ for some $\kappa\neq 0$, i.e.\
$\sigma(\rho)=\rho/(1+\kappa\rho)$. Conversely, every $\phi_\kappa$ with
$\kappa\neq 0$ satisfies the identity for all $p$ with
$0<|p|<1/|\kappa|$.
\end{proposition}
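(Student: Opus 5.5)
The plan is to turn the harmonic identity into an equation for $\sigma$ in the reciprocal coordinate and then exploit the Möbius form. For $|p|=\zeta\in I$, write $u=\sigma(\zeta)$ and $v=\sigma^{-1}(\zeta)$; all three are positive. Computing $(O,p;u,v)=\frac{u}{v}\cdot\frac{\zeta-v}{\zeta-u}$ as in the proof of Proposition~\ref{prop:harmonic} and setting it equal to $-1$ gives $u(\zeta-v)=-v(\zeta-u)$, that is, $2uv=\zeta(u+v)$. Dividing by $\zeta uv$ yields
\[
  \frac{1}{\sigma(\zeta)}+\frac{1}{\sigma^{-1}(\zeta)}=\frac{2}{\zeta}\qquad(\zeta\in I),
\]
so $\zeta=\mathrm{H}(\sigma(\zeta),\sigma^{-1}(\zeta))$. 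In the variable $t=1/\rho$, the conjugated map $\tau(t):=1/\sigma(1/t)$ is again Möbius, since it is a Möbius map conjugated by the involution $t\mapsto 1/t$. The condition then reads $\tau(t)+\tau^{-1}(t)=2t$ on the open set $J=\{1/\zeta:\zeta\in I\}$.

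Next I would write $g(t):=\tau(t)-t$ and note that $\tau(t)+\tau^{-1}(t)=2t$ says that $t$ is the midpoint of $\tau^{-1}(t)$ and $\tau(t)$. Applying the relation at the point $\tau(t)$, I want to conclude $g(\tau(t))=g(t)$, so that the orbit of $t$ is arithmetic. This step requires $\tau(t)\in J$, which need not hold, so I would avoid it. Instead I would argue algebraically: both sides of $\tau(t)+\tau^{-1}(t)=2t$ are rational functions of $t$, and they agree on an open interval, so they agree identically as rational functions wherever defined.

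Now let $\tau(t)=(at+b)/(ct+d)$ with $ad-bc\neq0$, so that $\tau^{-1}(t)=(dt-b)/(-ct+a)$. The main computational step is to clear denominators in
\[
  \frac{at+b}{ct+d}+\frac{dt-b}{a-ct}=2t
\]
and compare coefficients. If $c\neq0$, the left side has poles at $t=-d/c$ and $t=a/c$ unless these cancel, whereas the right side is a polynomial. I expect this to force a contradiction or a degenerate matrix, and checking that cleanly is the step I expect to be the main obstacle: the case $a=-d$ with coincident poles needs care, since the residues might then cancel. Residues give this directly. The residue of the first term at $t=-d/c$ is $(ad-bc)/c^2$, and that of the second term at $a/c$ is $-(ad-bc)/c^2$. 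When the poles coincide ($a=-d$) the total residue is $0$, so the poles cancel; the sum then becomes a constant, $(a-d)/c=2a/c$, which cannot equal $2t$. When the poles are distinct, nonzero residues survive, again a contradiction. Hence $c=0$ and $\tau(t)=\alpha t+\beta$ with $\alpha\neq0$, so $\tau^{-1}(t)=(t-\beta)/\alpha$. The identity then gives $\alpha+1/\alpha=2$ and $\beta-\beta/\alpha=0$, hence $\alpha=1$ with $\beta$ arbitrary. Since $\Phi\neq\mathrm{id}$, we get $\beta=:\kappa\neq0$.

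Finally I would translate back: $1/\sigma(\rho)=1/\rho+\kappa$, that is, $\sigma(\rho)=\rho/(1+\kappa\rho)$, so $\Phi=\phi_\kappa$ wherever $\sigma$ is defined. The converse is Proposition~\ref{prop:harmonic} with $R=1/\kappa$ when $\kappa>0$. For $\kappa<0$, the same computation with $u=\zeta/(1+\kappa\zeta)$ and $v=\zeta/(1-\kappa\zeta)$ applies, or equivalently one uses the displayed reciprocal identity, which holds because $1/u$ and $1/v$ equal $1/\zeta\pm\kappa$. The range $0<|p|<1/|\kappa|$ is exactly the set where both images exist and are positive.
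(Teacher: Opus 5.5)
Your route is essentially the paper's: pass to the reciprocal coordinate, turn the harmonic condition into the rational-function identity $\tau+\tau^{-1}=2\,\mathrm{id}$, rule out $c\neq 0$, and solve the affine case to obtain a translation. Your cross-ratio reduction, the affine case ($\alpha+1/\alpha=2$), the translation back to $\sigma(\rho)=\rho/(1+\kappa\rho)$, and the converse are all correct.

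The one place you depart from the paper is the exclusion of $c\neq 0$, and there your residue computation is wrong. The residue of $\tau$ at $t=-d/c$ is $\bigl(a(-d/c)+b\bigr)/c=-(ad-bc)/c^2$. The residue of $\tau^{-1}(t)=(dt-b)/(a-ct)$ at $t=a/c$ is also $-(ad-bc)/c^2$. So the two residues are equal, not opposite.

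This means your coincident-pole subcase rests on a false premise. When $a=-d$, the map $\tau$ is an involution ($\tau^{-1}=\tau$), so the sum is $2\tau(t)$. That sum still has a pole with residue $-2(ad-bc)/c^2\neq 0$, and it is not the constant $2a/c$; for example, $\tau(t)=1/t$ gives the sum $2/t$.

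The conclusion survives, and the repair is simpler than your case split. With the correct sign, a nonzero residue survives whether or not the poles coincide, so $\tau+\tau^{-1}$ can never equal the polynomial $2t$ when $c\neq 0$.

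Alternatively, use the paper's degree count. Write
\[
2t-\tau(t)=\frac{2ct^2+(2d-a)t-b}{ct+d}.
\]
The numerator evaluates to $(ad-bc)/c\neq 0$ at $t=-d/c$. Hence this is a rational function of degree two, and it cannot coincide on an interval with the degree-one map $\tau^{-1}$.
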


\begin{proof}
Fix a ray and use its positive radial coordinate as in the proof of
Proposition~\ref{prop:harmonic}, writing $\zeta=|p|$, $x=\sigma(\zeta)$,
$y=\sigma^{-1}(\zeta)$. In the ordering convention of
Proposition~\ref{prop:cross-ratio},
\[
  \bigl(0,\zeta;x,y\bigr)=\frac{x(\zeta-y)}{y(\zeta-x)}=-1
  \quad\Longleftrightarrow\quad
  \zeta(x+y)=2xy
  \quad\Longleftrightarrow\quad
  \frac{1}{x}+\frac{1}{y}=\frac{2}{\zeta}.
\]
Pass to the reciprocal coordinate $u=1/\zeta$ and let
$g(u):=1/\sigma(1/u)$ be the reciprocal conjugate of $\sigma$; then
$1/\sigma^{-1}(1/u)=g^{-1}(u)$, and the identity reads
\[
  g(u)+g^{-1}(u)=2u,
  \qquad\text{i.e.}\qquad
  g^{-1}=2\,\mathrm{id}-g
\]
on an open interval. Since $\sigma$ is M\"obius, so is $g$; write
$g(u)=(au+b)/(cu+d)$ with $ad-bc\neq 0$. If $c\neq 0$, then
$2u-g(u)=\bigl(2cu^{2}+(2d-a)u-b\bigr)/(cu+d)$, and polynomial division
shows the numerator is divisible by $cu+d$ only when $ad-bc=0$; so
$2\,\mathrm{id}-g$ is a rational function of degree two, which cannot
agree with the M\"obius function $g^{-1}$ on an open interval. Hence
$c=0$ and $g(u)=\alpha u+\beta$ is affine with $\alpha\neq0$. Then
$g^{-1}(u)=(u-\beta)/\alpha$, and
\[
  \alpha u+\beta+\frac{u-\beta}{\alpha}=2u
  \quad\text{on an interval}
  \quad\Longrightarrow\quad
  \alpha+\frac{1}{\alpha}=2
  \quad\Longrightarrow\quad
  (\alpha-1)^{2}=0,
\]
so $\alpha=1$ and $\beta$ is unconstrained: $g$ is the translation
$u\mapsto u+\beta$. Undoing the conjugation, $1/\sigma(\rho)=1/\rho+\beta$,
i.e.\ $\sigma(\rho)=\rho/(1+\beta\rho)$ and $\Phi=\phi_\beta$; and
$\beta\neq0$ since $\Phi$ is not the identity. The converse is the
computation preceding this proposition with $\kappa=\beta$: the reciprocal
shifts $\pm\kappa$ are symmetric about $1/|p|$, which is the displayed
harmonic condition; both $\phi_\kappa(p)$ and
$\phi_\kappa^{-1}(p)=\phi_{-\kappa}(p)$ are defined and positive exactly
when $1\pm\kappa|p|>0$, i.e.\ $0<|p|<1/|\kappa|$.
\end{proof}

\begin{remark}
With $\kappa=1/R$ this recovers and sharpens
Proposition~\ref{prop:harmonic}: among radial M\"obius maps, the single
projective identity $\bigl(O,p;\,\Phi(p),\Phi^{-1}(p)\bigr)=-1$ pins down
the one-parameter family $\phi_\kappa$, with only the parameter left
free. It is thus a projective companion to the Normalized
characterization of Theorem~\ref{thm:char}, where boundedness and
first-order normalization select the single member $f_R$. The M\"obius
hypothesis on the radial action cannot simply be dropped: as a bare
functional equation on bijections, $g^{-1}=2\,\mathrm{id}-g$ admits
discontinuous solutions other than translations (for example,
$g(u)=u+1$ for $u$ rational and $g(u)=u+2$ for $u$ irrational), so some
rigidity hypothesis is needed.
\end{remark}

\section{Further constructions and interpretations}\label{sec:alt-constructions}

The lens-equation identity (Proposition~\ref{prop:lens}) is the algebraic skeleton of $f_R$, and any geometric setup encoding the relation $1/|f_R(z)|=1/|z|+1/R$ produces $f_R$ on each ray from the origin. We collect several constructions, interpretations, and characterizations related to this relation, beyond the cone of Section~\ref{sec:setup}. The subsections below are independent of one another and may be read in any order.

\subsection{The planar two-line construction}\label{sub:planar}

The cone construction reduces to a strictly two-dimensional similar-triangles diagram: the side view of Figure~\ref{fig:side} is itself a complete construction of $f_R$, requiring no third dimension.

\begin{proposition}\label{prop:planar}
Fix $z\in\mathbb{C}\setminus\{0\}$ and work in an auxiliary $2$-dimensional construction plane $\Pi_z$. Inside $\Pi_z$, fix a baseline $\ell_0$ with a chosen origin $O\in\ell_0$ and a parallel line $\ell_h$ at distance $h>0$. Choose $Z\in\ell_0$ with $|OZ|=|z|$; choose $T\in\ell_h$, and $S\in\ell_h$ with $\overrightarrow{TS}$ parallel to $\overrightarrow{OZ}$, of the same sense and length $R$. Draw the diagonals $L_1=\overline{ZT}$ and $L_2=\overline{OS}$ of the trapezoid $OZST$, meeting at $P$. Let the line through $P$ parallel to $\ell_0$ meet the leg $\overline{OT}$ at $Q$ (Figure~\ref{fig:planar}). Then $|PQ|=R|z|/(R+|z|)$, independent of $h$ and of the position of $T$ on $\ell_h$. Placing this length on the ray from $0$ through $z$ in $\mathbb{C}$ recovers $f_R(z)$.
\end{proposition}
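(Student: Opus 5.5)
The plan is to put the trapezoid $OZST$ in affine coordinates inside $\Pi_z$, with $O=(0,0)$, $\ell_0$ the horizontal axis and $Z=(\rho,0)$, where $\rho:=|z|$. Then $\ell_h=\{y=h\}$, and we write $T=(t,h)$ and $S=(t+R,h)$. The length $|PQ|$ is measured along a horizontal line, so it is just the difference of abscissae. This normalization costs nothing, because every hypothesis is affine except for the distances $\rho$ and $R$, which are taken along parallel lines.

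First I will locate $P$. The diagonals $\overline{ZT}$ and $\overline{OS}$ join the two parallel sides. By the similar triangles $PZO$ and $PTS$ (the sides $OZ$ and $TS$ are parallel), $P$ divides each diagonal in the ratio $|OZ|:|TS|=\rho:R$. Hence $P$ lies at height $y_P=h\rho/(\rho+R)$, measured from $\ell_0$. Explicitly, $P=O+\tfrac{\rho}{\rho+R}(S-O)$, which gives $x_P=\rho(t+R)/(\rho+R)$.

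Next I will locate $Q$ on the leg $\overline{OT}$. It sits at the same height, so $Q=\tfrac{y_P}{h}\,T=\tfrac{\rho}{\rho+R}(t,h)$ and $x_Q=\rho t/(\rho+R)$. Subtracting the abscissae gives
\[
|PQ|=x_P-x_Q=\frac{\rho R}{\rho+R},
\]
and both $t$ and $h$ cancel. By \eqref{eq:mag} this equals $|f_R(z)|$. Placing this length on the ray from $0$ through $z$ then reproduces $f_R(z)=\tfrac{R}{R+|z|}\,z$, by the radial form recorded in Proposition~\ref{prop:p10}.

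The only step that needs care is justifying that the diagonals really meet inside the trapezoid and that $P$ and $Q$ lie on the same side, so that $x_P-x_Q>0$ and no absolute value or sign ambiguity enters. This is where the hypothesis that $\overrightarrow{TS}$ has the same sense as $\overrightarrow{OZ}$ is used: it makes $OZST$ a genuine convex trapezoid, not a crossed one, so the intersection parameters $\rho/(\rho+R)$ lie in $(0,1)$. As a remark, the Side view of Figure~\ref{fig:side} is the special case $t=0$, where $\overline{OT}$ is the cone axis. In that case the computation is exactly the elimination of $l$ and $h$ carried out in Proposition~\ref{prop:p10}.
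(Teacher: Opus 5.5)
Your proof is correct and essentially the paper's: both rest on the similarity $\triangle OPZ\sim\triangle SPT$, which shows that $P$ divides the diagonals in the ratio $|z|:R$. The paper then finishes synthetically with $\triangle TQP\sim\triangle TOZ$, while you finish in coordinates ($T=(t,h)$, $S=(t+R,h)$) by subtracting abscissae, which makes the cancellation of $t$ and $h$ explicit.
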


\begin{proof}
Apart from the prescribed lengths $|OZ|=|z|$ and $|TS|=R$, the proof uses only the parallelism $\overline{OZ}\parallel\overline{TS}$ (equivalently, $\ell_0\parallel\ell_h$); neither $h$ nor the horizontal position of $T$ enters. In triangle $TOZ$ the segment $\overline{PQ}$ is parallel to the base $\overline{OZ}$, with $Q$ on $\overline{TO}$ and $P$ on $\overline{TZ}$, so the similar triangles $\triangle TQP\sim\triangle TOZ$ give $|PQ|/|OZ|=|TP|/|TZ|$. Because $\overline{OZ}\parallel\overline{TS}$, the diagonals cut the vertically opposite similar triangles $\triangle OPZ\sim\triangle SPT$ in the ratio $|OZ|:|TS|=|z|:R$, whence $|PZ|:|TP|=|z|:R$ and $|TP|/|TZ|=|TP|/(|TP|+|PZ|)=R/(R+|z|)$. Therefore $|PQ|=|OZ|\cdot R/(R+|z|)=R|z|/(R+|z|)$, independent of $h$. By rotation equivariance of $f_R$, transporting this length to the ray $Oz\subset\mathbb{C}$ yields $f_R(z)$.
\end{proof}

In the normalized realization drawn in Figure~\ref{fig:planar} ($T=(0,h)$ directly above $O$, and $S=(R,h)$), the leg $\overline{OT}$ is vertical, so $|PQ|$ is also the horizontal coordinate of $P$, namely $R|z|/(R+|z|)$. (For a shifted upper base the length $|PQ|$ is unchanged, but $P$'s horizontal coordinate is not.) This realizes the classical compass-and-straightedge construction of the parallel combination
\[
  \frac{1}{1/R+1/|z|}=\frac{R|z|}{R+|z|}.
\]
This quantity is exactly one half of the harmonic mean of the two parallel sides $|z|$ and $R$. Indeed, the segment $\overline{QW}$ through $P$ parallel to the bases, with $Q$ on the leg $\overline{OT}$ and $W$ on the leg $\overline{ZS}$, has length $|QW|=2R|z|/(R+|z|)$, and $P$ bisects it, so $|PQ|=\tfrac12|QW|=R|z|/(R+|z|)=|f_R(z)|$. That the segment through the intersection of the diagonals of a trapezoid, parallel to its bases and with endpoints on the legs, has length the harmonic mean of the two bases is classical~\cite{posamentier1996,charosh1965}. Stood on end (with the two parallel sides drawn as verticals of heights $|z|$ and $R$ over a common base and the diagonals crossing between them), this is the configuration sometimes called \emph{crossed ladders}, in which the crossing height equals the parallel combination $R|z|/(R+|z|)$ independently of the base width. The base-width independence is the planar shadow of the height independence of Proposition~\ref{prop:p10}, and the parallel-combination law is the same reciprocal-addition rule $1/|f_R(z)|=1/|z|+1/R$ that governs resistors in parallel (Proposition~\ref{prop:lens}).

\begin{figure}[htbp]
\centering
\begin{tikzpicture}[scale=1.7, line cap=round, line join=round, font=\small,
                    >={Latex[length=2.2mm]}]
  \coordinate (O) at (0,0);
  \coordinate (Z) at (2,0);
  \coordinate (T) at (0,3.4);
  \coordinate (S) at (4,3.4);
  \coordinate (P)  at (1.3333,1.1333);
  \coordinate (Pl) at (0,1.1333);
  \coordinate (Pr) at (2.6667,1.1333);
  \draw[gray!60!black] (-0.7,0)   -- (5.0,0)   node[right]{$\ell_0$};
  \draw[gray!60!black] (-0.7,3.4) -- (5.0,3.4) node[right]{$\ell_h$};
  \draw[gray!60!black] (O) -- (T);
  \draw[gray!60!black] (Z) -- (S);
  \draw[thick] (Z) -- (T) node[pos=2/3, sloped, above, fill=white, inner sep=1pt] {$L_1=\overline{ZT}$};
  \draw[thick] (O) -- (S) node[pos=2/3, sloped, above, fill=white, inner sep=1pt] {$L_2=\overline{OS}$};
  \draw[gray!55!black, thick, dashed] (P) -- (Pr);
  \draw[blue!70!black, very thick] (Pl) -- (P);
  \draw[decorate, decoration={brace, amplitude=4pt, mirror, raise=3pt}, gray!50!black]
        (O) -- (Z) node[midway, below=5pt]{$|z|$};
  \draw[decorate, decoration={brace, amplitude=4pt, raise=3pt}, gray!50!black]
        (T) -- (S) node[midway, above=7pt, fill=white, inner sep=1pt]{$R$};
  \draw[decorate, decoration={brace, amplitude=4pt, mirror, raise=0pt}, gray!50!black]
        (-0.55,3.4) -- (-0.55,0) node[midway, left=4pt]{$h$};
  \foreach \pt in {O,Z,T,S,P}{\fill (\pt) circle (0.035);}
  \fill[blue!70!black] (Pl) circle (0.035);
  \fill[gray!55!black] (Pr) circle (0.035);
  \node[below left=0pt] at (O) {$O$};
  \node[above right=-1pt, xshift=4pt] at (Z) {$Z\leftrightarrow|z|$};
  \node[above left=-1pt] at (T) {$T$};
  \node[above right=-1pt] at (S) {$S$};
  \node[above left=0pt, xshift=10pt, yshift=3pt] at (P) {$P$};
  \node[left=2pt] at (Pl) {$Q$};
  \node[right=2pt] at (Pr) {$W$};
  \node[blue!70!black, anchor=west,xshift=-25pt] (FE) at (3.3,0.7)
        {$|PQ|=|f_R(z)|=\dfrac{R|z|}{R+|z|}$};
      \end{tikzpicture}
      \caption{The planar two-line construction of $f_R$ (Proposition~\ref{prop:planar}), drawn in the normalized case $T=(0,h)$, $S=(R,h)$ with $R=4$, $|z|=2$. The diagonals $L_1=\overline{ZT}$ and $L_2=\overline{OS}$ of the trapezoid $OZST$ meet at $P$. The line through $P$ parallel to the bases meets the left leg $\overline{OT}$ at $Q$ and the right leg $\overline{ZS}$ at $W$. The blue left segment has length $|PQ|=R|z|/(R+|z|)=|f_R(z)|$, independent of the height $h$, so it realizes the radial coordinate of $f_R(z)$. The full parallel segment has length $|QW|=2R|z|/(R+|z|)$, the harmonic mean of the base lengths $|z|$ and $R$; since $P$ is its midpoint, $|PQ|=|PW|$ (the dashed half).}
\label{fig:planar}
\end{figure}

\subsection{Inversion, radial translation, inversion}\label{sub:inversion}

Because the lens identity reads $1/|f_R(z)|=1/|z|+1/R$, the map becomes a radial \emph{translation} after passing through inversion. This yields a construction using inversion together with a radial translation:

\begin{proposition}\label{prop:inversion}
Let $\iota:\mathbb{C}\setminus\{0\}\to\mathbb{C}\setminus\{0\}$ denote inversion through the unit circle, $\iota(z)=z/|z|^2$. Let $\tau_R$ denote radial translation by $1/R$:
\[
  \tau_R(w) \;:=\; w + \frac{1}{R}\,\frac{w}{|w|}, \qquad w\ne 0.
\]
Then on $\mathbb{C}\setminus\{0\}$,
\[
  f_R \;=\; \iota \circ \tau_R \circ \iota.
\]
\end{proposition}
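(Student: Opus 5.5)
The plan is to verify the identity pointwise in polar form, exploiting the fact that all three maps are radial. Fix $z=\rho e^{i\theta}$ with $\rho>0$. I will use three facts. First, $\iota$ preserves the direction $e^{i\theta}$ and sends the modulus $\rho$ to $1/\rho$, since $\iota(z)=z/|z|^2=\rho^{-1}e^{i\theta}$. Second, $\tau_R$ also preserves direction: for $w=se^{i\theta}$ with $s>0$ we get $\tau_R(w)=(s+1/R)e^{i\theta}$, so $\tau_R$ adds $1/R$ to the modulus. Third, $f_R$ preserves direction by Proposition~\ref{prop:p10}. Because of this, it suffices to track moduli along the single ray of direction $\theta$.

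Following $z$ through the composition, I expect the moduli to evolve as
\[
  \rho\ \xmapsto{\ \iota\ }\ \frac1\rho\ \xmapsto{\ \tau_R\ }\ \frac1\rho+\frac1R\ \xmapsto{\ \iota\ }\ \Bigl(\frac1\rho+\frac1R\Bigr)^{-1}=\frac{R\rho}{R+\rho}.
\]
Throughout, the direction $e^{i\theta}$ is unchanged. By Proposition~\ref{prop:lens}, or directly from \eqref{eq:mag}, the final modulus is exactly $|f_R(z)|$. Two points on the same ray with the same modulus coincide, so $(\iota\circ\tau_R\circ\iota)(z)=f_R(z)$.

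The only point needing care is that every intermediate point stays in $\mathbb{C}\setminus\{0\}$, so that each map in the composition is defined. This holds automatically. The point $\iota(z)$ has positive modulus $1/\rho$. Next, $\tau_R$ strictly increases the modulus, so it never produces $0$. The final inversion is therefore applied to a nonzero point. I do not expect a genuine obstacle here; the whole content is that inversion converts the reciprocal-addition law of Proposition~\ref{prop:lens} into an additive radial shift. I would close by remarking that the formula extends continuously to $z=0$ with value $0$, matching $f_R(0)=0$, although the statement only claims the identity on $\mathbb{C}\setminus\{0\}$.
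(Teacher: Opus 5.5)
Your proof is correct and follows essentially the same route as the paper. Both track a nonzero point along its ray through the first inversion (modulus $1/\rho$), the radial shift (modulus $1/\rho+1/R$), and the second inversion (modulus $R\rho/(R+\rho)$), with the direction unchanged throughout; your explicit check that every intermediate point stays nonzero is a small addition the paper leaves implicit.
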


\begin{proof}
Let $z\ne 0$. Compute step by step.

\emph{Step 1 (first inversion).} $\iota(z)=z/|z|^2$. This is the point on the same ray from the origin as $z$, at distance $1/|z|$ from the origin: $|\iota(z)|=|z|/|z|^2=1/|z|$, and $\iota(z)/|\iota(z)|=z/|z|$ since the scalar factor $|z|^{-2}$ is positive.

\emph{Step 2 (radial translation).} $\tau_R(\iota(z))=\iota(z)+\tfrac{1}{R}\iota(z)/|\iota(z)|$. Since $\iota(z)/|\iota(z)|=z/|z|$ (same ray), this equals
\[
  \tau_R(\iota(z)) \;=\; \frac{z}{|z|^2}+\frac{1}{R}\cdot\frac{z}{|z|} \;=\; \frac{z}{|z|}\left(\frac{1}{|z|}+\frac{1}{R}\right).
\]
This is a positive scalar multiple of $z/|z|$, hence on the same ray, with magnitude $1/|z|+1/R$.

\emph{Step 3 (second inversion).} Apply $\iota$ to the result of Step~2. The argument is preserved (it is on the same ray) and the magnitude is inverted:
\[
  |\iota(\tau_R(\iota(z)))| \;=\; \frac{1}{1/|z|+1/R} \;=\; \frac{|z|\,R}{R+|z|}.
\]
Combining,
\[
  \iota(\tau_R(\iota(z))) \;=\; \frac{|z|\,R}{R+|z|}\cdot\frac{z}{|z|} \;=\; \frac{R\,z}{R+|z|} \;=\; \frac{z}{1+|z|/R} \;=\; f_R(z),
\]
which is \eqref{eq:fz}.
\end{proof}

The construction is illustrated in Figure~\ref{fig:inversion}. It says: \emph{the cone projection is the conjugate of radial translation by inversion}, i.e., $f_R=\iota\,\tau_R\,\iota^{-1}$, since $\iota=\iota^{-1}$. The middle map $\tau_R$ is not itself an inversion or a complex M\"obius transformation; it is a radial translation in the inverted radius coordinate.

\begin{figure}[htbp]
  \centering
\begin{tikzpicture}[
  >=Latex,
  scale=3.00,
  line cap=round,
  line join=round,
  font=\small,
  every node/.style={inner sep=1.5pt},
  point/.style={circle, fill=black, inner sep=1.15pt},
  lab/.style={fill=white, fill opacity=0.94, text opacity=1, rounded corners=1pt}
]
  \def\zmag{3.0}
  \def\rmag{1.0}
  \pgfmathsetmacro{\zp}{1/\zmag}                    
  \pgfmathsetmacro{\fz}{\rmag*\zmag/(\rmag+\zmag)} 
  \pgfmathsetmacro{\zpp}{\zp + 1/\rmag}             

  \coordinate (O) at (0,0);
  \coordinate (I) at (\zp,0);
  \coordinate (F) at (\fz,0);
  \coordinate (One) at (1,0);
  \coordinate (T) at (\zpp,0);
  \coordinate (Z) at (\zmag,0);

  \draw[thick, gray!35] (O) circle (1);
  \draw[thick, ->] (-0.15,0) -- (3.34,0);
  \draw[thin, gray!50] (1,-0.055) -- (1,0.055);

  \node[point] at (O) {};
  \node[point] at (I) {};
  \node[point] at (F) {};
  \node[point] at (T) {};
  \node[point] at (Z) {};

  \node[lab, below left=5pt] at (O) {$O$};
  \node[lab, below=5pt] at (I) {$\iota(z)$};
  \node[lab, below=6pt] at (F) {$f_R(z)$};
  \node[lab, right = 5pt, above=3pt] at (One) {$1$};
  \draw[thin, gray!65] (T) -- (1.55,-0.36);
  \node[lab, below right=1pt] at (1.55,-0.36) {$\tau_R(\iota(z))$};
  \node[lab, below=6pt] at (Z) {$z$};

  \draw[->, thin]
    (Z) .. controls (2.95,1.82) and (0.40,1.82) .. (I);
  \node[lab, above=2pt] at (1.68,1.36) {first inversion};

  \draw[->, very thick] (\zp,-0.38) -- (\zpp,-0.38);
  \node[lab, below=2pt] at ({(\zp+\zpp)/2},-0.38) {translate by $1/R$};

  \draw[->, thin]
    (T) .. controls (1.28,0.72) and (0.82,0.72) .. (F);
  \node[lab, above=1pt] at (1.05,0.55) {second inversion};
\end{tikzpicture}
\caption{The inversion--translation--inversion construction of $f_R$, depicted on the ray from $O$ through $z$. Numerical illustration: $R=1$, $|z|=3$, so $|\iota(z)|=1/3$, $|\tau_R(\iota(z))|=1/3+1=4/3$, and $|f_R(z)|=3/(1+3)=3/4=1/(4/3)$, consistent with $1/|f_R(z)|=1/|z|+1/R$.}
\label{fig:inversion}
\end{figure}

\subsection{A thin-lens reciprocal-distance analogy}\label{sub:lens}

The lens identity $1/|f_R(z)|=1/|z|+1/R$ has the same reciprocal-distance form as a thin-lens formula \cite{hecht2017} after adopting the sign convention
\[
  \frac{1}{v}=\frac{1}{u}+\frac{1}{f}.
\]
This is not the most common elementary convention, where one often writes $1/f=1/u+1/v$ for positive real object and image distances. Therefore the optical interpretation should be read as a formal ray-diagram analogy rather than as the usual real-image law for a converging lens. Identifying
\[
  u\leftrightarrow |z|, \qquad v\leftrightarrow |f_R(z)|, \qquad f\leftrightarrow R,
\]
the cone projection reproduces this reciprocal-distance law on each ray from the origin.

\begin{proposition}[Formal thin-lens analogy]\label{prop:lens-optics}
Let $z\ne 0$. In the sign convention $1/v=1/u+1/f$, the assignment
\[
  u=|z|, \qquad f=R, \qquad v=|f_R(z)|
\]
satisfies the thin-lens algebraic relation.
\end{proposition}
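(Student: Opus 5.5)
The statement is a direct substitution into Proposition~\ref{prop:lens}, so the plan is short. Fix $z\ne0$ and set $u=|z|$, $f=R$, $v=|f_R(z)|$. All three quantities are strictly positive. This holds because $z\ne0$, $R>0$, and, by \eqref{eq:mag}, $|f_R(z)|=R|z|/(R+|z|)>0$. Hence every reciprocal appearing in the relation $1/v=1/u+1/f$ is defined, and no sign or degeneracy issue arises.

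The single step is to invoke the lens-equation identity of Proposition~\ref{prop:lens}, which gives $1/|f_R(z)|=1/|z|+1/R$. Substituting the assignment turns this word for word into $1/v=1/u+1/f$. As a sanity check, one can derive it directly from \eqref{eq:mag}:
\[
  \frac{1}{v}=\frac{R+|z|}{R|z|}=\frac{1}{|z|}+\frac{1}{R}=\frac{1}{u}+\frac{1}{f}.
\]

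There is no real obstacle here. The only point deserving care is conventional, not mathematical. The relation holds in the stated sign convention $1/v=1/u+1/f$, and not in the common convention $1/f=1/u+1/v$. I would therefore note explicitly that the proposition asserts only this formal algebraic identity, consistent with the preceding discussion. It makes no claim about real-image optics, so no optical hypotheses need to be verified.
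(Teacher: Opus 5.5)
Your proposal is correct and matches the paper's proof, which likewise observes that the relation is exactly the lens identity of Proposition~\ref{prop:lens} under the stated assignment and adds the same caveat that only the formal reciprocal-distance identity is asserted, not the standard real-image law. Your positivity check and direct rederivation from \eqref{eq:mag} are harmless extras.
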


\begin{proof}
The stated relation is exactly the lens identity of Proposition~\ref{prop:lens}. The proposition asserts only this reciprocal-distance equivalence; it does not identify $f_R$ with the usual physical imaging map under the standard positive-distance convention.
\end{proof}

\subsection{The radial M\"obius perspective}\label{sub:mobius}

As used in the raywise cross-ratio discussion, the radial action $\rho\mapsto R\rho/(R+\rho)$ is the M\"obius transformation with matrix
\[
  M_R = \begin{pmatrix} R & 0 \\ 1 & R \end{pmatrix}\in\mathrm{GL}_2(\mathbb{R}), \qquad \det M_R = R^2.
\]
After passing to $\mathrm{PSL}_2(\mathbb{R})$ by normalizing the matrix to have determinant $1$ (which, since $\det M_R=R^2$, corresponds to dividing all matrix entries by $R$),
\[
  \widetilde M_R = \frac{1}{R}\begin{pmatrix} R & 0 \\ 1 & R \end{pmatrix} = \begin{pmatrix} 1 & 0 \\ 1/R & 1 \end{pmatrix}.
\]
This is a standard parabolic element of the lower unipotent subgroup of
\(\mathrm{PSL}_2(\mathbb{R})\) \cite{beardon1983}, the group acting on the upper half-plane \(\mathbb{H}=\{z\in\mathbb{C}:\operatorname{Im}z>0\}\) by M\"obius transformations, with boundary the extended real line \(\mathbb{R}\cup\{\infty\}\). Equipped with the metric \(ds^2=(dx^2+dy^2)/y^2\), \(\mathbb{H}\) is the hyperbolic plane and \(\mathrm{PSL}_2(\mathbb{R})\) is precisely its group of orientation-preserving isometries \cite{beardon1983,anderson2005}; the geodesics are the vertical half-lines and the semicircles meeting \(\mathbb{R}\) orthogonally, and a parabolic element is one fixing a single boundary point, where it acts as a limit of rotations. (This \(ds^2\) is the standard Poincar\'e metric on \(\mathbb{H}\): the Euclidean line element \(dx^2+dy^2\) is rescaled by \(1/y^2\), so a fixed Euclidean displacement counts as more length the nearer it lies to the boundary \(y=0\), placing \(\mathbb{R}\cup\{\infty\}\) at infinite distance and giving \(\mathbb{H}\) constant curvature \(-1\). It is invoked here only to interpret \(\widetilde M_R\): it is the metric whose isometry group is exactly \(\mathrm{PSL}_2(\mathbb{R})\), so it is what gives the bare matrix \(\widetilde M_R\) its geometric meaning: the parabolic classification, the fixed boundary point, and the horocycles of Figures~\ref{fig:horocycle-conjugacy}--\ref{fig:horocycle-invariance}. It is distinct from the flat pullback metric \(g_R\) of Section~\ref{sec:metric}, which is intrinsic to the cone projection.) Its action on the boundary
coordinate \(\rho\in[0,\infty]\) is
\[
  \rho \longmapsto \frac{\rho}{1+\rho/R}
  = \frac{R\rho}{R+\rho},
\]
which is exactly the radial action of \(f_R\). Thus the hyperbolic interpretation concerns the raywise radial coordinate, not \(f_R\) as a complex M\"obius transformation of the plane.
Concretely:
\begin{proposition}\label{prop:mobius}
Pick any ray from the origin and parameterize it by $\rho=|z|\ge 0$. The radial action $\rho\mapsto R\rho/(R+\rho)$ coincides with the restriction to the projective half-line $[0,\infty]$ of the boundary action of the parabolic M\"obius transformation $\widetilde M_R\in\mathrm{PSL}_2(\mathbb{R})$; this restriction maps $[0,\infty]$ into $[0,R]$.
\end{proposition}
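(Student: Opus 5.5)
The plan is a direct verification, since the statement is essentially a computation with the normalized matrix $\widetilde M_R$. First, recall the boundary action of an element $\begin{pmatrix}a&b\\c&d\end{pmatrix}\in\mathrm{PSL}_2(\mathbb{R})$ on $\mathbb{R}\cup\{\infty\}$, namely $x\mapsto(ax+b)/(cx+d)$ with the usual conventions at $\infty$. I will also note that this action is well defined on $\mathrm{PSL}_2$, because the overall sign $\pm$ cancels in the quotient. For $\widetilde M_R$ this gives $x\mapsto x/(x/R+1)=Rx/(R+x)$. Hence on $\rho\in[0,\infty)$ it agrees with the radial action of $f_R$ given by \eqref{eq:mag}, and it equals $M_R\cdot\rho$ from Section~\ref{sec:cross-ratio}, since $\widetilde M_R=M_R/R$ and scalar multiples act identically.

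Second, I will treat the endpoints of the projective half-line separately. At $\rho=0$ the value is $0$. At $\rho=\infty$ the action gives $a/c=1/(1/R)=R$, matching the limit $|f_R(z)|\to R$ as $|z|\to\infty$ recorded earlier. This also shows that the restriction to the closed half-line $[0,\infty]$ is continuous.

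Third, I will establish the range claim. On $[0,\infty)$ the function $\sigma(\rho)=R\rho/(R+\rho)$ satisfies $0\le\sigma(\rho)<R$ and is strictly increasing, because $\sigma'(\rho)=R^2/(R+\rho)^2>0$ (as in Step~4 of Theorem~\ref{thm:group}). Together with $\sigma(\infty)=R$, this shows that $[0,\infty]$ maps into, and in fact onto, $[0,R]$.

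To confirm the parabolic label, I will use $\operatorname{tr}\widetilde M_R=2$ and $\widetilde M_R\ne I$, so the unique boundary fixed point is $0$. Equivalently, $Rx/(R+x)=x$ forces $x^2=0$. No step is a genuine obstacle; the only point needing care is the bookkeeping at $\rho=\infty$ and the sign ambiguity in $\mathrm{PSL}_2$, both of which are handled above.
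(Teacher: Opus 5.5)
Your proposal is correct and follows essentially the same route as the paper: compute the boundary action of $\widetilde M_R$ to get $\rho\mapsto R\rho/(R+\rho)$, then use strict monotonicity together with the endpoint values $0\mapsto 0$ and $\infty\mapsto R$ to conclude that $[0,\infty]$ maps onto $[0,R]$. Your extra checks are correct additions that the paper takes for granted: the sign-independence of the action on $\mathrm{PSL}_2(\mathbb{R})$, and parabolicity via trace $2$ with $0$ as the unique boundary fixed point.
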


\begin{proof}
The matrix $\widetilde M_R=\bigl(\begin{smallmatrix}1&0\\1/R&1\end{smallmatrix}\bigr)$ acts on $\rho$ by $\rho\mapsto(1\cdot\rho+0)/((1/R)\rho+1)=\rho/(1+\rho/R)=R\rho/(R+\rho)$, which equals $|f_R(z)|$ when $\rho=|z|$. This action is strictly increasing in $\rho$ with boundary values $0\mapsto 0$ and $\infty\mapsto R$, so it maps $[0,\infty]$ into (indeed onto) $[0,R]$.
\end{proof}

Equivalently, \(\widetilde M_R\) is conjugate to an ordinary horizontal translation.
Indeed, the map \(S(z)=-1/z\), which is an involution (so \(S^{-1}=S\)), sends the ideal fixed point \(0\) of \(\widetilde M_R\)
to \(\infty\), and one checks that
\[
  S\circ \widetilde M_R\circ S^{-1}(z)=z-\frac{1}{R}.
\]
Thus \(\widetilde M_R\) preserves the horocycles of \(\mathbb H\) based at \(0\) and acts by translations along them \cite{beardon1983,anderson2005}. In the upper half-plane model these horocycles are precisely the Euclidean circles in \(\mathbb H\) tangent to the real axis at \(0\); equivalently, they meet orthogonally the geodesics with ideal endpoint \(0\). This should not be read as
a hyperbolic-isometry statement about the full planar map \(f_R\), which is not
conformal as a map of \(\mathbb C\). Figure~\ref{fig:horocycle-invariance} shows the
horocycle foliation based at \(0\) together with the geodesics it is orthogonal to,
and Figure~\ref{fig:horocycle-conjugacy} displays the conjugacy \(S\circ\widetilde M_R\circ S^{-1}\)
that turns this motion into an ordinary horizontal translation.

To make the word ``preserves'' concrete, it is worth tracing a single orbit.
Working with \(\widetilde M_R\) directly is awkward because it slides each circle
along itself; the conjugacy of Figure~\ref{fig:horocycle-conjugacy} removes that
difficulty. Recall that \(S(z)=-1/z\) carries the fixed point \(0\) to \(\infty\)
and conjugates \(\widetilde M_R\) to the horizontal translation \(z\mapsto z-1/R\).
A horocycle based at \(0\) is a circle tangent to the real axis at the origin,
say the circle of radius \(r\) centred at \((0,r)\); the quantity
\[
  \operatorname{Im}\!\Bigl(-\tfrac1z\Bigr)=\frac{y}{x^2+y^2}
  \qquad (z=x+iy)
\]
is constant on it, equal to \(1/(2r)\), and so labels which horocycle a point lies on.
Because \(S\) sends this quantity to the ordinary height \(\operatorname{Im}\) and
sends \(\widetilde M_R\) to a horizontal translation, the orbit stays at constant
height in the conjugated picture and hence on a fixed horocycle in the original one.
Concretely, take \(R=1\), so \(\widetilde M_R\) acts by \(w=z/(z+1)\), and start at
\(z_0=i\):
\[
  i\;\longmapsto\;\tfrac12+\tfrac12 i\;\longmapsto\;\tfrac25+\tfrac15 i
   \;\longmapsto\;\tfrac{3}{10}+\tfrac{1}{10}i\;\longmapsto\;\tfrac{4}{17}+\tfrac{1}{17}i
   \;\longmapsto\;\cdots
\]
Every iterate satisfies \(y/(x^2+y^2)=1\), that is \(x^2+(y-\tfrac12)^2=\tfrac14\): the
points move but never leave the circle of radius \(\tfrac12\) tangent to the real axis
at \(0\), sliding along it toward the fixed point \(0\)
(Figure~\ref{fig:horocycle-orbit}).

\begin{figure}[htbp]
  \centering
  \begin{tikzpicture}[>=Latex, line join=round, line cap=round, font=\small, scale=4.2]
    \draw[thick] (-0.35,0) -- (1.15,0);
    \node[anchor=west] at (1.12,-0.06) {$\mathbb{R}$};
    \fill (0,0) circle (0.4pt);
    \node[below] at (0,-0.02) {$0$};
    \draw[blue!55!black, thick] (0,0.5) circle (0.5);
    \node[blue!50!black, anchor=west] at (0.55,0.72)
      {horocycle: $x^2+(y-\tfrac12)^2=\tfrac14$};
    \coordinate (z0) at (0,1);
    \coordinate (z1) at (0.5,0.5);
    \coordinate (z2) at (0.4,0.2);
    \coordinate (z3) at (0.3,0.1);
    \coordinate (z4) at (0.235,0.0588);
    \foreach \a/\b in {z0/z1, z1/z2, z2/z3, z3/z4} {
      \draw[orange!85!black, very thick, ->] (\a) to[bend right=18] (\b);
    }
    \foreach \p/\lab/\pos in {%
       z0/$z_0=i$/{above},%
       z1/$z_1$/{above right},%
       z2/$z_2$/{right},%
       z3/$z_3$/{right},%
       z4/$z_4$/{below right}} {
      \fill[orange!60!black] (\p) circle (0.5pt);
      \node[\pos=1.5pt, font=\footnotesize] at (\p) {\lab};
    }
  \end{tikzpicture}
  \caption{A single orbit under the parabolic map, illustrating that $\widetilde M_R$ slides points \emph{along} a horocycle. With $R=1$, the map is $w=z/(z+1)$; the orbit of $z_0=i$ is $i\mapsto\tfrac12+\tfrac12 i\mapsto\tfrac25+\tfrac15 i\mapsto\tfrac{3}{10}+\tfrac{1}{10}i\mapsto\cdots$. Every iterate lies on the same horocycle $x^2+(y-\tfrac12)^2=\tfrac14$ (the circle of radius $\tfrac12$ tangent to the real axis at $0$), and the points move along it toward the fixed point $0$, never leaving it. ``Preserves the horocycle'' refers to the circle as a whole: each point genuinely moves to a new location, but always to another point of the same circle, so the circle is carried onto itself. It does not mean the points are held fixed.}
  \label{fig:horocycle-orbit}
\end{figure}

\begin{figure}[htbp]
  \centering
  \begin{tikzpicture}[>=Latex, line join=round, line cap=round, font=\small]
    \begin{scope}
      \draw[thick] (-2.6,0) -- (2.6,0);
      \node[anchor=west] at (2.4,-0.28) {$\mathbb{R}$};
      \fill (0,0) circle (1.6pt);
      \node[below] at (0,-0.06) {$0$};
      \foreach \r in {0.55,1.05,1.7} {
        \draw[blue!55!black] (0,\r) circle (\r);
      }
      \coordinate (P0) at ($(0,1.05)+(-30:1.05)$);
      \coordinate (P1) at ($(0,1.05)+(-80:1.05)$);
      \draw[orange!85!black, very thick, ->]
        (P0) .. controls ($(0,1.05)+(-50:1.32)$) .. (P1);
      \fill[orange!60!black] (P0) circle (1.7pt);
      \node[anchor=west] at (1.55,1.55)
        {\begin{tabular}{@{}l@{}}$\widetilde M_R$ slides along\\ the horocycle\end{tabular}};
      \node[font=\bfseries] at (0,4.30) {Fixed point at $0$};
      \node[align=center] at (0,3.86) {horocycles $=$ circles\\tangent to the real axis};
    \end{scope}
    \begin{scope}[xshift=6.3cm]
      \node[font=\bfseries] at (0.2,2.45) {$S(z)=-1/z$};
      \node at (0.2,2.05) {$0\mapsto\infty$};
      \draw[thick,->] (-0.85,1.6) -- (1.25,1.6);
    \end{scope}
    \begin{scope}[xshift=10.4cm]
      \draw[thick] (-2.6,0) -- (2.6,0);
      \node[anchor=west] at (2.4,-0.28) {$\mathbb{R}$};
      \foreach \y in {0.8,1.45,2.1} {
        \draw[teal!70!black] (-2.6,\y) -- (2.6,\y);
      }
      \coordinate (Q0) at (1.4,1.45);
      \coordinate (Q1) at (-0.5,1.45);
      \draw[orange!85!black, very thick, ->] (Q0) -- (Q1);
      \fill[orange!60!black] (Q0) circle (1.7pt);
      \node[anchor=south] at (0.45,1.55) {$z\mapsto z-\tfrac1R$};
      \node[font=\bfseries] at (0,4.30) {Fixed point at $\infty$};
      \node at (0,3.92) {horocycles $=$ horizontal lines};
    \end{scope}
  \end{tikzpicture}
  \caption{Conjugacy carrying the parabolic element $\widetilde M_R$ to a horizontal translation. Left: $\widetilde M_R$ fixes the boundary point $0$, and the horocycles based at $0$ are the Euclidean circles in $\mathbb{H}$ tangent to the real axis at the origin; the map slides each point along the horocycle through it. Right: the involution $S(z)=-1/z$ sends the fixed point $0$ to $\infty$, and in these coordinates $\widetilde M_R$ becomes the rigid horizontal translation $z\mapsto z-1/R$, which preserves the horizontal lines $\operatorname{Im}z=\text{const}$, the horocycles based at $\infty$. Conjugation thus turns the curved flow around $0$ into a rigid shift, so $\widetilde M_R$ carries each horocycle onto itself.}
  \label{fig:horocycle-conjugacy}
\end{figure}

\begin{figure}[htbp]
  \centering
  \begin{tikzpicture}[>=Latex, line join=round, line cap=round, font=\small]
    \draw[thick] (-3.4,0) -- (3.4,0);
    \node[anchor=west] at (3.25,-0.30) {$\mathbb{R}$};
    \fill (0,0) circle (1.7pt);
    \node[below] at (0,-0.05) {$0$};
    \draw[gray!75!black, dashed] (0,0) -- (0,4.2);
    \draw[gray!75!black, dashed] (0,0) arc (180:0:0.9);
    \draw[gray!75!black, dashed] (0,0) arc (180:0:1.7);
    \node[gray!75!black, anchor=west] at (1.8,0.5) {geodesics ending at $0$};
    \foreach \r in {0.7,1.3,2.1} {
      \draw[blue!55!black, thick] (0,\r) circle (\r);
    }
    \node[blue!50!black, anchor=east] at (-2.25,2.6)
      {\begin{tabular}{@{}r@{}}horocycles\\ based at $0$\end{tabular}};
    \coordinate (A0) at ($(0,2.1)+(-20:2.1)$);
    \coordinate (A1) at ($(0,2.1)+(-55:2.1)$);
    \draw[orange!85!black, very thick, ->]
      (A0) .. controls ($(0,2.1)+(-37:2.45)$) .. (A1);
    \fill[orange!60!black] (A0) circle (1.8pt);
    \coordinate (B0) at ($(0,1.3)+(-25:1.3)$);
    \coordinate (B1) at ($(0,1.3)+(-68:1.3)$);
    \draw[orange!85!black, very thick, ->]
      (B0) .. controls ($(0,1.3)+(-46:1.55)$) .. (B1);
    \fill[orange!60!black] (B0) circle (1.8pt);
    \coordinate (C0) at ($(0,0.7)+(-30:0.7)$);
    \coordinate (C1) at ($(0,0.7)+(-85:0.7)$);
    \draw[orange!85!black, very thick, ->]
      (C0) .. controls ($(0,0.7)+(-57:0.86)$) .. (C1);
    \fill[orange!60!black] (C0) circle (1.8pt);
    \node[orange!75!black, anchor=west] at (2.35,3.05)
      {\begin{tabular}{@{}l@{}}$\widetilde M_R$: motion\\ along each\\ horocycle\end{tabular}};
  \end{tikzpicture}
  \caption{The parabolic element $\widetilde M_R$ preserves the horocycles based at $0$. Each circle is tangent to the real axis at the fixed point $0$ and meets orthogonally the geodesics with ideal endpoint $0$ (shown dashed: the vertical half-line and the semicircles meeting the real axis at $0$). The map $\widetilde M_R$ carries each horocycle onto itself, sliding points along it toward the fixed point.}
  \label{fig:horocycle-invariance}
\end{figure}

\FloatBarrier
\section{Higher-dimensional generalization}\label{sec:higher-d}

The construction goes through in arbitrary dimension $n\ge 1$ with the evident replacements. For $R,h>0$, consider the cone in $\mathbb{R}^{n+1}$ with apex at the origin, axis along $e_{n+1}$, height $h$, and base the $n$-ball of radius $R$ in the hyperplane $x_{n+1}=h$, whose rim is the $(n-1)$-sphere of radius $R$ (for $n=1$ this base is the interval $[-R,R]$ with rim $S^0=\{\pm R\}$, and the cone is a pair of segments); identify $\mathbb{R}^n$ with $\{x_{n+1}=0\}$. Set $f_R(0):=0$. For $\mathbf{x}\in\mathbb{R}^n\setminus\{0\}$, draw the line from $\mathbf{x}$ to $(0,h):=(0,\dots,0,h)\in\mathbb{R}^{n+1}$, let $P$ be its intersection with the lateral surface of the cone, and let $f_R(\mathbf{x})$ be the orthogonal projection of $P$ onto $\mathbb{R}^n$.

\begin{proposition}\label{prop:higher-d}
With this construction,
\[
  f_R(\mathbf{x}) \;=\; \frac{\mathbf{x}}{1+|\mathbf{x}|/R}, \qquad |f_R(\mathbf{x})| \;=\; \frac{R\,|\mathbf{x}|}{R+|\mathbf{x}|},
\]
independently of $h$. The map $f_R$ is a homeomorphism $\mathbb{R}^n\to B^n_R:=\{y\in\mathbb{R}^n:|y|<R\}$ and a $C^1$-diffeomorphism with $C^1$ inverse; it is smooth away from the origin but not $C^2$ at the origin. The statements corresponding to Propositions~\ref{prop:lens}, \ref{prop:cross-ratio}, Theorems~\ref{thm:semigroup} and~\ref{thm:group}, Corollary~\ref{cor:iteration} and Proposition~\ref{prop:gen} hold with the same domain qualifications as in dimension two, with rays replacing complex arguments, $\mathrm{O}(n)$-equivariance (that is, $f_R(Q\mathbf{x})=Q\,f_R(\mathbf{x})$ for every orthogonal matrix $Q\in\mathrm{O}(n)$ and every $\mathbf{x}\in\mathbb{R}^n$; see Step~3 of the proof) replacing rotation equivariance, and vector-valued ODEs replacing complex-valued ones.
\end{proposition}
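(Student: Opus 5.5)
The plan is to reduce everything to the two-dimensional arguments already given, by working in a single planar cross-section for the formula and by passing to real coordinates for the regularity.

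\emph{Step 1: the formula.} Fix $\mathbf{x}\neq0$ and let $\Pi$ be the $2$-plane spanned by $\mathbf{x}$ and $e_{n+1}$. It contains $O$, $\mathbf{x}$, $(0,h)$ and the whole construction line. The lateral surface of the cone meets $\Pi$ in the two edge segments from $O$ to $\pm R\,\mathbf{x}/|\mathbf{x}|+h e_{n+1}$. So the intersection point $P$, and its foot, are produced exactly by the planar side-view configuration of Proposition~\ref{prop:p10}, with $|z|$ replaced by $|\mathbf{x}|$. The same two similar-triangle ratios give $|f_R(\mathbf{x})|=R|\mathbf{x}|/(R+|\mathbf{x}|)$, independent of $h$. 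Because $P$ lies in the half-plane of $\Pi$ containing $\mathbf{x}$, the foot lies on the ray through $\mathbf{x}$, which yields the vector formula.

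\emph{Step 2: bijectivity and inverse.} The radial profile $\sigma(r)=Rr/(R+r)$ is a strictly increasing bijection $[0,\infty)\to[0,R)$. Hence $f_R$ is a bijection $\mathbb{R}^n\to B^n_R$, and its inverse is $\mathbf{y}\mapsto\mathbf{y}/(1-|\mathbf{y}|/R)$. Both maps are visibly continuous, so $f_R$ is a homeomorphism.

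\emph{Step 3: $\mathrm{O}(n)$-equivariance.} For $Q\in\mathrm{O}(n)$ we have $|Q\mathbf{x}|=|\mathbf{x}|$, so $f_R(Q\mathbf{x})=Q\mathbf{x}/(1+|\mathbf{x}|/R)=Qf_R(\mathbf{x})$ directly. This is the fact that replaces rotation equivariance wherever the two-dimensional proofs used it.

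\emph{Step 4: regularity.} Copy the two-dimensional Jacobian computation verbatim, with $I_n$ in place of $I_2$:
\[
  Df_R(\mathbf{x})=\frac{R}{R+|\mathbf{x}|}I_n-\frac{R}{(R+|\mathbf{x}|)^2}\,|\mathbf{x}|\,(\mathbf{u}\otimes\mathbf{u}),\qquad \mathbf{u}=\mathbf{x}/|\mathbf{x}|.
\]
This is smooth away from $0$ and tends to $I_n$ at $0$, and the mean value argument then gives $C^1$ with $Df_R(0)=I_n$. For invertibility off the origin, $Df_R(\mathbf{x})$ has eigenvalue $R^2/(R+\rho)^2$ in the radial direction and eigenvalue $R/(R+\rho)$ with multiplicity $n-1$ on the orthogonal complement, where $\rho=|\mathbf{x}|$; both are positive. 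At the origin the derivative is $I_n$. The inverse has the same radial form with $-1/R$ in place of $1/R$, so the identical analysis on $B^n_R$ shows it is $C^1$, and the inverse function theorem applies away from $0$. The failure of $C^2$ comes from restricting to a line $t\mapsto t\mathbf{u}$: the scalar function $t/(1+|t|/R)$ has one-sided second derivatives $\mp 2/R$ at $t=0$.

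\emph{Step 5: the transferred statements.} Each listed result was proved using only three ingredients:
\begin{enumerate}[label=(\alph*)]
  \item ray preservation,
  \item the reciprocal identity $1/|f(\mathbf{x})|=1/|\mathbf{x}|+\kappa$,
  \item computations in the single radial coordinate.
\end{enumerate}
All three hold verbatim in $\mathbb{R}^n$. So the arguments for Propositions~\ref{prop:lens} and \ref{prop:cross-ratio}, Theorems~\ref{thm:semigroup} and \ref{thm:group}, and Corollary~\ref{cor:iteration} transcribe with $|\mathbf{x}|$ in place of $|z|$ and the unit vector $\mathbf{x}/|\mathbf{x}|$ in place of $e^{i\theta}$. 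Proposition~\ref{prop:gen} becomes the vector ODE $\dot{\mathbf{x}}=-|\mathbf{x}|\mathbf{x}/R$, with the same regularity proof for $V$.

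The main obstacle is the case $n=1$, where the lateral surface is just two segments and "rays" means the two half-lines. I expect this case goes through by direct inspection, since Step 1 already uses only a planar section.
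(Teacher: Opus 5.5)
Your proposal is correct and follows the paper's proof essentially step for step. The shared steps are the reduction to the plane spanned by $\mathbf{x}$ and $e_{n+1}$, the half-plane argument placing $f_R(\mathbf{x})$ on the ray through $\mathbf{x}$, the direct check of $\mathrm{O}(n)$-equivariance, and the dimension-free Jacobian computation. Your eigenvalue check ($R^2/(R+\rho)^2$ radially and $R/(R+\rho)$ tangentially) makes the invertibility of $Df_R$ off the origin slightly more explicit than the paper does.

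The case $n=1$ is not actually an obstacle. There $\Pi$ is all of $\mathbb{R}^2$, the cone is exactly the two edge segments, and the planar argument applies verbatim.
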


\begin{proof}
\emph{Step 1 (reduction to a $2$-plane).} The case $\mathbf{x}=0$ is trivial as before. For $\mathbf{x}\ne 0$, let $\Pi$ denote the $2$-plane in $\mathbb{R}^{n+1}$ spanned by $\mathbf{x}$ and the vector $e_{n+1}$. Then $\Pi$ contains $0$, $\mathbf{x}$, $(0,h)$, the construction line $L_{\mathbf{x}}$, and the cone axis. The intersection of $\Pi$ with the \emph{lateral surface} of $\mathcal{K}_{R,h}$ consists of two line segments emanating from the origin (the two cone edges in $\Pi$), exactly as in the $2$-dimensional case. Set up coordinates on $\Pi$ so that the origin is $(0,0)$, the cone axis is the positive vertical axis, and $\mathbf{x}$ has coordinates $(|\mathbf{x}|,0)$. The argument now reduces to the planar problem already solved in the proof of Proposition~\ref{prop:p10}, with $|\mathbf{x}|$ in place of $|z|$, yielding $|f_R(\mathbf{x})|=R|\mathbf{x}|/(R+|\mathbf{x}|)$.

\emph{Step 2 (recovering the vector $f_R(\mathbf{x})$).} The orthogonal projection $f_R(\mathbf{x})\in\mathbb{R}^n$ lies on the same ray from the origin as $\mathbf{x}$ inside $\mathbb{R}^n$ (for the same reason as before: $P$ lies in the half-plane of $\Pi$ on the same side as $\mathbf{x}$, and dropping the perpendicular preserves this side). Hence $f_R(\mathbf{x})=|f_R(\mathbf{x})|\cdot \mathbf{x}/|\mathbf{x}|=R\,\mathbf{x}/(R+|\mathbf{x}|)=\mathbf{x}/(1+|\mathbf{x}|/R)$.

\emph{Step 3 (transferring the structural results).} Each of the corresponding results uses only the formula $f_R(z)=z/(1+|z|/R)$, the magnitude formula \eqref{eq:mag}, and equivariance under rotations. The formula has the same shape in any dimension; complex arguments are replaced by rays from the origin, rotation equivariance becomes $\mathrm{O}(n)$-equivariance, and the ODE is read as a vector-valued equation $\dot{\mathbf{x}}=-|\mathbf{x}|\mathbf{x}/R$. Here $\mathrm{O}(n)$-equivariance means that $f_R$ commutes with the orthogonal group $\mathrm{O}(n)=\{Q\in\mathbb{R}^{n\times n}:Q^\top Q=I_n\}$ of all rotations and reflections fixing the origin:
\[
  f_R(Q\mathbf{x})=Q\,f_R(\mathbf{x})\qquad\text{for every }Q\in\mathrm{O}(n)\text{ and }\mathbf{x}\in\mathbb{R}^n.
\]
This is immediate from the radial form $f_R(\mathbf{x})=\tfrac{R}{R+|\mathbf{x}|}\mathbf{x}$: since every $Q\in\mathrm{O}(n)$ preserves the Euclidean norm, $|Q\mathbf{x}|=|\mathbf{x}|$, and is linear, one has $f_R(Q\mathbf{x})=\tfrac{R}{R+|Q\mathbf{x}|}(Q\mathbf{x})=Q\bigl(\tfrac{R}{R+|\mathbf{x}|}\mathbf{x}\bigr)=Q\,f_R(\mathbf{x})$ for $\mathbf{x}\ne 0$, and trivially at $\mathbf{x}=0$. It is the $n$-dimensional counterpart of the rotation equivariance used throughout the planar argument (where the relevant group is $\mathrm{O}(2)$), and it lets each higher-dimensional proof be reduced by an orthogonal change of coordinates to the planar case, exactly as in Step~1. With those replacements, the same proofs go through. (Cross-ratio preservation, in particular, makes sense on any ray from the origin in $\mathbb{R}^n$.) The regularity statement transfers as well: the Jacobian computation of Section~\ref{sec:setup}, including the bound on the $\mathbf{x}\otimes\mathbf{x}/|\mathbf{x}|$ term, is dimension-free, so $f_R$ is globally $C^1$ with $Df_R(0)=I_n$, smooth away from the origin, and not $C^2$ there.
\end{proof}

The map $\mathbf{x}\mapsto \mathbf{x}/(1+|\mathbf{x}|/R)$ is a standard radial homeomorphism from $\mathbb{R}^n$ onto the open ball $B^n_R$. Adjoining one limiting endpoint for each ray gives the closed-ball radial compactification; further collapsing that boundary sphere to a point yields the usual one-point compactification $S^n$ \cite{munkres2000}. Proposition~\ref{prop:higher-d} says that this radial homeomorphism is realized by an elementary single-cone construction in any dimension. Thus the target is the open interval when $n=1$, the open disk when $n=2$, and the open ball in higher dimensions.

\begin{remark}[The Self-Directrix Theorem in higher dimensions]
For $n\ge 2$, the Euclidean theorems of Section~\ref{sec:geometry} extend with the same proof. (For $n=1$ the construction degenerates to the one-dimensional radial interval map, with no conic content.) Let $H=\{x_1=d\}\subset\mathbb{R}^n$ be a hyperplane at distance $d>0$ from $O$. For $\mathbf{x}\in H$ and $w=f_R(\mathbf{x})$, the inverse relation $\mathbf{x}=w/(1-|w|/R)$ gives $w_1=d(1-|w|/R)$, hence $\operatorname{dist}(w,H)=d-w_1=(d/R)\,|w|$ and
\[
  |Ow|=\frac{R}{d}\,\operatorname{dist}(w,H).
\]
Thus $f_R$ carries $H$ onto a patch of the quadric of revolution with focus $O$, directrix hyperplane $H$, eccentricity $R/d$, and semi-latus rectum $R$ in every meridian section: the rotational analogue of the Self-Directrix conic, obtained by revolving that planar conic about the axis through $O$ perpendicular to $H$, with the same trichotomy in $d>R$, $d=R$, $d<R$. The corresponding arc/patch statement in the Confocal--Codirectrix Theorem extends in the same way. Let $S=S(\lambda,e,\mathbf{a})\subset\mathbb{R}^n$, with $\mathbf{a}\in S^{n-1}$ a unit axis direction, be the focal quadric of revolution obtained by rotating the planar focal locus $\mathcal{B}(\lambda,e,\alpha)$ of Section~\ref{sec:confocal} (its planar axis aligned with $\mathbf{a}$) about the axis through $O$ in direction $\mathbf{a}$; this is a conic when $n=2$, a surface when $n=3$, and a hypersurface for $n>3$. Concretely, in spherical coordinates $\mathbf{x}=\rho\,\mathbf{u}$ with $\mathbf{u}$ a unit vector, writing $\psi$ for the angle between $\mathbf{u}$ and $\mathbf{a}$ (so $\cos\psi=\mathbf{u}\cdot\mathbf{a}$),
\[
  S=\Bigl\{\rho\,\mathbf{u}:\ \tfrac{1}{\rho}=\tfrac{1}{\lambda}+\tfrac{e}{\lambda}\cos\psi,\ \tfrac{1}{\rho}>0\Bigr\},
\]
the focal radius depending on $\mathbf{u}$ only through $\psi$. Because $f_R$ is $\mathrm{O}(n)$-equivariant and radial, it fixes $\mathbf{u}$ (hence $\psi$) and acts by the reciprocal-modulus shift $1/\rho\mapsto1/\rho+1/R$ (Proposition~\ref{prop:lens}), which alters only the constant term:
\[
  \frac{1}{\rho'}=\Bigl(\frac{1}{\lambda}+\frac{1}{R}\Bigr)+\frac{e}{\lambda}\cos\psi
  =\frac{1}{\lambda'}+\frac{e'}{\lambda'}\cos\psi,
  \qquad \lambda'=\frac{R\lambda}{R+\lambda},\quad e'=\frac{Re}{R+\lambda},
\]
exactly as in the planar Theorem~\ref{thm:confocal}. Thus $f_R(S)$ is a patch of the focal quadric of revolution $S(\lambda',e',\mathbf{a})$ with the same focus $O$, the same axis $\mathbf{a}$, and the same directrix hyperplane $H_{\mathbf{a},\delta}=\{x\in\mathbb{R}^n:x\cdot\mathbf{a}=\delta\}$ (since $\delta'=\lambda'/e'=\lambda/e=\delta$), and with strictly smaller eccentricity $e'<e$; the meridian sections are precisely the planar arcs of Theorem~\ref{thm:confocal}, and the three cases transfer verbatim. The argument is dimension-independent because it reduces, meridian half-plane by meridian half-plane, to the planar case.
\end{remark}

\section{The pullback metric and the flat-disk completion}\label{sec:metric}

Let $g_E$ denote the standard Euclidean metric on $\mathbb{R}^n$. Since $f_R$ is not smooth at the origin, the pullback formula should not be read as producing a smooth Riemannian metric on all of $\mathbb{R}^n$. It is cleanest to define the global distance directly, by declaring $f_R$ to be an isometry onto the open Euclidean ball $B^n_R$:
\[
  d_R(p,q):=\bigl|f_R(p)-f_R(q)\bigr|.
\]
As defined, $d_R$ is a pullback \emph{chordal} distance. It coincides with the intrinsic length distance it induces: since $B^n_R$ is convex, the straight segment between $f_R(p)$ and $f_R(q)$ stays in $B^n_R$ and already realizes the infimum of path lengths (Step~2 of the proof below makes this precise), so calling $d_R$ a length metric is justified. On $\mathbb{R}^n\setminus\{0\}$, where $f_R$ is smooth, this length metric is represented by a smooth Riemannian tensor, the pullback $f_R^*(g_E|_{B^n_R})$, which is flat in every dimension because it is the pullback of a flat metric by a diffeomorphism. In dimension two, the coordinate formula is
\[
  g_R \;:=\; f_R^*\bigl(g_E|_{B^2_R}\bigr).
\]

\begin{proposition}[Induced length metric, $n=2$]\label{prop:metric}
On $\mathbb{R}^2\setminus\{0\}$, in polar coordinates $(\rho,\theta)$,
\[
  g_R \;=\; \frac{R^4}{(R+\rho)^4}\,d\rho^2 \;+\; \frac{R^2\rho^2}{(R+\rho)^2}\,d\theta^2.
\]
The metric is flat on $\mathbb{R}^2\setminus\{0\}$ and, as a length metric, is globally isometric through $f_R$ to the open Euclidean disk $B^2_R$. The $g_R$-distance from $0$ to a point of Euclidean radius $\rho$ is $R\rho/(R+\rho)$; the $g_R$-diameter of $\mathbb{R}^2$, in the supremal sense, is $2R$ and is not attained by any pair of points (it is attained only in the completion); the total $g_R$-area of $\mathbb{R}^2$ is $\pi R^2$. The metric space $(\mathbb{R}^2,d_R)$ is incomplete, and its metric completion is isometric to the closed Euclidean disk $\overline{B^2_R}$ (with the induced Euclidean length metric).
\end{proposition}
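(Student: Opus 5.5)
The plan is to compute the pullback directly from the polar form of $f_R$ and then transfer every global metric statement through the isometry $f_R:(\mathbb{R}^2,d_R)\to(B^2_R,|\cdot|)$.

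\emph{Step 1 (the metric tensor).} Write $f_R(\rho e^{i\theta})=\sigma(\rho)e^{i\theta}$ with $\sigma(\rho)=R\rho/(R+\rho)$. The Euclidean metric on the disk in polar coordinates $(s,\theta)$ is $ds^2+s^2d\theta^2$. Pulling back via $s=\sigma(\rho)$ gives $\sigma'(\rho)^2d\rho^2+\sigma(\rho)^2d\theta^2$. Since $\sigma'(\rho)=R^2/(R+\rho)^2$, this is exactly the displayed $g_R$. These factors are the radial and tangential stretch factors already noted in Section~\ref{sec:directrix}. Flatness on $\mathbb{R}^2\setminus\{0\}$ is then automatic, because $g_R$ is the pullback of the flat metric under the smooth diffeomorphism $f_R|_{\mathbb{R}^2\setminus\{0\}}$ onto $B^2_R\setminus\{0\}$. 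I will not compute the Gaussian curvature, although one could check it via the formula $K=-\sigma''/(\sigma'\cdot(\text{stuff}))$ as a sanity check.

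\emph{Step 2 (length metric equals chordal distance).} For any piecewise $C^1$ path $\gamma$ in $\mathbb{R}^2$, its $g_R$-length equals the Euclidean length of $f_R\circ\gamma$. This holds on $\mathbb{R}^2\setminus\{0\}$ by Step~1 and extends through $0$ because $f_R$ is $C^1$ with $Df_R(0)=I_2$. Hence the induced length distance is the infimum of Euclidean lengths of paths in $B^2_R$ joining $f_R(p)$ and $f_R(q)$. By convexity of $B^2_R$, this infimum is attained by the straight segment. Its preimage under the homeomorphism $f_R$ is an admissible path, so the length distance equals $d_R(p,q)=|f_R(p)-f_R(q)|$. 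I expect this step to be the main point requiring care, namely the passage through the non-smooth origin and the surjectivity of $f_R$ onto $B^2_R$. The $C^1$ regularity from Section~\ref{sec:setup} handles the first, and the homeomorphism $\mathbb{C}\to D_R$ handles the second.

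\emph{Step 3 (global consequences).} First, $d_R(0,\rho e^{i\theta})=|f_R(\rho e^{i\theta})|=R\rho/(R+\rho)$ by \eqref{eq:mag}. Next, the diameter of $(\mathbb{R}^2,d_R)$ equals that of the open disk $B^2_R$. This is $\sup|w-w'|=2R$, and it is not attained, since $|w-w'|\le|w|+|w'|<2R$. The area is computed by integrating $\sqrt{\det g_R}=R^3\rho/(R+\rho)^3$ over $\rho\in(0,\infty)$, $\theta\in[0,2\pi)$, or more simply by change of variables it equals the Lebesgue area $\pi R^2$ of $B^2_R$; the origin has measure zero. Finally, isometric metric spaces have isometric completions, and the completion of the open disk with the Euclidean distance is the closed disk $\overline{B^2_R}$. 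Incompleteness follows because a radial sequence $\rho_k\to\infty$ is $d_R$-Cauchy, its images converging to a boundary point, yet it has no limit in $\mathbb{R}^2$.
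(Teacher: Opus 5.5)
Your proposal is correct and follows essentially the same route as the paper. Both pull back $ds^2+s^2d\theta^2$ via $s=R\rho/(R+\rho)$, get flatness from the pullback of a flat metric, identify the length distance with the chordal distance by convexity of $B^2_R$, and read off the distance, diameter, area, and completion through the isometry onto the open disk. Your extra details (the explicit area integral of $R^3\rho/(R+\rho)^3$, the bound $|w-w'|\le|w|+|w'|<2R$ for non-attainment, and the treatment of paths through the origin via $C^1$ regularity) make explicit what the paper leaves implicit. One small correction there: the pulled-back segment is an admissible path because $f_R^{-1}$ is $C^1$, not merely because $f_R$ is a homeomorphism.
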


\begin{proof}
\emph{Step 1 (computing the metric).} The Euclidean metric on the target $\mathbb{R}^2$, written in polar coordinates $(s,\theta')$ with $s$ the radial coordinate, is the standard expression
\[
  g_E \;=\; ds^2 + s^2\,d\theta'^2.
\]
The map $f_R$, in polar coordinates, sends $(\rho,\theta)\mapsto(s,\theta')$ where $\theta'=\theta$ (rotation equivariance) and $s=R\rho/(R+\rho)$ (magnitude formula). Differentiating $s$ with respect to $\rho$ by the quotient rule:
\[
  \frac{ds}{d\rho} \;=\; \frac{R(R+\rho) - R\rho\cdot 1}{(R+\rho)^2} \;=\; \frac{R^2}{(R+\rho)^2}.
\]
Hence
\[
  ds \;=\; \frac{R^2}{(R+\rho)^2}\,d\rho \quad\text{and}\quad d\theta' = d\theta.
\]
The pullback $g_R=f_R^*g_E$ is computed by substituting these:
\[
  g_R \;=\; \left(\frac{R^2}{(R+\rho)^2}\right)^{\!2}\!d\rho^2 \;+\; \left(\frac{R\rho}{R+\rho}\right)^{\!2}\!d\theta^2
  \;=\; \frac{R^4}{(R+\rho)^4}\,d\rho^2 \;+\; \frac{R^2\rho^2}{(R+\rho)^2}\,d\theta^2,
\]
as claimed.

\emph{Step 2 (flatness).} On $\mathbb{R}^2\setminus\{0\}$ the map $f_R$ is smooth, so the usual pullback argument applies there: the pullback of the flat Euclidean metric is flat. Globally, the length metric defined by this formula is isometric through $f_R$ to the open Euclidean disk $(B_R^2,g_E|_{B_R^2})$. Here the chordal definition $d_R(p,q)=|f_R(p)-f_R(q)|$ coincides with the induced length (geodesic) metric \cite{burago2001}: because $B_R^2$ is convex, the straight segment between any two of its points stays inside, so the Euclidean chord between $f_R(p)$ and $f_R(q)$ is already the shortest path, and pulling it back by $f_R$ realizes that infimum. This is the sense in which the length space $(\mathbb{R}^2,d_R)$ is isometric to the flat open Euclidean disk, even though the coordinate coefficients of $g_R$ are not those of a smooth Riemannian metric at the origin. (In Cartesian coordinates the tensor $g_R=Df_R^{\!\top}Df_R$ does extend \emph{continuously} to the origin, with value the identity, because $Df_R$ is continuous and $Df_R(0)=I$; but the extension is not smooth there, consistent with $f_R$ being $C^1$ and not $C^2$.)

\emph{Step 3 (distance, area, and completion).} The remaining claims follow from the isometry with the open Euclidean disk. The $g_R$-distance from $0$ to a point of Euclidean radius $\rho$ equals the Euclidean radius of its image, namely $R\rho/(R+\rho)$.
The diameter is the supremal Euclidean diameter of $B_R^2$, namely $2R$; it is not attained because the antipodal points at distance $2R$ lie only on the boundary of the completion. The total area is the Euclidean area of $B_R^2$, namely $\pi R^2$, and the metric completion is isometric to the closed Euclidean disk $\overline{B_R^2}$.

For explicit incompleteness, take any sequence $\rho_n\to\infty$ on a fixed ray, for instance $\rho_n=n$. Its image under $f_R$ converges to the corresponding boundary point of $\overline{B_R^2}$, so the sequence is $g_R$-Cauchy but has no limit in $\mathbb{R}^2$. Equivalently, the radial segment joining the points of radii $\rho_m$ and $\rho_n$ has $g_R$-length
\[
  \left|\frac{R\rho_m}{R+\rho_m}-\frac{R\rho_n}{R+\rho_n}\right|\to0.
\]
The completion adds one boundary point for each ray. The limiting coordinate circles have lengths
\[
  \lim_{\rho\to\infty}\sqrt{(g_R)_{\theta\theta}}\cdot 2\pi
  =\lim_{\rho\to\infty}\frac{R\rho}{R+\rho}\cdot 2\pi=2\pi R.
\]
The metric induced on the added boundary as a subset of the completed disk is the Euclidean chord metric; the number $2\pi R$ is the limiting intrinsic length of the coordinate circles, equivalently the Euclidean circumference of the boundary circle.
\end{proof}

The induced length metric $g_R$ realizes the entire complex plane as a flat disk of finite Euclidean radius $R$ (cf.\ Figure~\ref{fig:global}). This complements the spherical ``compactification'' of $\mathbb{C}$ via stereographic projection \cite{needham1997}: there the boundary at infinity is a single point, and the induced metric is round; here the boundary at infinity is a circle, the limiting coordinate circles have intrinsic length $2\pi R$, and the induced metric is flat.

\section{A characterization theorem}\label{sec:characterization}

The properties identified above suggest that $f_R$ is pinned down by a few axioms, and it admits two complementary characterizations. Theorem~\ref{thm:sd-char} characterizes $f_R$ \emph{geometrically}, assuming no regularity, from the self-directrix property: among radial maps fixing $O$, it is the only one carrying every line to a focus--directrix conic with that line as its directrix. The theorem below characterizes $f_R$ \emph{algebraically and topologically}, from ray preservation, a M\"obius radial action, bounded image, and first-order normalization. The formulation makes explicit the normalizations that are actually needed: the map must preserve each ray, not merely commute with rotations, and it must be the identity to first order at the origin.

\begin{theorem}[Normalized characterization]\label{thm:char}
Let $\Phi:\mathbb{C}\to\mathbb{C}$ be a continuous map satisfying:
\begin{enumerate}[label=\textup{(\roman*)}]
  \item $\Phi(0)=0$;
  \item $\Phi$ preserves each ray from the origin: for every $\rho\ge 0$ and $\theta\in\mathbb{R}$ there is a number $\sigma(\rho)\ge 0$ such that
  \[
    \Phi(\rho e^{i\theta})=\sigma(\rho)e^{i\theta};
  \]
  equivalently, $\Phi$ is rotation equivariant and maps the positive real ray $[0,\infty)$ into itself;
  \item the radial action $\rho\mapsto\sigma(\rho)$ is the restriction to $[0,\infty)$ of a strictly increasing \emph{real} M\"obius transformation (a real projective transformation of $\mathbb{RP}^1$) defined on all of $[0,\infty)$ and fixing $0$;
  \item $|\Phi(z)|\to R$ as $|z|\to\infty$ for some $R\in(0,\infty)$; equivalently, the radial M\"obius action of (iii) has a finite positive limit at infinity, which is then the supremum of $|\Phi|$ by strict monotonicity;
  \item $\Phi$ is tangent to the identity at the origin, in the radial sense
  \[
    \lim_{\rho\to 0^+}\frac{\sigma(\rho)}{\rho}=1.
  \]
\end{enumerate}
Then $\Phi=f_R$.
\end{theorem}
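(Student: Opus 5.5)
By (ii) the whole statement reduces to the scalar function $\sigma$, so the plan is to determine $\sigma$ and then transport the answer back to $\Phi$ along rays. By (iii), write
\[
  \sigma(\rho)=\frac{a\rho+b}{c\rho+d},\qquad ad-bc\neq0.
\]
The condition $\sigma(0)=0$ from (i) forces $b=0$, hence $a\neq0$ and $d\neq0$. Dividing numerator and denominator by $d$, we may take $d=1$, so $\sigma(\rho)=a\rho/(1+c\rho)$. The requirement in (iii) that $\sigma$ be defined on all of $[0,\infty)$ means the pole $-1/c$ must not lie in $[0,\infty)$. Hence $1+c\rho\neq0$ for $\rho\ge0$, and since $1+c\rho$ equals $1$ at $\rho=0$ and is continuous, in fact $1+c\rho>0$ there. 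This forces $c\ge0$.

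Next use the first-order normalization (v): $\sigma(\rho)/\rho=a/(1+c\rho)\to a$ as $\rho\to0^+$, so $a=1$. Strict monotonicity then holds automatically, because $\sigma'(\rho)=1/(1+c\rho)^2>0$.

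Now bring in the boundedness condition (iv). If $c=0$, then $\sigma(\rho)=\rho$, which is unbounded and contradicts $|\Phi(z)|\to R<\infty$. So $c>0$, and
\[
  \sigma(\rho)=\frac{\rho}{1+c\rho}\longrightarrow\frac{1}{c}\qquad(\rho\to\infty).
\]
Matching this limit with (iv) gives $1/c=R$, so $\sigma(\rho)=\rho/(1+\rho/R)=R\rho/(R+\rho)$. This is the radial action of $f_R$ from \eqref{eq:mag}. By (ii) we then get $\Phi(\rho e^{i\theta})=\sigma(\rho)e^{i\theta}=f_R(\rho e^{i\theta})$ for every $\rho,\theta$, which together with (i) gives $\Phi=f_R$. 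Continuity of $\Phi$ is never actually used beyond what (iii) already supplies. Equivalently, the argument says that $\sigma=\phi_{1/R}$ in the notation of Definition~\ref{def:phi}.

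Every step is elementary, so the only real care needed is in the pole analysis: \emph{defined on all of $[0,\infty)$} has to be converted into positivity of the denominator there, which is what rules out $c<0$. A negative $c$ would put a pole at $\rho=1/|c|>0$, which is excluded; sign issues would also be excluded independently by strict monotonicity and nonnegativity of $\sigma$. I would also remark that the normalization $d=1$ is legitimate only because $d\neq0$, and that this in turn follows from $b=0$ together with $ad-bc\neq0$.
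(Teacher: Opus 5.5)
Your proof is correct and follows essentially the same route as the paper's: you write $\sigma$ as a real M\"obius map, use $\sigma(0)=0$ to get $b=0$ (hence $d\neq0$), use definedness on $[0,\infty)$ to exclude a pole there, and then let (iv) and (v) fix the two remaining parameters. The only difference is the order in which the hypotheses are used: you apply (v) before (iv), which gives $a=1$ at once and shows that the strict-monotonicity clause of (iii) is redundant, whereas the paper uses monotonicity to get $a>0$ before invoking (iv) and (v).
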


\begin{proof}
By (ii), the whole map is determined by its radial function $\sigma$:
\[
  \Phi(\rho e^{i\theta})=\sigma(\rho)e^{i\theta}.
\]
By (iii), $\sigma$ is the restriction of a real M\"obius transformation fixing $0$, so
\[
  \sigma(\rho)=\frac{a\rho+b}{c\rho+d}, \qquad a,b,c,d\in\mathbb{R},\quad ad-bc\ne 0.
\]
By (iii) it is defined and finite at $\rho=0$, which forces $d\ne0$; then $\sigma(0)=b/d=0$ gives $b=0$. Dividing through by $d$ and renaming $a/d,\,c/d$ as $a,\,\beta$,
\[
  \sigma(\rho)=\frac{a\rho}{1+\beta\rho}.
\]
Here $a\ne0$. Because (iii) requires $\sigma$ to be defined on all of $[0,\infty)$, the denominator $1+\beta\rho$ cannot vanish for any $\rho\ge 0$; this forces $\beta\ge 0$, since $\beta<0$ would place the pole $\rho=-1/\beta$ inside $[0,\infty)$. Strict monotonicity gives
\[
  \sigma'(\rho)=\frac{a}{(1+\beta\rho)^2}>0,
\]
so $a>0$. The finite positive limiting radius in (iv) forces $\beta>0$: if $\beta=0$ then $\sigma(\rho)=a\rho\to\infty$ as $\rho\to\infty$, contradicting (iv). Hence
\[
  R=\lim_{\rho\to\infty}\frac{a\rho}{1+\beta\rho}=\frac{a}{\beta},
\]
so $a=\beta R$. The normalization (v) gives
\[
  1=\lim_{\rho\to0^+}\frac{\sigma(\rho)}{\rho}=a.
\]
Thus $a=1$ and $\beta=1/R$, hence
\[
  \sigma(\rho)=\frac{\rho}{1+\rho/R}=\frac{R\rho}{R+\rho}.
\]
Substituting into the ray-preserving representation in (ii) gives
\[
  \Phi(\rho e^{i\theta})=\frac{\rho}{1+\rho/R}e^{i\theta}=f_R(\rho e^{i\theta}),
\]
so $\Phi=f_R$.
\end{proof}

\begin{remark}[Why the normalizations are necessary]
If condition (ii) is weakened to rotation equivariance alone, a fixed phase factor is possible, and more generally a radius-dependent twist is possible: maps of the form
\[
  \Phi(\rho e^{i\theta})=\sigma(\rho)e^{i(\theta+\alpha(\rho))}
\]
still commute with rotations. Ray preservation, or equivalently rotation equivariance together with preservation of the positive real ray, removes this angular freedom. If condition (v) is omitted, the remaining conditions determine only the one-parameter family
\[
  \sigma_\beta(\rho)=\frac{\beta R\rho}{1+\beta\rho},\qquad \beta>0,
\]
all of which have limiting radius $R$. The first-order normalization at the origin fixes this remaining radial scale.
\end{remark}

\begin{remark}[The higher-dimensional version]
The theorem holds verbatim in $\mathbb{R}^n$ for the map of Section~\ref{sec:higher-d}, with the same proof. One replaces rotation equivariance by $\mathrm{O}(n)$-equivariance and reads condition (ii) as ray preservation in the form $\Phi(\rho\,\mathbf{u})=\sigma(\rho)\,\mathbf{u}$ for every unit vector $\mathbf{u}\in S^{n-1}$ and $\rho\ge 0$; conditions (i), (iii)--(v) are stated in terms of the radial action $\sigma$ exactly as before. Since the entire argument reduces to determining the scalar M\"obius function $\sigma$ on $[0,\infty)$, the dimension plays no role, and the conclusion is $\Phi=f_R$ on $\mathbb{R}^n$.
\end{remark}

\section{Summary}\label{sec:summary}

The cone construction gives the radial homeomorphism
\[
  f_R(z)=\frac{z}{1+|z|/R},
\]
independently of the cone height. Its basic algebra is the lens-equation identity
\[
  \frac{1}{|f_R(z)|}=\frac{1}{|z|}+\frac{1}{R},
\]
which makes the family closed under composition with curvature addition $1/R_3=1/R_1+1/R_2$. Including the inverse maps on their natural domains gives the one-parameter partial group $\phi_\kappa(z)=z/(1+\kappa|z|)$; for fixed $R$, the path $f_R^t=\phi_{t/R}$ is the flow of $\dot z=-|z|z/R$.

Projectively, the restriction of $f_R$ to any ray is a one-dimensional M\"obius transformation: cross-ratios on rays are preserved, and the forward and backward images of a point form a harmonic range with the point and the origin. The main Euclidean result is the Self-Directrix Theorem: every line $\ell$ not through $O$ maps to the open arc, between the two latus-rectum endpoints, of the conic with focus $O$, directrix $\ell$, eccentricity $R/d$, and semi-latus rectum $R$. Thus the single distance parameter $d=\operatorname{dist}(O,\ell)$ determines the ellipse/parabola/hyperbola trichotomy. The Confocal--Codirectrix Theorem extends this from lines to all focal polar loci: $f_R$ fixes the focus and directrix of each such locus and only lowers its eccentricity, by $1/e\mapsto1/e+\delta/R$, with the Self-Directrix Theorem recovered as the limiting line member of each fixed-directrix pencil; its parameter laws also re-emerge chord by chord from Askwith's classical focal-chord harmonic-mean theorem (Section~\ref{sec:focal-chord}). The image of a circle, by contrast, is in general a circular quartic rather than a conic: circles centered at $O$ remain circles, a circle through $O$ maps to the rational inverse of one branch of a conchoid of Nicomedes, and every other circle to a smooth closed curve on a circular quartic of geometric genus~$1$ (Section~\ref{sec:circle-image}); Table~\ref{tab:curve-images} collects these images.

The later results show that the same formula extends to $\mathbb{R}^n$, realizes the plane as a flat open disk of finite diameter and area through the pullback length metric, and is characterized among ray-preserving maps by M\"obius radial action, bounded image, and first-order normalization at the origin. The further constructions and interpretations in Section~\ref{sec:alt-constructions} all reflect the same reciprocal-radius law.

\end{document}